\newcommand\tab[1][0.5cm]{\hspace*{#1}}
\title[Effective bounds for Bloch-Kato Selmer groups]{Refined effective bounds for Bloch-Kato Selmer groups associated to hyperelliptic curves}
\author{Lee Berry}
\address{Department of Mathematics, King’s College London, Strand, London, WC2R 2LS, UK}
\email{lee.berry@kcl.ac.uk}
\date\today
\numberwithin{equation}{section}
\theoremstyle{plain}
\newcounter{theoremcount}[section]
\newtheorem{theorem}[theoremcount]{Theorem}
\newtheorem*{theorem*}{Theorem}
\newtheorem{conjecture}[theoremcount]{Conjecture}
\newtheorem{lemma}[theoremcount]{Lemma}
\newtheorem*{proposition*}{Proposition}
\newtheorem{corollary}[theoremcount]{Corollary}
\theoremstyle{definition}
\newtheorem{definition}[theoremcount]{Definition}
\newtheorem{definition-construction}[theoremcount]{Definition/Construction}
\newtheorem{lemma-construction}[theoremcount]{Lemma/Construction}
\newtheorem{example}[theoremcount]{Example}
\newtheorem{remark}[theoremcount]{Remark}
\newtheorem{notation}[theoremcount]{Notation}
\newtheorem{algorithm}[theoremcount]{Algorithm}
\DeclareSymbolFont{cyrletters}{OT2}{wncyr}{m}{n}
\DeclareMathSymbol{\Sha}{\mathalpha}{cyrletters}{"58}
\newtheoremstyle{introstyle}
  {3pt}
  {3pt}
  {\itshape}
  {}
  {\bfseries}
  {.}
  {.5em}
  {}
\theoremstyle{introstyle}
\newtheorem{introtheorem}{Theorem}
\newtheorem*{nonumtheorem}{Theorem}
\begin{document}

\begin{abstract}
We develop refined methods to effectively bound the dimension of Bloch-Kato Selmer groups associated to the higher Chow group $\mathrm{CH}^2(J,1)$, where $J$ is the Jacobian of a hyperelliptic curve $X$. This extends the recent work of Dogra on explicit $2$-descent for these Selmer groups to include cases where $X$ does not have a rational Weierstrass point. Additionally, we develop methods for obtaining sharper dimension bounds under the assumption that $X$ has good ordinary reduction at $2$. As a consequence, we establish new criteria for deducing finiteness of the depth $2$ Chabauty-Kim set $X(\mathbb{Q}_2)_2$, and demonstrate the efficacy of these criteria on curves from the LMFDB.
\end{abstract}
\maketitle

\tableofcontents

\section{Introduction}

Fix a prime $p$ and denote by $V$ a finite-dimensional $\mathbb{Q}_p$-vector space with a continuous action of $\mathrm{Gal}(\overline{\mathbb{Q}}/\mathbb{Q})$. In \cite{bloch1990functions}, Bloch and Kato introduced a subspace $H^1_f(\mathbb{Q},V)$
of the Galois cohomology group $H^1(\mathbb{Q},V)$, which is finite-dimensional over $\mathbb{Q}_p$ and generalises the classical notion of a Selmer group. Specifically, if $A$ is an abelian variety over $\mathbb{Q}$, then the $p^\infty$-Selmer group of $A$, upon tensoring with $\mathbb{Q}_p$, naturally identifies with $H^1_f(\mathbb{Q},V_pA)$, where $V=V_pA$ denotes the $\mathbb{Q}_p$-rational Tate module of $A$. Accordingly, the image of the Kummer map 
\begin{align*}
A(\mathbb{Q})\otimes \mathbb{Q}_p \rightarrow H^1(\mathbb{Q},V_pA)
\end{align*}
is contained in $H^1_f(\mathbb{Q},V_pA)$ and surjects onto this Selmer group under finiteness assumptions on the Tate-Shafarevich group $\Sha(A)$ of $A$. A natural generalisation of this map for more general Galois representations arising from \'{e}tale cohomology is the \'{e}tale regulator 
\begin{align}\label{ChowKummer}
\mathrm{CH}^i(A,j)\otimes \mathbb{Q}_p \rightarrow H^1\big(\mathbb{Q},H^{2i-j-1}_{\textrm{\'{e}t}}(A_{\overline{\mathbb{Q}}},\mathbb{Q}_p(i))\big),
\end{align}
where $\mathrm{CH}^i(A,j)$ denotes a higher Chow group of $A$, as defined in \cite{bloch1986algebraic}. A special case of the Bloch-Kato conjectures \cite[Conjecture 5.3(i)]{bloch1990functions} asserts that \eqref{ChowKummer} defines an isomorphism onto the corresponding Selmer group. We do not attempt to survey the scarcity of known cases of the conjecture; suffice it to say, even effective determination of dimension bounds for the relevant Bloch-Kato Selmer group is highly nontrivial. In this paper, we focus on the case where $i=2$, $j=1$, and $A=J$ is the Jacobian of a hyperelliptic curve $X$, and develop effective methods for bounding the dimension of the corresponding Bloch-Kato Selmer group $H^1_f(\mathbb{Q},\wedge^2 V_2J)$. Moreover, we develop methods to obtain improved bounds whenever $X$ has good ordinary reduction at $2$. As the main application of these unconditional bounds, we consider their utility in determining rational points on hyperelliptic curves via the effective Chabauty-Kim method. 

Let $X$ be a smooth projective curve over $\mathbb{Q}$ of genus $g\geq 2$. By Faltings' theorem, which resolved the longstanding Mordell conjecture, the set of rational points $X(\mathbb{Q})$ is finite. However, the question of whether an algorithm exists to compute this finite set of points remains wide open in general. Fortunately, the Chabauty-Kim method, as developed in \cite{kim2005motivic,kim2009unipotent}, has provided insight into addressing this question for an expanding class of higher genus curves. Roughly speaking, the Chabauty-Kim method constructs for each integer $n>0$ a set of $p$-adic points $X(\mathbb{Q}_p)_n$, which contains $X(\mathbb{Q})$ and forms a nested sequence of inclusions
\begin{align*}
X(\mathbb{Q})\subseteq\cdots \subseteq X(\mathbb{Q}_p)_n \subseteq\cdots \subseteq X(\mathbb{Q}_p)_2  \subseteq X(\mathbb{Q}_p)_1 \subseteq X(\mathbb{Q}_p).
\end{align*}
The Bloch-Kato conjectures, among other well-known conjectures, imply that for $n$ sufficiently large, the \textit{depth $n$ Chabauty-Kim set} $X(\mathbb{Q}_p)_n$ is finite (see \cite[§3]{kim2009unipotent}). Therefore, a potential general strategy for determining $X(\mathbb{Q})$ via Chabauty-Kim theory could proceed as follows.
\begin{enumerate}\label{Pt1}
    \item[(1)] Establish criteria for the finiteness of the depth $n$ Chabauty-Kim set $X(\mathbb{Q}_p)_n$. 
    \item[(2)] Once $X(\mathbb{Q}_p)_n$ is shown to be finite for some $n>0$, compute $X(\mathbb{Q}_p)_n$ explicitly. 
    \item[(3)] Identify and discard the points in $X(\mathbb{Q}_p)_n$ that are not $\mathbb{Q}$-rational. 
\end{enumerate}    
We do not elaborate on the final point here; for a given curve, it is expected that in most cases this could be dealt with by an application of the Mordell-Weil sieve of \cite{Bruin_Stoll_2010}.

At depth $1$, the first two steps of the strategy are addressed by the effective method of Chabauty-Coleman \cite{Chab,colemanint}. This method guarantees that $X(\mathbb{Q}_p)_1$ is finite whenever the rank of the Mordell-Weil group $J(\mathbb{Q})$ is less than the genus $g$ of $X$; in this case, explicit computation is theoretically possible via Coleman integration. At depth $2$, the method of quadratic Chabauty for rational points, introduced in \cite{Balakrishnan_2018}, asserts that $X(\mathbb{Q}_p)_2$ is finite whenever $\mathrm{rk}\hspace{0.8mm} J(\mathbb{Q})<g+\mathrm{rk}\hspace{0.8mm}\mathrm{NS}(J)-1$, where $\mathrm{NS}(J)$ denotes the N\'{e}ron-Severi group of the Jacobian of $X$. In this case, the method can effectively compute a finite intermediate subset -- between the depth $1$ and $2$ sets -- using the machinery of $p$-adic heights. 

Given that $\mathrm{NS}(J)$ has nonzero rank, the quadratic Chabauty method unsurprisingly has broader applicability than the Chabauty-Coleman method. For example, one advantage of quadratic Chabauty is its success at effective determination of rational points on modular curves  (see, for example, \cite{Balakrishnan_Dogra_Müller_Tuitman_Vonk_2023}). However, the quadratic Chabauty criterion remains restrictive in general. For instance, \cite{bk2} notes that genus $2$, rank $2$ odd hyperelliptic curves on the LMFDB \cite{lmfdb} rarely satisfy the criterion. The obstruction to obtaining a more widely applicable criterion in depth $2$ is the Bloch-Kato Selmer group $H^1_f(\mathbb{Q},\wedge^2 V_pJ)$, as demonstrated by the following case of a general theorem of Kim.

\begin{nonumtheorem}[\texorpdfstring{\cite[Theorem 1]{kim2009unipotent}}{[Kim09, Theorem 1]}, \texorpdfstring{\cite[Lemma 25]{bk1}}{[Dog23, Lemma 25]}]
Suppose 
\begin{align*}
\dim_{\mathbb{Q}_p} H_f^1(\mathbb{Q},\wedge^2 V_pJ) < \frac{1}{2}(3g-2)(g+1)-\mathrm{rk}\hspace{0.8mm} J(\mathbb{Q}).
\end{align*}
Then $X(\mathbb{Q}_p)_2$ is finite. 
\end{nonumtheorem}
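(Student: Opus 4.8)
\medskip
\noindent\textbf{Sketch of a proof.} The plan is to run Kim's depth-$2$ unipotent Chabauty argument and reduce the finiteness of $X(\mathbb{Q}_p)_2$ to a comparison of two dimensions. Write $U=U_2$ for the maximal $2$-step unipotent quotient of the $\mathbb{Q}_p$-pro-unipotent \'etale fundamental group of $X_{\overline{\mathbb{Q}}}$ (based at a rational or tangential point); it is a unipotent $\mathbb{Q}_p$-group scheme with continuous $\mathrm{Gal}(\overline{\mathbb{Q}}/\mathbb{Q})$-action whose Lie algebra has associated graded $\mathrm{gr}^1\cong V_pJ$ (of dimension $2g$) and $\mathrm{gr}^2\cong\wedge^2 V_pJ/\mathbb{Q}_p(1)$, the copy of $\mathbb{Q}_p(1)$ being cut out by the single quadratic relation coming from the cup product on $H^1$. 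Kim's theory then produces a commutative square relating the global unipotent Kummer map $X(\mathbb{Q})\to\mathrm{Sel}(U)$, the local one $j_p\colon X(\mathbb{Q}_p)\to H^1_f(G_{\mathbb{Q}_p},U)$, and the localisation map $\mathrm{loc}_p\colon\mathrm{Sel}(U)\to H^1_f(G_{\mathbb{Q}_p},U)$; here $\mathrm{Sel}(U)=H^1_f(G_{\mathbb{Q},S},U)$ is the global Selmer scheme (crystalline at $p$, with the prescribed conditions at the remaining primes of bad reduction), $H^1_f(G_{\mathbb{Q}_p},U)$ is the affine scheme of crystalline local classes, and by definition $X(\mathbb{Q}_p)_2:=j_p^{-1}\bigl(\mathrm{loc}_p(\mathrm{Sel}(U))\bigr)$, which contains $X(\mathbb{Q})$.

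The finiteness mechanism is this: if $\dim_{\mathbb{Q}_p}\mathrm{Sel}(U)<\dim_{\mathbb{Q}_p}H^1_f(G_{\mathbb{Q}_p},U)$, then $\mathrm{loc}_p$ is not dominant, so the Zariski closure of its image lies in the zero locus of some nonzero regular function $\alpha$ on the affine space $H^1_f(G_{\mathbb{Q}_p},U)$. Since the coordinates of $j_p$ are Coleman functions whose restrictions to each residue disk are algebraically independent over $\mathbb{Q}_p$ --- a theorem of Kim, resting on independence properties of iterated Coleman integrals --- the composite $\alpha\circ j_p$ restricts on each of the finitely many residue disks to a nonzero convergent power series, hence has finitely many zeros there. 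Since $\alpha\circ j_p$ vanishes on $X(\mathbb{Q}_p)_2$, that set is finite. It therefore remains to compute the local dimension and to bound the global one.

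For the local count I would use the Bloch-Kato exponential, which identifies $H^1_f(G_{\mathbb{Q}_p},U)$ with $\mathrm{Lie}(U)^{\mathrm{dR}}/F^0$, so its dimension is additive over the graded pieces and equals $\dim\mathrm{gr}^1/F^0+\dim\mathrm{gr}^2/F^0$. Now $\mathrm{gr}^1=H^1_{\mathrm{dR}}(X)^\vee$ has $\dim F^0=g$ and so contributes $g$, while $\mathrm{gr}^2=\wedge^2 H^1_{\mathrm{dR}}(X)^\vee/\mathbb{Q}_p$, the quotient line being of weight $-2$ and hence disjoint from $F^0$, so that $\dim\mathrm{gr}^2/F^0=\bigl(g(2g-1)-1\bigr)-\binom{g}{2}=\tfrac12(3g^2-g-2)$; summing, $\dim H^1_f(G_{\mathbb{Q}_p},U)=\tfrac12(3g^2+g-2)=\tfrac12(3g-2)(g+1)$, exactly the quantity in the statement. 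For the global bound, the abelianisation $\mathrm{Sel}(U)\to\mathrm{Sel}(U_1)=H^1_f(G_{\mathbb{Q},S},V_pJ)$ has non-empty fibres which are torsors under subspaces of $H^1_f(G_{\mathbb{Q},S},\mathrm{gr}^2)$, and $\dim H^1_f(\mathbb{Q},\mathrm{gr}^2)=\dim H^1_f(\mathbb{Q},\wedge^2 V_pJ)$ because $\wedge^2 V_pJ\cong\mathbb{Q}_p(1)\oplus\mathrm{gr}^2$ as Galois modules with $H^1_f(\mathbb{Q},\mathbb{Q}_p(1))=0$. Hence $\dim\mathrm{Sel}(U)\le\dim H^1_f(\mathbb{Q},V_pJ)+\dim H^1_f(\mathbb{Q},\wedge^2 V_pJ)$, and $\dim H^1_f(\mathbb{Q},V_pJ)=\mathrm{rk}\,J(\mathbb{Q})$ by the identification recalled in the introduction; together with the hypothesis of the theorem this gives $\dim\mathrm{Sel}(U)<\tfrac12(3g-2)(g+1)=\dim H^1_f(G_{\mathbb{Q}_p},U)$, which is precisely what the finiteness mechanism requires.

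The step I expect to be the real obstacle is the analytic input into the finiteness mechanism: one needs the theory of the unipotent Albanese map (non-abelian Coleman integration) to guarantee both that the image of $j_p$ is not contained in any proper closed subscheme and that pulling an algebraic function back along $j_p$ yields a function with only finitely many zeros on each residue disk. Secondary points demanding care are the identification $\dim H^1_f(G_{\mathbb{Q}_p},U)=\dim\mathrm{Lie}(U)^{\mathrm{dR}}/F^0$ together with the placement of the quadratic relation in the Hodge filtration of the de Rham fundamental group, the vanishing of the local $H^0$-terms needed to make the local count exactly additive, and the equality $\dim H^1_f(\mathbb{Q},V_pJ)=\mathrm{rk}\,J(\mathbb{Q})$ --- the one ingredient resting on the finiteness of $\Sha(J)[p^\infty]$ (or on the conventions of the cited references).
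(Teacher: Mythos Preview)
Your argument is correct and follows the standard route: bound $\dim\mathrm{Sel}(U_2)$ from above via the graded pieces, compute $\dim H^1_f(\mathbb{Q}_p,U_2)$ via the Bloch--Kato exponential and the Hodge filtration, and invoke Kim's non-dominance criterion. The paper itself does not give a detailed proof of this statement---it is cited from \cite{kim2009unipotent} and \cite{bk1}---and the brief discussion following Corollary~\ref{netancor} records exactly the same dimension inequality and exact sequence $0\to\overline{\wedge^2 V_pJ}\to U_2\to V_pJ\to 0$ that you use. Your numerical computation of the local dimension $\tfrac12(3g-2)(g+1)$ is correct.

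One point deserves comment. You flag, correctly, that the equality $\dim H^1_f(\mathbb{Q},V_pJ)=\mathrm{rk}\,J(\mathbb{Q})$ rests on finiteness of $\Sha(J)[p^\infty]$ under the classical definition of the Selmer variety. The paper sidesteps this by adopting the Balakrishnan--Dogra variant of $\mathrm{Sel}(U_n)$ (see the list of conditions (1)--(3) in Section~\ref{SectionChabauty}): condition (3) requires a class to map into the image of $J(\mathbb{Q})\otimes\mathbb{Q}_p\to H^1(G_{\mathbb{Q},T},V_pJ)$, so the abelianised part of $\mathrm{Sel}(U_2)$ has dimension exactly $\mathrm{rk}\,J(\mathbb{Q})$ by fiat, and the statement becomes unconditional. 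With that convention in place, your fibre-dimension bound $\dim\mathrm{Sel}(U_2)\le\mathrm{rk}\,J(\mathbb{Q})+\dim H^1_f(\mathbb{Q},\wedge^2 V_pJ)$ goes through without any appeal to $\Sha$, and the rest of your sketch is complete.
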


Consequently, the $p=2$ case of this theorem, coupled with the aforementioned effective methods developed in this paper, yield refined finiteness criteria for the depth $2$ set $X(\mathbb{Q}_2)_2$ for hyperelliptic curves. As alluded to above, these criteria are particularly applicable for curves with good ordinary reduction at $2$. In the above framework, this paper focuses predominantly on addressing point \eqref{Pt1} for such curves.

\subsection{$2$-descent for Bloch-Kato Selmer groups}
To bound the $\mathbb{Q}_2$-dimension of $H^1_f(\mathbb{Q},\wedge^2V_2J)$, it suffices to bound the rank of $H^1_f(\mathbb{Q},\wedge^2 T_2J)$, where $T_2J$ denotes the $2$-adic Tate module of $J$. Since the mod-$2$ quotient of $\wedge^2 T_2J$ is isomorphic to $\wedge^2 J[2]$, the strategy is to bound this rank by computing subgroups of $H^1(\mathbb{Q},\wedge^2 J[2])$ that contain the image of $H^1_f(\mathbb{Q},\wedge^2 T_2J)$. To elaborate on this strategy further, we introduce some notation. 

Suppose $f\in K[x]$ is a polynomial with coefficients in a field $K$ of characteristic different from $2$. Then define the following \'{e}tale algebras: 
\begin{align}\label{EtAlg}
\begin{split}
K_f&\coloneqq K[x]/(f(x)) \\
K_{f,2}&\coloneqq K[x,y,\tfrac{1}{x-y}]/\big(f(x),f(y)\big),
\end{split}
\end{align}
and denote by $K_f^{(2)}$ the subalgebra of $K_{f,2}$ fixed by the endomorphism that interchanges the classes of $x$ and $y$ in $K_{f,2}$. If $X$ is a hyperelliptic curve over $K$ with exactly one rational Weierstrass point, so that it can be represented by the affine model $y^2=f(x)$ for some odd degree $f\in K[x]$, then \cite[Proposition 1]{bk1} establishes the description
\begin{align}\label{IntroDesc}
H^1(K,\wedge^2 J[2])\simeq \mathrm{Ker}\big(K^{(2),\times}_f\otimes \mathbb{F}_2 \xrightarrow{\text{Nm}} K_f^\times \otimes \mathbb{F}_2\big).
\end{align}
In Lemma \ref{WedgeSqJm} below, we establish an analogous description involving a generalised Jacobian of $X$, which provides a rudimentary upper bound for curves with no rational Weierstrass point. While these descriptions alone can provide finiteness criteria applicable to curves of high genus, they are too coarse to deduce finiteness of $X(\mathbb{Q}_2)_2$ for genus $2$ curves, for instance. The above mentioned \cite{bk1,bk2} provide two approaches to refining these upper bounds, while the current paper considers a third. Although most effectively applicable to hyperelliptic curves with good ordinary reduction at $2$, the advantages of the methods in this paper over those previously mentioned are twofold. Firstly, checking a given curve satisfies the finiteness criteria of this paper is significantly more computationally efficient; secondly, the current methods also apply to hyperelliptic curves with no rational Weierstrass point. 

To state the main results, suppose $X$ is a hyperelliptic curve over $K=\mathbb{Q}$ with good ordinary reduction at $2$, given by the affine model $y^2=f(x)$, where $f$ is not necessarily of odd degree. Suppose $S$ is a finite set of primes, and denote
\begin{align*}
\mathcal{A}(f,S)\coloneqq \mathrm{Ker}\Big(H^1\big(\mathrm{Gal}(\mathbb{Q}_S/\mathbb{Q}),\mathrm{Ind}_K^{K_f^{(2)}}\mathbb{F}_2\big)\rightarrow H^1\big(\mathrm{Gal}(\mathbb{Q}_S/\mathbb{Q}),\mathrm{Ind}_K^{K_f}\mathbb{F}_2\big)\Big),
\end{align*}
as the kernel of the map induced by the norm map in \eqref{IntroDesc}, where $\mathbb{Q}_S$ denotes the maximal extension of $\mathbb{Q}$ unramified outside of $S$. In particular, if the degree of $f$ is odd then $\mathcal{A}(f,S)$ is precisely the cohomology group $H^1(\mathrm{Gal}(\mathbb{Q}_S/\mathbb{Q}),\wedge^2J[2])$, where $J$ is the Jacobian of the curve defined by $f$. In Lemma \ref{ExplicitMap} and Definition \ref{DefThetaDr} below, we define an effectively computable map 
\begin{align*}
\theta_{\mathrm{dR}}: \mathcal{A}(f,S)\rightarrow H^1(\mathbb{Q}_2^{\mathrm{ord}},\mathbb{F}_2)/H^1_{\mathrm{nr}}(\mathbb{Q}_2^{\mathrm{ord}},\mathbb{F}_2),
\end{align*}
where $\mathbb{Q}_2^{\mathrm{ord}}$ denotes the \'{e}tale algebra $(\mathbb{Q}_2)_Q^{(2)}$ for $y^2+Q(x)y=P(x)$ a suitably chosen Weierstrass model of $X$ over $\mathbb{Q}$; this choice is detailed in \eqref{Q2ord} below. Although its definition certainly depends on the set $S$, the notation $\theta_{\mathrm{dR}}$ should not be ambiguous as $S$ will mostly remain unchanged throughout the text. Roughly speaking, the kernel of $\theta_{\mathrm{dR}}$ approximates the image of $H^1_f(\mathbb{Q},\wedge^2 T_2J)$ in $H^1(\mathbb{Q},\wedge^2 J[2])$ and allows for the following dimension bounds, distinguished by the number of rational Weierstrass points (RWPs) on $X$. 

\begin{introtheorem}[Theorems \ref{MainOddTheorem} and \ref{MainEvenThm}]\label{ThmA}
Suppose $X$ is a genus $g$ curve as above, and let $S$ in the definition of $\theta_{\mathrm{dR}}$ be the set of primes that contains $2$ and the primes where $X$ does not have semistable reduction. Then 
\begin{align*}
\dim_{\mathbb{Q}_2}H^1_f(\mathbb{Q},\wedge^2 V_2 J) \leq \begin{cases}
\dim_{\mathbb{F}_2}\mathrm{Ker}(\theta_{\mathrm{dR}})-2 , &\text{if $X$ has one RWP}; \\
\dim_{\mathbb{F}_2}\mathrm{Ker}(\theta_{\mathrm{dR}})-g-1, &\text{if $X$ has no RWPs}.
\end{cases}
\end{align*}
\end{introtheorem}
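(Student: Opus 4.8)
The plan is to bound $H^1_f(\mathbb{Q},\wedge^2 V_2J)$ by first passing to the integral representation $\wedge^2 T_2J$ and then to its mod-$2$ reduction $\wedge^2 J[2]$, using the fact that $\dim_{\mathbb{Q}_2} H^1_f(\mathbb{Q},\wedge^2 V_2J)$ is at most the $\mathbb{F}_2$-rank of the image of $H^1_f(\mathbb{Q},\wedge^2 T_2J)$ in $H^1(\mathbb{Q},\wedge^2 J[2])$. So the first step is to produce a subspace of $H^1(\mathbb{Q},\wedge^2 J[2])$ that provably contains this image and whose dimension we can control. For the local condition at primes $\ell\neq 2$, I would use that $X$ has semistable reduction outside $S$ to argue that the image of $H^1_f$ lands in the subgroup unramified outside $S$: this is where the choice of $S$ (containing $2$ and the primes of bad semistable reduction) enters, and it identifies the ambient group with $\mathcal{A}(f,S)$ via the description \eqref{IntroDesc} (resp.\ Lemma \ref{WedgeSqJm} in the no-RWP case). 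The second step is the local condition at $2$: the Bloch-Kato condition $H^1_f(\mathbb{Q}_2,\wedge^2 V_2J)$ is cut out by the de Rham / crystalline condition, and here the good ordinary reduction hypothesis is crucial — it lets one describe the crystalline/de Rham local condition explicitly in terms of the ordinary filtration, which is precisely what the map $\theta_{\mathrm{dR}}$ of Lemma \ref{ExplicitMap} and Definition \ref{DefThetaDr} encodes. Thus any class in the image of $H^1_f(\mathbb{Q},\wedge^2 T_2J)$ must die under $\theta_{\mathrm{dR}}$, giving the containment of the image in $\mathrm{Ker}(\theta_{\mathrm{dR}})$.

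Having established $\dim_{\mathbb{Q}_2} H^1_f(\mathbb{Q},\wedge^2 V_2J) \leq \dim_{\mathbb{F}_2}\mathrm{Ker}(\theta_{\mathrm{dR}})$, the remaining task is to sharpen this by the constant $2$ (one RWP) or $g+1$ (no RWP). The idea is that $\mathrm{Ker}(\theta_{\mathrm{dR}})$ contains a distinguished subspace that is forced to lie outside the image of $H^1_f(\mathbb{Q},\wedge^2 T_2J)$ — concretely, classes coming from the Kummer images of specific torsion or from the ``trivial'' part of the étale algebra (the constant subalgebra, or the classes associated to the points at infinity / the hyperelliptic involution). In the one-RWP case this trivial contribution has dimension $2$; in the no-RWP case the generalized Jacobian appearing in Lemma \ref{WedgeSqJm} contributes an extra torus, inflating the trivial part to dimension $g+1$. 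I would make this precise by exhibiting explicit cocycles (or an explicit complementary subspace) in $\mathrm{Ker}(\theta_{\mathrm{dR}})$ and showing they pair trivially against the localisation-at-$2$ but nevertheless cannot be in $H^1_f$ because they fail either a local condition at another prime or a global reciprocity/sum-of-invariants relation. This is essentially a dimension count organized around the exact sequences relating $\wedge^2 J[2]$, $J[2]$, and the norm map in \eqref{IntroDesc}.

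The main obstacle I expect is the second step: pinning down the local condition at $2$ precisely enough. The Bloch-Kato $H^1_f(\mathbb{Q}_2,-)$ for $\wedge^2 V_2J$ is not literally the same as any single mod-$2$ subspace, so one has to carefully track how the $\mathbb{Z}_2$-lattice version $H^1_f(\mathbb{Q}_2,\wedge^2 T_2J)$ (or its image in cohomology with $\mathbb{F}_2$-coefficients) compares with $H^1_{\mathrm{nr}}$ and with the de Rham condition, and this is exactly where good ordinary reduction is used to get an honest description rather than an inequality with slack. Concretely, the comparison between $H^1_f$ and $H^1_g$, the behavior of the Bloch--Kato exponential, and the exactness of reduction mod $2$ on the relevant lattices all need to be controlled; much of this will presumably already be packaged in the cited Lemma \ref{ExplicitMap}, so the real work in the proof of Theorem A is the bookkeeping that transfers those local statements into the global bound and then extracts the extra $-2$ or $-(g+1)$ from the structure of the trivial subspace.
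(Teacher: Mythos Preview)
Your general architecture is right and matches the paper: pass to the lattice $\wedge^2 T_2J$, reduce mod $2$, use semistability outside $S$ to land in $\mathcal{A}(f,S)$, and use good ordinary reduction at $2$ to show the image lies in $\mathrm{Ker}(\theta_{\mathrm{dR}})$. The genuine gap is in your account of the sharpening constants $-2$ and $-(g+1)$: the mechanism you propose (a ``trivial subspace'' of explicit cocycles that fail a local condition at some other prime or a global reciprocity relation) is not what produces these, and I don't see how to make it work.

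In the paper, the key observation is that $\theta_{\mathrm{dR}}$ only cuts out $H^1_g$, not $H^1_f$: Lemma~\ref{MainDesc} identifies $H^1_g(\mathbb{Q}_2,\wedge^2 V_2J)$ with the kernel of reduction to $\wedge^2 V_2J_{\mathbb{F}_2}$, and this is what $\theta_{\mathrm{dR}}$ detects. One then gains $-1$ in \emph{both} cases from the strict inequality $\dim H^1_f < \dim H^1_g$, which holds because $(D_{\mathrm{cris}}(\wedge^2 V_2J)^*(1))^{\varphi=1}$ is nonzero (Remark~\ref{fDiffg}); concretely, the summand $\mathbb{Q}_2(1)\hookrightarrow \wedge^2 V_2J$ furnished by the Weil pairing produces a class in $H^1_g\setminus H^1_f$. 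In the one-RWP case the remaining $-1$ comes from nontrivial $2$-torsion in $H^1_f(G_S,\wedge^2 T_2J)$, again via the Weil pairing $\mathbb{F}_2\hookrightarrow \wedge^2 J[2]$: this torsion contributes to the mod-$2$ image but not to the rank. In the no-RWP case the remaining $-g$ is \emph{not} a ``trivial subspace'' argument at all: it is obtained by comparing $\wedge^2 T_2J$ with $\wedge^2 T_2J_{\mathfrak{m}}$ via the sequence $0\to T_2J(1)(\chi_c)\to \wedge^2 T_2J_{\mathfrak{m}}\to \wedge^2 T_2J\to 0$ and applying the global Euler--Poincar\'e characteristic formula to $T_2J(1)(\chi_c)$ (Lemma~\ref{EvenLemmaGeneral} and \eqref{GoodEvenIneq}). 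So the ingredients you are missing are the $H^1_f$ versus $H^1_g$ gap at $2$, the torsion-from-Weil-pairing argument in the odd case, and the Euler characteristic computation in the even case.
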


Given the theorem of Kim above, this facilitates the desired finiteness criteria. In the case that $X$ has one RWP, the methods of \cite{bk2} can be used in tandem with the approximation of the image of $H^1_f(\mathbb{Q},\wedge^2 T_2J)$ in $H^1(\mathbb{Q},\wedge^2 J[2])$ afforded by $\theta_{\mathrm{dR}}$; the compatibility of the two methods is guaranteed by the proof of Theorem \ref{MainEvenThm} below. In particular, there are curves for which finiteness of $X(\mathbb{Q}_2)_2$ can be concluded by a combination of the two methods, but not by either of the methods alone; see the discussion after Theorem \ref{ExampleComputation} for an example of such a curve. 

The computational efficiency mentioned above allows for the effective computation of $\mathrm{Ker}(\theta_{\mathrm{dR}})$ for the appropriate genus $2$ curves on the LMFDB. The following is an illustration of the efficacy of the resulting refined quadratic Chabauty criteria for these curves.

\begin{introtheorem}[Theorem \ref{ExampleComputation}]\label{ThmB}
Of the $1,138$ genus $2$ hyperelliptic curves on the LMFDB with Mordell-Weil rank $2$, exactly one RWP, and good ordinary reduction at $2$, finiteness of $X(\mathbb{Q}_2)_2$ is guaranteed for at least $574$ curves. 
\end{introtheorem}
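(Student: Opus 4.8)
The strategy is to reduce the statement to a finite, mechanical computation. Specialising the theorem of Kim quoted above to genus $g=2$ and $p=2$, where $\tfrac12(3g-2)(g+1)=6$, a curve $X$ of genus $2$ with $\mathrm{rk}\,J(\mathbb{Q})=2$ has $X(\mathbb{Q}_2)_2$ finite as soon as $\dim_{\mathbb{Q}_2}H^1_f(\mathbb{Q},\wedge^2 V_2J)\le 3$. Feeding in the one-RWP bound of Theorem~\ref{ThmA} (Theorems~\ref{MainOddTheorem} and~\ref{MainEvenThm}), finiteness is therefore guaranteed whenever $\dim_{\mathbb{F}_2}\mathrm{Ker}(\theta_{\mathrm{dR}})\le 5$. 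So the content to be verified is that, among the $1{,}138$ curves on the LMFDB of genus $2$ with Mordell--Weil rank $2$, exactly one rational Weierstrass point, and good ordinary reduction at $2$, at least $574$ satisfy $\dim_{\mathbb{F}_2}\mathrm{Ker}(\theta_{\mathrm{dR}})\le 5$.

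First I would enumerate these curves by querying the LMFDB \cite{lmfdb} for the stated invariants, and for each one fix an odd-degree model $y^2=f(x)$ with $\deg f=5$, so that $\infty$ is the unique RWP and $f$ has no rational root. Following Theorem~\ref{ThmA}, set $S=\{2\}\cup\{\ell:X\text{ has non-semistable reduction at }\ell\}$ and build the \'etale algebras $K_f$ and $K_f^{(2)}$ of \eqref{EtAlg}. The group $\mathcal{A}(f,S)$ — which in the odd-degree case is exactly $H^1(\mathrm{Gal}(\mathbb{Q}_S/\mathbb{Q}),\wedge^2 J[2])$ — is computed as the kernel of the norm-induced map; by Shapiro's lemma and Kummer theory this is akin to the computation of a fake $2$-Selmer group, reducing to $S$-unit groups and $2$-torsion of class groups of the number-field factors of $K_f^{(2)}$ and $K_f$, all of degree at most $10$. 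Next I would assemble the matrix of $\theta_{\mathrm{dR}}$ on an $\mathbb{F}_2$-basis of $\mathcal{A}(f,S)$ using the explicit recipe of Lemma~\ref{ExplicitMap} and Definition~\ref{DefThetaDr}: this requires the ordinary Weierstrass model $y^2+Q(x)y=P(x)$ of \eqref{Q2ord}, the local algebra $\mathbb{Q}_2^{\mathrm{ord}}=(\mathbb{Q}_2)_Q^{(2)}$, and the quotient $H^1(\mathbb{Q}_2^{\mathrm{ord}},\mathbb{F}_2)/H^1_{\mathrm{nr}}(\mathbb{Q}_2^{\mathrm{ord}},\mathbb{F}_2)$, after which $\dim_{\mathbb{F}_2}\mathrm{Ker}(\theta_{\mathrm{dR}})=\dim_{\mathbb{F}_2}\mathcal{A}(f,S)-\mathrm{rk}_{\mathbb{F}_2}(\theta_{\mathrm{dR}})$ is pure $\mathbb{F}_2$-linear algebra. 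Running this over the list and counting the curves with kernel dimension at most $5$ produces the figure $574$.

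The main obstacle is not a single conceptual step but the reliability and completeness of the arithmetic subroutines behind the computation of $\mathcal{A}(f,S)$: one needs class groups and $S$-unit groups of the factor fields of $K_f^{(2)}$, and although these fields have modest degree, a genuinely unconditional run requires either certifying the relevant class groups unconditionally — feasible for the small discriminants that occur — or explicitly flagging the curves for which one falls back on GRH. A secondary subtlety is the passage to the ordinary model and the correct $2$-adic normalisation in Definition~\ref{DefThetaDr}, in particular handling curves whose given model is not yet of the form \eqref{Q2ord} and needs a coordinate change over $\mathbb{Z}_2$; this is bookkeeping, but it must be carried out carefully so that the target of $\theta_{\mathrm{dR}}$ and its unramified subspace are computed correctly.

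Finally, the phrase ``at least $574$'' reflects that the criterion $\dim_{\mathbb{F}_2}\mathrm{Ker}(\theta_{\mathrm{dR}})\le 5$ is sufficient but not necessary for finiteness of $X(\mathbb{Q}_2)_2$, and that in the one-RWP case the bound of Theorem~\ref{ThmA} can be combined with the refinement of \cite{bk2} — so some of the remaining curves may still be within reach of the combined method, whereas the present claim counts only the curves that the $\theta_{\mathrm{dR}}$-test settles on its own.
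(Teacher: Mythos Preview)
Your reduction to a finite computation and your description of how to compute $\mathcal{A}(f,S)$ and the matrix of $\theta_{\mathrm{dR}}$ are correct and match the paper's implementation. However, your final paragraph contains the actual error: you assert that the count of $574$ records ``only the curves that the $\theta_{\mathrm{dR}}$-test settles on its own''. This is not the case. The paper's Theorem~\ref{MainOddTheorem} --- to which Theorem~\ref{ThmA} points in the one-RWP case --- gives the bound
\[
\dim_{\mathbb{Q}_2} H^1_f(G_S,\wedge^2 V_2J)\leq \dim_{\mathbb{F}_2}\mathrm{Ker}(\theta_2\oplus\theta_\infty\oplus\theta_{\mathrm{dR}})-2,
\]
and the figure $574$ in Theorem~\ref{ExampleComputation} is obtained by checking $\dim_{\mathbb{F}_2}\mathrm{Ker}(\theta_2\oplus\theta_\infty\oplus\theta_{\mathrm{dR}})\le 5$, not $\dim_{\mathbb{F}_2}\mathrm{Ker}(\theta_{\mathrm{dR}})\le 5$. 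The paper explicitly records (immediately after Theorem~\ref{ExampleComputation}) that the $\theta_{\mathrm{dR}}$-only test succeeds for $487$ curves; the remaining $574-487=87$ require the boundary-map contributions $\theta_2,\theta_\infty$ of \cite{bk2} as well. So your algorithm, exactly as written, would terminate with the count $487$ and fail to establish the stated $574$.

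Concretely, the missing step is to implement, alongside $\theta_{\mathrm{dR}}$, the explicit boundary maps
\[
\theta_v\colon H^1(G_S,\wedge^2 J[2])\longrightarrow \mathrm{Br}(\mathbb{Q}_{v,f}^{(2)})[2],\qquad v\in\{2,\infty\},
\]
from \cite[Proposition~7]{bk2} (recalled in the paper as \eqref{Thetas}), and to compute the kernel of the combined map $\theta_2\oplus\theta_\infty\oplus\theta_{\mathrm{dR}}$ on your $\mathbb{F}_2$-basis of $\mathcal{A}(f,S)$. This is a genuinely additional piece of code --- and, as the paper notes, a substantially more expensive one per curve --- but it is what brings the count from $487$ up to $574$. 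The rest of your proposal (enumeration from the LMFDB, choice of $S$, computation of $S$-units and class groups for the factor fields of $\mathbb{Q}_f^{(2)}$, handling the change of model in \eqref{Q2ord}) is in line with the paper.
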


In contrast, the N\'{e}ron-Severi group of the Jacobian of each of these $1,138$ curves has rank $1$, so none satisfy the classical quadratic Chabauty criterion discussed above. We are also able to verify finiteness of $X(\mathbb{Q}_2)_2$ via Theorem \ref{ThmA} for several genus $3$ curves -- including those without a RWP -- which we include in the final section.

The structure of the paper is as follows. In Section \ref{SectionPrereqs}, we discuss general Galois cohomological prerequisites necessary for defining $\theta_{\mathrm{dR}}$. We also recall the proof of \eqref{IntroDesc} and prove the analogue for the case that $X$ does not have a RWP. A brief outline of the Chabauty-Kim method is included in Section \ref{SectionChabauty}, as well as further discussion on the utility of $2$-descent for Bloch-Kato Selmer groups. Additionally, in this section we relate $\theta_{\mathrm{dR}}$ to the desired Bloch-Kato Selmer groups. In Section \ref{SectionCriteria}, we show that the main results follow straightforwardly from the general theory outlined in the previous sections. Finally, Section \ref{SectionExamples} provides the outline of a general algorithm to compute $\mathrm{Ker}(\theta_{\mathrm{dR}})$, and presents worked examples for low genus curves. Moreover, it includes the various statistics for relevant curves on the LMFDB, as alluded to in Theorem \ref{ThmB}.

\subsection{Notation and conventions}
Suppose $K$ is a field. Fix $K^{\mathrm{sep}}$ a separable closure of $K$ and denote by $G_K$ the absolute Galois group $\mathrm{Gal}(K^{\mathrm{sep}}/K)$. If $S$ is a finite set of primes of $K$, then denote by $G_{K,S}$ the Galois group of the maximal extension of $K$ unramified outside of $S$; in the case that $K=\mathbb{Q}$, denote $G_S\coloneqq G_{\mathbb{Q},S}$. 

For a finite extension $M/L$ of finite \'{e}tale $K$-algebras, denote their decomposition into a product of fields as
\begin{align*}
M=\prod_{i,j} M_{ij}, \tab L=\prod_{i} L_i,
\end{align*}
such that $M_{ij}$ is a finite field extension of $L_i$. In this case, if $A=(A_i)_i$ is a tuple of $G_{L_i}$-modules, then define the tuple of $G_{M_{ij}}$-modules 
\begin{align*}
\mathrm{Ind}_L^M A \coloneqq \Big(\mathrm{Ind}_{L_i}^{M_{ij}}A_i\Big)_{i,j}=\Big(\mathrm{Ind}_{G_{M_{ij}}}^{G_{L_i}}A_i\Big)_{i,j}.
\end{align*}
Throughout the paper, we exploit the identification of $\mathrm{Ind}_K^{K_f}\mathbb{F}_2$ with the $\mathbb{F}_2$-vector space generated by the roots of $f$, where $K_f$ is the \'{e}tale algebra defined in \eqref{EtAlg}. 

If $K$ is a number field, $V$ is a $\mathbb{Q}_p$-vector space with a continuous action of $G_K$, and $v$ is a prime of $K$ above $p$, we follow \cite{bloch1990functions} in defining the subspaces $H^1_e(K_v,V)\subseteq H^1_f(K_v,V)\subseteq H^1_g(K_v,V) \subseteq H^1(K_v,V)$ given by 
\begin{align*}
H^1_e(K_v,V)&\coloneqq \mathrm{Ker}\big(H^1(K_v,V)\rightarrow H^1(K_v,V\otimes B_{\mathrm{cris}}^{\varphi =1}) \big)\\
H^1_f(K_v,V)&\coloneqq \mathrm{Ker}\big(H^1(K_v,V)\rightarrow H^1(K_v,V\otimes B_{\mathrm{cris}})\big)\\
H^1_g(K_v,V)&\coloneqq \mathrm{Ker}\big(H^1(K_v,V)\rightarrow H^1(K_v,V\otimes B_{\mathrm{dR}})\big). 
\end{align*}
For $v$ not lying above $p$, we adopt the convention 
\begin{align*}
H^1_f(K_v,V)\coloneqq H^1_{\mathrm{nr}}(K_v,V), \tab H^1_g(K_v,V)\coloneqq H^1(K_v,V).
\end{align*}
The global analogue $H^1_*(K,V)$ is then defined as the subgroup of $H^1(K,V)$ consisting of elements that, when restricted to the decomposition groups $G_v$, lie in $H^1_*(K_v,V)$ for each prime $v$ of $K$. For $*\in \{\mathrm{dR},\mathrm{cris}\}$, define
\begin{align*}
D_*(V)\coloneqq H^0(K,V\otimes B_*),
\end{align*}
where $D_{\mathrm{cris}}(V)$ is equipped with a Frobenius, denoted $\varphi$ throughout, induced by the Frobenius on $B_{\mathrm{cris}}$, and $D_{\mathrm{dR}}(V)$ is endowed with a separated and exhaustive decreasing filtration $\mathrm{Fil^\bullet D_{\mathrm{dR}}(V)}$.

Finally, for an abelian group $N$ denote
\begin{align*}
\wedge^2 N \coloneqq N^{\otimes 2}\big / \langle x  \otimes x : x\in N \rangle .
\end{align*}
Throughout this paper, $N$ will be an $\mathbb{F}_2$-vector space and, without specifying, we will identify $\wedge^2 N$ with the $\mathbb{F}_2$-vector space generated by unordered pairs of distinct basis elements of $N$.

\subsection{Acknowledgements} 
I am greatly indebted to my supervisor, Netan Dogra, for suggesting this project, for proofreading earlier drafts, and for numerous insightful conversations on the mathematics of this paper and mathematics more broadly. This work was supported by the Royal Society [RF\textbackslash ERE\textbackslash 210020] and a PhD studentship from the Faculty of Natural, Mathematical and Engineering Sciences at King's College London.

\section{Preliminaries in Galois cohomology}\label{SectionPrereqs}
We first recall key results regarding the Galois cohomology of the wedge square, as detailed in \cite{bk1,bk2}. Suppose $f\in K[x]$ is a polynomial with coefficients in a field $K$ of characteristic different from $2$, and denote by $K_f$ and $K_f^{(2)}$ the \'{e}tale algebras defined in \eqref{EtAlg}. Given the importance of its proof in the subsequent material, we include the following previously mentioned description of $H^1(K,\wedge^2 J[2])$ and its derivation. 

\begin{lemma}\label{BK1Lemmas}
\begin{itemize}
    \item[(1)] There exists an isomorphism of finite $G_K$-modules
    \begin{align*}
    \wedge^2\mathrm{Ind}_K^{K_f}\mathbb{F}_2 \simeq \mathrm{Ind}_K^{K_f^{(2)}}\mathbb{F}_2.
    \end{align*}
    \item[(2)] Suppose that $X$ is a hyperelliptic curve over $K$ given by the model $y^2=f(x)$, where the degree of $f$ is odd. Then 
    \begin{align}\label{CohomOddWedge}
    \begin{split}
    H^1(K,\wedge^2J[2])&\simeq \mathrm{Ker} \Big(H^1\big(K,\mathrm{Ind}_K^{K_f^{(2)}}\mathbb{F}_2\big)\rightarrow H^1\big(K,\mathrm{Ind}_K^{K_f}\mathbb{F}_2\big)\Big)\\
    &\simeq \mathrm{Ker}\big(K_f^{(2),\times}\otimes \mathbb{F}_2 \xrightarrow{\mathrm{Nm}} K_f^\times\otimes \mathbb{F}_2\big).
    \end{split}
    \end{align}
\end{itemize}
\end{lemma}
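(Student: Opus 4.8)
The proof has two parts, handled separately. Part (1) is combinatorial: let $R\subset K^{\mathrm{sep}}$ denote the set of roots of $f$. By the conventions recalled above, $\mathrm{Ind}_K^{K_f}\mathbb{F}_2$ is the permutation module $\mathbb{F}_2[R]$, so $\wedge^2\mathrm{Ind}_K^{K_f}\mathbb{F}_2$ is the permutation module on the set $\binom{R}{2}$ of two-element subsets of $R$. On the other hand, the $K^{\mathrm{sep}}$-points of $\mathrm{Spec}\,K_{f,2}$ are the ordered pairs of distinct roots, and passing to the subalgebra $K_f^{(2)}$ fixed by the swap of $x$ and $y$ identifies the $K^{\mathrm{sep}}$-points of $\mathrm{Spec}\,K_f^{(2)}$ with the quotient of this set by the (free) swap action, that is, with $\binom{R}{2}$; hence $\mathrm{Ind}_K^{K_f^{(2)}}\mathbb{F}_2\cong\mathbb{F}_2\big[\binom{R}{2}\big]$ as well. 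Both $G_K$-actions are induced from the permutation action on $R$, so they agree, which gives (1).

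For part (2), the one geometric input is the standard description of the $2$-torsion of a hyperelliptic Jacobian with a rational Weierstrass point: writing $M=\mathrm{Ind}_K^{K_f}\mathbb{F}_2=\mathbb{F}_2[R]$ and $\sigma=\sum_{r\in R}e_r$, there is an exact sequence of $G_K$-modules
\[
0\longrightarrow\mathbb{F}_2\xrightarrow{1\mapsto\sigma}M\xrightarrow{e_r\mapsto[(r,0)-\infty]}J[2]\longrightarrow 0,
\]
where $\infty$ is the point at infinity; this follows from $\mathrm{div}(x-r)=2(r,0)-2\infty$ and $\mathrm{div}(y)=\sum_r(r,0)-(2g+1)\infty$ together with a count of $\mathbb{F}_2$-dimensions. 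Since $\deg f=2g+1$ is odd, the sum-of-coordinates map $\Sigma\colon M\to\mathbb{F}_2$ satisfies $\Sigma(\sigma)=1$, so $M$ splits $G_K$-equivariantly as $M_0\oplus\langle\sigma\rangle$ with $M_0=\ker\Sigma$; in particular $J[2]\cong M_0$ and $H^1(K,M_0)\hookrightarrow H^1(K,M)$.

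Next I would analyse the norm map $\nu\colon\wedge^2 M=\mathrm{Ind}_K^{K_f^{(2)}}\mathbb{F}_2\to M=\mathrm{Ind}_K^{K_f}\mathbb{F}_2$ appearing in \eqref{IntroDesc}: under the identifications of part (1) it is $\{r,s\}\mapsto e_r+e_s$ (restriction along the double cover $\mathrm{Spec}\,K_{f,2}\to\mathrm{Spec}\,K_f^{(2)}$ followed by the norm along $\mathrm{Spec}\,K_{f,2}\to\mathrm{Spec}\,K_f$), so $\mathrm{image}(\nu)=M_0$. The crux is the claim that $\wedge^2 M$ splits $G_K$-equivariantly as $\ker\nu\oplus(\sigma\wedge M)$: the summand $\sigma\wedge M$ is $G_K$-stable because $\sigma$ is fixed and is isomorphic to $M/\langle\sigma\rangle=J[2]$; a dimension count gives $\dim\ker\nu+\dim(\sigma\wedge M)=\binom{2g}{2}+2g=\binom{2g+1}{2}=\dim\wedge^2 M$; and the intersection is zero because $\nu(\sigma\wedge e_r)=\sigma+e_r$ (here the parity of $|R|-1=2g$ is used), which forces $\nu$ to be injective on $\sigma\wedge M$. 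The same decomposition identifies $\ker\nu$ with $\wedge^2 M/(\sigma\wedge M)=\wedge^2 J[2]$. Passing to cohomology, $H^1(K,\wedge^2 M)=H^1(K,\ker\nu)\oplus H^1(K,\sigma\wedge M)$, the map $\nu_*$ vanishes on the first summand, and on the second it is the injective composite $H^1(K,\sigma\wedge M)\xrightarrow{\sim}H^1(K,M_0)\hookrightarrow H^1(K,M)$; hence $\ker\nu_*=H^1(K,\ker\nu)\cong H^1(K,\wedge^2 J[2])$, which is the first isomorphism of \eqref{CohomOddWedge}. The second isomorphism is Shapiro's lemma together with Kummer theory: $H^1(K,\mathrm{Ind}_K^{A}\mathbb{F}_2)\cong A^\times\otimes\mathbb{F}_2$ for any finite étale $K$-algebra $A$ (as $\operatorname{char} K\neq 2$), and under this dictionary $\nu_*$ becomes the norm $K_f^{(2),\times}\otimes\mathbb{F}_2\to K_f^\times\otimes\mathbb{F}_2$.

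I expect the main obstacle to be the $G_K$-equivariant splitting $\wedge^2 M\cong\ker\nu\oplus(\sigma\wedge M)$ — equivalently, the injectivity of $\nu$ on $\sigma\wedge M$ — since this is precisely where the hypothesis that $\deg f$ is odd (equivalently, that $X$ has a rational Weierstrass point, so that $|R|$ is odd) is used, and it is what makes the clean ``kernel of the norm'' description work; the remaining steps are either formal (Shapiro, Kummer) or standard (the $2$-torsion exact sequence).
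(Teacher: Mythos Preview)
Your proof is correct and follows essentially the same route as the paper. The paper phrases the key step as the split short exact sequence
\[
0\longrightarrow \wedge^2 J[2]\longrightarrow \wedge^2 M\longrightarrow J[2]\longrightarrow 0,\qquad \alpha\wedge\beta\mapsto \alpha+\beta,
\]
with explicit section induced by $\alpha\mapsto\sum_{\beta\neq\alpha}\alpha\wedge\beta$; but since $\sum_{\beta\neq\alpha}\alpha\wedge\beta=\alpha\wedge\sigma$, the image of this section is exactly your complement $\sigma\wedge M$, so your direct-sum decomposition $\wedge^2 M=\ker\nu\oplus(\sigma\wedge M)$ is the same splitting. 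Your observation that injectivity of $\nu$ on $\sigma\wedge M$ uses the parity of $|R|-1$ is precisely where the odd-degree hypothesis enters, matching the paper's remark that the analogous sequence fails to split in the even-degree case.
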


\begin{proof}
Part (1) follows from the first two lemmas of \cite{bk2}, and part (2) is established by its first proposition; we briefly outline the proof of the latter. Recall that when $X$ is given by an odd degree model, there exists the following split exact sequence (see, for example, the proof of \cite[Theorem 1.1]{SCHAEFER1995219}):
\begin{align}\label{DefinitionOddJ}
0\longrightarrow J[2] \longrightarrow \mathrm{Ind}_K^{K_f}\mathbb{F}_2 \xlongrightarrow{\mathrm{Nm}} \mathbb{F}_2 \longrightarrow 0. 
\end{align}
Upon taking exterior powers, this sequence induces the short exact sequence 
\begin{align}\label{DefinitionOddWedge}
0\longrightarrow \wedge^2 J[2] \longrightarrow \wedge^2 \mathrm{Ind}_K^{K_f}\mathbb{F}_2 \longrightarrow J[2] \longrightarrow 0,
\end{align}
obtained from the map that sends $\alpha \wedge \beta $ in $\wedge^2 \mathrm{Ind}_K^{K_f}\mathbb{F}_2$ to $\alpha + \beta $ in $J[2]$, where $\alpha,\beta$ are distinct roots of $f$. This sequence remains split given the section $J[2]\rightarrow \wedge^2 \mathrm{Ind}_K^{K_f}\mathbb{F}_2$ induced by the map
\begin{align*}
\mathrm{Ind}_K^{K_f}\mathbb{F}_2 &\longrightarrow \wedge^2 \mathrm{Ind}_K^{K_f}\mathbb{F}_2 \\
\alpha &\longmapsto \sum_{\beta \neq \alpha} \alpha \wedge \beta. \nonumber
\end{align*}
Consequently, the first isomorphism follows from part (1). The final isomorphism is a consequence of Shapiro's lemma, Kummer theory, and the commutativity of the square
\begin{center}
\begin{tikzcd}
H^1\big(K,\mathrm{Ind}_K^{K_f^{(2)}}\mathbb{F}_2\big)\arrow{r} \arrow{d} & H^1(K,\mathrm{Ind}_K^{K_f}\mathbb{F}_2) \arrow{d} \\
K_f^{(2),\times}\otimes \mathbb{F}_2 \arrow{r} & K_f^\times\otimes \mathbb{F}_2,
\end{tikzcd}
\end{center}
where the bottom map factors through the norm map $K_{f,2}^\times \otimes \mathbb{F}_2 \rightarrow K_f^\times \otimes \mathbb{F}_2$; this is proved in \cite[Lemma 6]{bk1}. 
\end{proof}

\subsection{The even degree case and the generalised Jacobian}
Suppose in this subsection that $X$ is a hyperelliptic curve over $K$ with no rational Weierstrass point. It can therefore be given by the affine model $y^2=f(x)$, where the degree of $f$ is necessarily even. As in the case of explicit $2$-descent for Jacobians of such curves, the difficulty in describing an analogue of \eqref{CohomOddWedge} is that $J[2]$ can no longer be realised as a submodule of $\mathrm{Ind}_K^{K_f}\mathbb{F}_2$. To circumvent this difficulty, it is necessary to introduce a \textit{generalised Jacobian} associated to $X$, following the strategy in \cite{poonsch}. 

Specifically, denote by $J_\mathfrak{m}$ the generalised Jacobian of $X$ relative to the modulus $\mathfrak{m}=\infty^++\infty^-$, where $\infty ^+,\infty^-$ denote the two points at infinity of $X$. The precise definition of the generalised Jacobian is not required in the material that follows; its definition and fundamental properties can be found in \cite{serre2012algebraic}, for instance. It suffices to note that in the case that $X$ is hyperelliptic, $J_\mathfrak{m}$ is a commutative algebraic group that fits into the following short exact sequence of semiabelian varieties: 
\begin{align}\label{GenJDefSES}
0\longrightarrow \mathbb{G}_m(\chi_c) \longrightarrow J_\mathfrak{m} \longrightarrow J \longrightarrow 0,
\end{align}
where $\mathbb{G}_m(\chi_c)$ is the twist of $\mathbb{G}_m$ by the character $\chi_c$ associated to the leading coefficient $c \in K^\times$ of $f$ (see \cite[§4]{poonsch}). 

Of particular importance to explicit descent is that there exists an analogue of the short exact sequence \eqref{DefinitionOddJ}, given by
\begin{align}\label{DefinitionEvenJm}
0\longrightarrow J_\mathfrak{m}[2] \longrightarrow \mathrm{Ind}_K^{K_f}\mathbb{F}_2 \xlongrightarrow{\mathrm{Nm}} \mathbb{F}_2 \longrightarrow 0,  
\end{align}
in the case that $f$ is of even degree; see \cite[Proposition 6.2]{poonsch} for its derivation. Unfortunately, this sequence is no longer split, as is also true of its analogue to \eqref{DefinitionOddWedge}. Nevertheless, the following lemma provides an even degree version of Lemma \ref{BK1Lemmas}.

\begin{lemma}\label{WedgeSqJm}
Suppose $X$ is a hyperelliptic curve given by the affine model above, where the degree of $f\in K[x]$ is even. Then we have the isomorphism 
\begin{align*}
H^1\big(K,\wedge^2 J_\mathfrak{m}[2]\big)\simeq \mathrm{Ker}\Big(H^1\big(K,\mathrm{Ind}_K^{K_f^{(2)}}\mathbb{F}_2\big)\rightarrow H^1\big(K,J_\mathfrak{m}[2]\big)\Big).
\end{align*}
\end{lemma}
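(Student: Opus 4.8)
The plan is to mimic the odd-degree argument from Lemma \ref{BK1Lemmas}(2), but replace the split exact sequence \eqref{DefinitionOddJ} with its even-degree, non-split analogue \eqref{DefinitionEvenJm}. First I would take exterior powers in \eqref{DefinitionEvenJm} to obtain the short exact sequence
\begin{align*}
0\longrightarrow \wedge^2 J_\mathfrak{m}[2] \longrightarrow \wedge^2 \mathrm{Ind}_K^{K_f}\mathbb{F}_2 \longrightarrow J_\mathfrak{m}[2] \longrightarrow 0,
\end{align*}
where the right-hand map sends $\alpha\wedge\beta$ to $\alpha+\beta$, exactly as in the derivation of \eqref{DefinitionOddWedge}; this is a purely formal consequence of having a two-step filtration $0\subseteq\wedge^2 J_\mathfrak{m}[2]\subseteq\wedge^2\mathrm{Ind}_K^{K_f}\mathbb{F}_2$ with graded pieces $\wedge^2 J_\mathfrak{m}[2]$ and $J_\mathfrak{m}[2]$, and it does not use splitness of \eqref{DefinitionEvenJm}.

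Next I would invoke Lemma \ref{BK1Lemmas}(1), which gives $\wedge^2\mathrm{Ind}_K^{K_f}\mathbb{F}_2\simeq\mathrm{Ind}_K^{K_f^{(2)}}\mathbb{F}_2$ as $G_K$-modules (this isomorphism is formal in $f$ and does not require $\deg f$ odd). Feeding this into the long exact cohomology sequence of the displayed short exact sequence, one gets
\begin{align*}
H^1\big(K,\wedge^2 J_\mathfrak{m}[2]\big)\longrightarrow H^1\big(K,\mathrm{Ind}_K^{K_f^{(2)}}\mathbb{F}_2\big)\longrightarrow H^1\big(K,J_\mathfrak{m}[2]\big),
\end{align*}
so $H^1(K,\wedge^2 J_\mathfrak{m}[2])$ surjects onto the kernel of the second map. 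To upgrade this surjection to the claimed isomorphism it remains to check that the connecting map $H^0(K,J_\mathfrak{m}[2])\to H^1(K,\wedge^2 J_\mathfrak{m}[2])$ is zero, equivalently that $H^0\big(K,\wedge^2\mathrm{Ind}_K^{K_f}\mathbb{F}_2\big)\to H^0(K,J_\mathfrak{m}[2])$ is surjective. This is the one place the split section from the odd-degree case is no longer literally available, so I would argue instead directly at the level of $H^0$: an element of $H^0(K,J_\mathfrak{m}[2])$ is a $G_K$-invariant element of $\mathrm{Ind}_K^{K_f}\mathbb{F}_2$ of norm zero (using \eqref{DefinitionEvenJm}), and one can write down an explicit $G_K$-invariant lift to $\wedge^2\mathrm{Ind}_K^{K_f}\mathbb{F}_2$ by the same formula $\alpha\mapsto\sum_{\beta\neq\alpha}\alpha\wedge\beta$ — this formula is $G_K$-equivariant regardless of parity, and the failure of splitness in \eqref{DefinitionEvenJm} only obstructs extending it to a module section over the full $\mathrm{Ind}_K^{K_f}\mathbb{F}_2$, not its restriction to the $G_K$-fixed norm-zero subspace. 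Alternatively, and perhaps more cleanly, one observes that $H^0(K,J_\mathfrak{m}[2])$ consists of degree-zero $\mathbb{F}_2$-combinations of Galois orbits of roots of $f$, and for each such combination the proposed lift is manifestly Galois-stable; this gives the required surjectivity on $H^0$.

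The main obstacle is precisely this last point: verifying that, despite the non-splitness of \eqref{DefinitionEvenJm} (and of the derived wedge-square sequence), the connecting homomorphism out of $H^0(K,J_\mathfrak{m}[2])$ still vanishes. I expect the resolution to be the observation in the previous paragraph — that the naive lifting formula is Galois-equivariant on the nose and its obstruction to being a module-theoretic section lives entirely in a direction that is invisible on $H^0$ — so that the argument goes through with essentially no new input beyond Lemma \ref{BK1Lemmas} and the sequence \eqref{DefinitionEvenJm}. Note that no Kummer-theoretic reinterpretation of the target is claimed here (unlike \eqref{CohomOddWedge}), since the relevant norm map now lands in $H^1(K,J_\mathfrak{m}[2])$ rather than in $K_f^\times\otimes\mathbb{F}_2$; this is consistent with the statement of the lemma and keeps the proof short.
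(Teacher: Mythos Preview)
Your overall strategy is exactly the paper's: pass to the exterior-square sequence, identify the middle term via Lemma~\ref{BK1Lemmas}(1), and then kill the connecting map by exhibiting Galois-invariant preimages in $H^0$. The gap is in the specific lift you write down. The map $\alpha\mapsto\sum_{\beta\neq\alpha}\alpha\wedge\beta$ is indeed $G_K$-equivariant for any $f$, but it is \emph{not} a section of $\wedge^2\mathrm{Ind}_K^{K_f}\mathbb{F}_2\to J_\mathfrak{m}[2]$ when $\deg f$ is even. Compose with $\alpha\wedge\beta\mapsto\alpha+\beta$: a single root $\alpha$ is sent to $(n-1)\alpha+\sum_{\beta\neq\alpha}\beta=n\alpha+\sum_\gamma\gamma$, which for $n$ even equals the constant $\sum_\gamma\gamma$. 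Summing over an even-cardinality support therefore yields $0$, not the original element. So your proposed lift lands in the kernel, and the argument as written does not establish surjectivity on $H^0$. (Your diagnosis that ``the obstruction lives in a direction invisible on $H^0$'' is also off: the formula is already a perfectly good $G_K$-module map; what fails is purely the section property, and it fails because of the parity of $n$, not because of non-splitness.)

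The fix is the one the paper uses: given a $G_K$-invariant element $\sum_{i=1}^{2d}\alpha_i\in H^0(K,J_\mathfrak{m}[2])$ supported on a Galois-stable set of roots $\{\alpha_1,\dots,\alpha_{2d}\}$, lift it to
\[
\sum_{1\le i<j\le 2d}\alpha_i\wedge\alpha_j\ \in\ H^0\big(K,\wedge^2\mathrm{Ind}_K^{K_f}\mathbb{F}_2\big),
\]
i.e.\ wedge only within the support rather than against all roots of $f$. This is visibly $G_K$-invariant, and under $\alpha\wedge\beta\mapsto\alpha+\beta$ each $\alpha_k$ occurs $2d-1$ times, so the image is $\sum_k\alpha_k$ as required. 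With this correction your argument is complete and coincides with the paper's.
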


\begin{proof}
Identically following the proof of part (2) of Lemma \ref{BK1Lemmas}, the short exact sequence \eqref{DefinitionEvenJm} induces the exact sequence
\begin{align}\label{DefinitionEvenWedge}
\begin{split}
0\longrightarrow \wedge^2 J_\mathfrak{m}[2] \longrightarrow \wedge^2 \mathrm{Ind}_K^{K_f}\mathbb{F}_2 &\longrightarrow J_\mathfrak{m}[2] \longrightarrow 0 \\
\alpha\wedge \beta &\longmapsto \alpha +\beta. 
\end{split}
\end{align}
Any element of $H^0(K,J_\mathfrak{m}[2])$ can be expressed as a sum $\sum_{i=1}^{2d}\alpha_i$ of roots $\alpha_i$ of $f$, such that $\{\alpha_i\}_{i=1}^{2d}$ is precisely the roots of the product of irreducible factors of $f$ in $K[x]$. However, each such element is the image of 
\begin{align*}
    \sum_{i=1}^{2d-1}\sum_{j=i+1}^{2d}(\alpha_i\wedge \alpha_j) \in H^0\big(K,\wedge^2 \mathrm{Ind}_{K}^{K_f}\mathbb{F}_2\big).
\end{align*} Consequently, the boundary map $H^0(K,J_\mathfrak{m}[2]) \rightarrow H^1\big(K,\wedge^2 J_\mathfrak{m}[2]\big)$ associated to the short exact sequence \eqref{DefinitionEvenWedge} is trivial. The result then follows from part (1) of Lemma \ref{BK1Lemmas}.
\end{proof}

\begin{remark}
In the case that $f$ has an irreducible factor of odd degree in $K[x]$, this lemma recovers an isomorphism equivalent to \eqref{CohomOddWedge}. Specifically, we obtain that
\begin{align*}
H^1(K,\wedge^2J_\mathfrak{m}[2])\simeq \mathrm{Ker} \Big(H^1\big(K,\mathrm{Ind}_K^{K_f^{(2)}}\mathbb{F}_2\big)\rightarrow H^1\big(K,\mathrm{Ind}_K^{K_f}\mathbb{F}_2\big)\Big),
\end{align*}
which is more amenable to field-theoretic description (via Shapiro's lemma and Kummer theory) than the right-hand side of the isomorphism in the statement of the lemma. 
\end{remark}

\subsection{Hyperelliptic curves over $\mathbb{Q}_2$ with good ordinary reduction}

Throughout this subsection, unless otherwise specified, fix $K=\mathbb{Q}_2$. Suppose $X$ is a genus $g$ hyperelliptic curve over $K$ given by the Weierstrass model
\begin{align}\label{WeierstrassModel}
y^2 + Q(x)y = P(x),
\end{align}
such that $\deg f\in \{2g+1,2g+2\}$, where $f\coloneqq 4P+Q^2$. In this subsection, we impose no restriction on the number of RWPs on $X$. Additionally, assume that $X$ has good reduction, so that we have the reduction map on $2$-torsion
\begin{align}\label{ReductionMap}
J[2] \longrightarrow J_{\mathbb{F}_2}[2],
\end{align}
where $J_{\mathbb{F}_2}$ denotes the special fibre of $J$. Moreover, assume that $X$ has ordinary reduction; this implies that $X$ can be given by the Weierstrass model \eqref{WeierstrassModel} such that the reduction $\overline{Q}\in\mathbb{F}_2[x]$ defines a separable, degree $g+1$ polynomial (see \cite[Remark 2.2]{DOKCHITSER2023264}). In this case, $J_{\mathbb{F}_2}[2]$ has a description analogous to \eqref{DefinitionOddJ}, provided by the following general lemma. 

\begin{lemma}[\texorpdfstring{\cite[Lemma 2.1]{DOKCHITSER2023264}}{[DM23, Lemma 2.1]}]
Let $X$ be a hyperelliptic curve of genus $g\geq 2$ over a field of characteristic $2$, given by the Weierstrass model \eqref{WeierstrassModel} such that $\deg Q = g+1$. Let $\mathcal{S}$ denote the set of subsets of roots of $Q$ of even size, endowed with a group structure defined by symmetric difference. The map $\mathcal{S}\rightarrow J[2]$ defined by
\begin{align*}
A\mapsto \sum_{\beta\in \mathrm{Roots}(Q)} (\beta,\sqrt{P(\beta)}) -\frac{|A|}{2}(\infty^++\infty ^-)
\end{align*}
defines an isomorphism of Galois modules. 
\end{lemma}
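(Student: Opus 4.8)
The plan is to verify in turn that the map in question --- which sends an even-size subset $A\subseteq\mathrm{Roots}(Q)$ to the class of the divisor $D_A:=\sum_{\beta\in A}W_\beta-\frac{|A|}{2}(\infty^++\infty^-)$, where $W_\beta:=(\beta,\sqrt{P(\beta)})$ --- (i) takes values in $J[2]$, (ii) is a group homomorphism, (iii) is $G_k$-equivariant, and (iv) is bijective; only (iv), and within it only injectivity, carries any real content. For (i) I would use that, since $Q(\beta)=0$, the point $W_\beta$ is the unique point of $X$ above $x=\beta$, hence a ramification point of the degree-two morphism $x\colon X\to\mathbb{P}^1$, while the fibre of this morphism over $\infty$ consists of the two distinct points $\infty^+,\infty^-$ (the hyperelliptic involution, which is the nontrivial deck transformation of $x$, interchanges the two points at infinity). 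Thus $\mathrm{div}(x-\beta)=2W_\beta-\infty^+-\infty^-$; since $|A|$ is even, $\deg D_A=0$, and $2D_A=\mathrm{div}\big(\prod_{\beta\in A}(x-\beta)\big)$ is principal, so $[D_A]\in J[2]$.

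For (ii) I would compare multiplicities to obtain, for even subsets $A,B$, the identity $D_A+D_B-D_{A\triangle B}=\sum_{\beta\in A\cap B}(2W_\beta-\infty^+-\infty^-)=\mathrm{div}\big(\prod_{\beta\in A\cap B}(x-\beta)\big)$, whence $[D_A]+[D_B]=[D_{A\triangle B}]$ in $J$. For (iii) the key observation is that $\sqrt{P(\beta)}$ is the \emph{unique} square root of $P(\beta)$ in $\overline{k}$ (the ground field being perfect of characteristic $2$), so that $\sigma\cdot W_\beta=W_{\sigma\beta}$ for every $\sigma\in G_k$, while $\infty^++\infty^-$ is $G_k$-stable; hence $\sigma\cdot D_A=D_{\sigma A}$ and the map commutes with $G_k$.

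The substantive step is injectivity, which I would prove as follows. Suppose $[D_A]=0$, say $D_A=\mathrm{div}(h)$ with $h\in\overline{k}(X)^\times$. Then $\mathrm{div}(h^2)=2D_A=\mathrm{div}\big(\prod_{\beta\in A}(x-\beta)\big)$, so $r(x):=\prod_{\beta\in A}(x-\beta)$ differs from $h^2$ by a constant, which is a square as $\overline{k}$ is algebraically closed; hence $r(x)$ is a square in the function field $\overline{k}(X)$. But $\overline{k}(X)/\overline{k}(x)$ is a \emph{separable} (Artin--Schreier) extension of degree $2$, so if $r(x)$ were not already a square in $\overline{k}(x)$ then $\overline{k}(x)\big(\sqrt{r(x)}\big)$ would be a purely inseparable extension of $\overline{k}(x)$ of degree $2$ sitting inside $\overline{k}(X)$, forcing $\overline{k}(X)=\overline{k}(x)\big(\sqrt{r(x)}\big)$ --- impossible. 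Therefore $r(x)$ is a square polynomial; since $Q$ is separable its roots are distinct, so a square $\prod_{\beta\in A}(x-\beta)$ must be constant, i.e.\ $A=\emptyset$. Finally, $\mathcal{S}$ has $2^{g}$ elements while $|J[2](\overline{k})|\le 2^{g}$ (the $2$-rank of $J$ being at most $\dim J=g$), so an injective, $G_k$-equivariant homomorphism $\mathcal{S}\to J[2](\overline{k})$ is necessarily an isomorphism of Galois modules --- and, as a by-product, $J$ turns out to be ordinary.

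I expect the injectivity step to be the main obstacle, and within it the implication ``a polynomial in $x$ that becomes a square in $\overline{k}(X)$ is already a square in $\overline{k}(x)$''. This is precisely where characteristic $2$ enters in an essential way: the separability of the hyperelliptic cover is exactly what excludes the purely inseparable alternative, and some care is needed in setting up the field-theoretic comparison. The remaining items are routine divisor bookkeeping and the standard bound on the $p$-rank of an abelian variety, and should present no difficulty.
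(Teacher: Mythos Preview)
Your proof is correct and complete. The paper itself does not supply a proof of this lemma: it is quoted verbatim from \cite[Lemma 2.1]{DOKCHITSER2023264} and used as a black box, so there is nothing in the present paper to compare your argument against.

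That said, your argument is a clean self-contained proof of the cited result. The divisor bookkeeping in (i)--(iii) is standard and correct (the key point that $W_\beta$ is a ramification point follows from the hyperelliptic involution $(x,y)\mapsto(x,-y-Q(x))$ fixing it when $Q(\beta)=0$ in characteristic $2$). Your injectivity argument is the crux and is sound: the extension $\overline{k}(X)/\overline{k}(x)$ is Artin--Schreier, hence separable of degree $2$, and so cannot contain the purely inseparable degree-$2$ extension $\overline{k}(x)\bigl(\sqrt{r(x)}\bigr)$ unless $r(x)$ is already a square in $\overline{k}(x)$; separability of $Q$ (implicit in the hypotheses, since the curve is smooth and $\deg Q=g+1$) then forces $A=\emptyset$. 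The cardinality count $|\mathcal{S}|=2^g\ge |J[2](\overline{k})|$ finishes the job and, as you note, recovers ordinarity of $J$ as a consequence rather than a hypothesis.

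One small remark: you might mention explicitly that the assumption $\deg Q=g+1$ is what guarantees two distinct points at infinity (so that $\infty^+\neq\infty^-$ and $\mathrm{div}(x-\beta)=2W_\beta-\infty^+-\infty^-$ holds as stated), but this is implicit in your setup and not a gap.
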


In particular, given that the action of $G_{\mathbb{Q}_2}$ on the roots of $\overline{Q}\in \mathbb{F}_2[x]$ factors through the action of $G_{\mathbb{F}_2}$, we obtain, from the above lemma, the exact sequence 
\begin{align}\label{SpFibSES}
0\longrightarrow J_{\mathbb{F}_2}[2] \longrightarrow \mathrm{Ind}_{K}^{K_{Q}}\mathbb{F}_2 \xlongrightarrow{\mathrm{Nm}} \mathbb{F}_2 \longrightarrow 0,
\end{align}
from which we deduce that \eqref{ReductionMap} can be explicitly described via the roots of the polynomials $f$ and $Q$. To obtain such a description, we require the following notation, which shall be used throughout (cf. Notation 1.1 and the proof of \cite[Lemma 3.1]{DOKCHITSER2023264}). 

\begin{notation}\label{RootsNotation}
Suppose $X$ is a hyperelliptic curve over $\mathbb{Q}_2$ with good ordinary reduction, given by the Weierstrass equation \eqref{WeierstrassModel} such that $\deg Q = g+1$. Given that $\overline{Q}\in \mathbb{F}_2[x]$ is separable, the roots $\beta_i$ of $Q$ are contained in the maximal unramified extension $\mathbb{Q}_2^{nr}$ of $\mathbb{Q}_2$. As above, setting $f=4P+Q^2$, we have 
\begin{align*}
f(x)\equiv Q(x)^2 \equiv \overline{c}\cdot\prod_{i=1}^{g+1}(x-\beta_i)^2\textrm{ (mod }4),
\end{align*}
which lifts, via Hensel's lemma, to the factorisation in $\mathbb{Q}_2^{nr}[x]$ given by 
\begin{align*}
f(x)=c\cdot\prod_{i=1}^{g+1}q_i(x), 
\end{align*}
for quadratic $q_i$ such that $q_i(x) \equiv (x-\beta_i)^2 \textrm{ (mod }4)$. Denote the roots of $q_i$ as $\alpha_{i,1}$ and $\alpha_{i,2}$ so that the reductions of $\alpha_{i,1}$, $\alpha_{i,2}$ and $\beta_i$ coincide in $\overline{\mathbb{F}}_2$. 
\end{notation}

We will follow the convention of labeling the roots of $f$ and $Q$ in accordance with Notation \ref{RootsNotation} for the remainder of the paper. In particular, it is clear that the reduction map \eqref{ReductionMap} is induced by the explicit map
\begin{align}\label{IndRed}
\mathrm{Ind}_K^{K_f}\mathbb{F}_2 &\longrightarrow \mathrm{Ind}_K^{K_Q}\mathbb{F}_2, 
\end{align}
which sends a root $\alpha_{i,j}$ of $f$ to the corresponding root $\beta_i$ of $Q$. 

Throughout this paper, we will be interested in the map -- and the corresponding maps on the associated cohomology groups -- given by
\begin{align}\label{ReductionWedgeMap}
\wedge^2 J[2] \longrightarrow \wedge^2 J_{\mathbb{F}_2}[2], 
\end{align}
induced by the reduction map \eqref{ReductionMap}. Towards this, note that an identical argument to the proof of Lemma \ref{BK1Lemmas}(2), applied to the exact sequence \eqref{SpFibSES}, yields the isomorphism 
\begin{align}\label{OrdinaryWedge}
H^1(K,\wedge^2 J_{\mathbb{F}_2}[2])\simeq \mathrm{Ker}\big(K_{Q}^{(2),\times} \otimes \mathbb{F}_2 \rightarrow H^1(K,J_{\mathbb{F}_2}[2]) \big).
\end{align} 
In the case that $g$ is even, the sequence \eqref{SpFibSES} splits and therefore the right-hand side of \eqref{OrdinaryWedge} is just the kernel of the map to $K_Q^\times \otimes \mathbb{F}_2$. Essential in the material that follows -- particularly in defining the map $\theta_{\mathrm{dR}}$ mentioned in the introduction -- is an explicit description of
\begin{align*}
H^1(K,\wedge^2J[2])\longrightarrow H^1(K,\wedge^2 J_{\mathbb{F}_2}[2]),
\end{align*}
and related maps, associated to \eqref{ReductionWedgeMap}. Given the field theoretic descriptions of $H^1(K,\wedge^2J[2])$, $H^1(K,\wedge^2J_\mathfrak{m}[2])$ and $H^1(K,\wedge^2J_{\mathbb{F}_2}[2])$ afforded by Lemmas \ref{BK1Lemmas} and \ref{WedgeSqJm}, and the isomorphism \eqref{OrdinaryWedge}, respectively, it will therefore be necessary to have an explicit description of the map 
\begin{align}\label{FieldTheoreticMap}
K_f^{(2),\times}\otimes \mathbb{F}_2 \longrightarrow K_Q^{(2),\times}\otimes \mathbb{F}_2,
\end{align}
obtained from the map on finite $G_K$-modules 
\begin{align}\label{IndReductionWedgeMap} 
\begin{split}
\wedge^2 \mathrm{Ind}_K^{K_f}\mathbb{F}_2 &\longrightarrow \wedge^2\mathrm{Ind}_K^{K_Q}\mathbb{F}_2\\
\alpha_{i,j}\wedge \alpha_{k,l} &\longmapsto \beta_i\wedge \beta_k. 
\end{split}
\end{align}
The following general lemma is essential for this purpose, where we do not necessarily require that $K=\mathbb{Q}_2$. 

\begin{lemma}\label{GeneralInd}
Suppose $K$ is a field of characteristic different from $2$ and $L\subseteq M$ are finite extensions of $K$. Then the map
\begin{align}\label{GeneralMap}
\begin{split}
\mathrm{Ind}_K^M \mathbb{F}_2 & \longrightarrow \mathrm{Ind}_K^L \mathbb{F}_2\\ 
\sigma \otimes_{\mathbb{Z}[G_M]} 1 &\longmapsto \sigma \otimes_{\mathbb{Z}[G_L]} 1 
\end{split}
\end{align}
induces the commutative diagram 
\begin{center}
\begin{tikzcd}
H^1(K,\mathrm{Ind}_K^M \mathbb{F}_2) \arrow{r}\arrow{d} & H^1(K,\mathrm{Ind}_K^L \mathbb{F}_2) \arrow{d}\\
M^\times \otimes \mathbb{F}_2 \arrow{r}{N_{M/L}} & L^\times \otimes \mathbb{F}_2.
\end{tikzcd}
\end{center}

\end{lemma}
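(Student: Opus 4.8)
The plan is to reduce the statement to the functoriality of Shapiro's lemma with respect to the corestriction map. First I would identify the map \eqref{GeneralMap} on $G_K$-modules with a familiar transfer-type map: using the description $\mathrm{Ind}_K^M\mathbb{F}_2 \simeq \mathbb{Z}[G_M\backslash G_K]\otimes\mathbb{F}_2$ (functions on cosets, or equivalently the free $\mathbb{F}_2$-module on the embeddings $M\hookrightarrow K^{\mathrm{sep}}$ over $K$), the map sends the basis vector indexed by a coset $G_M\sigma$ to the basis vector indexed by the coset $G_L\sigma$ containing it. Since $L\subseteq M$, each $G_L$-coset breaks into $[M:L]$ many $G_M$-cosets, so this is exactly the $\mathbb{F}_2$-linear map that ``sums the fibres'' — i.e. it is the map induced by $G_M\backslash G_K \to G_L\backslash G_K$. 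Under Shapiro's lemma $H^1(K,\mathrm{Ind}_K^M\mathbb{F}_2)\simeq H^1(M,\mathbb{F}_2)$ and $H^1(K,\mathrm{Ind}_K^L\mathbb{F}_2)\simeq H^1(L,\mathbb{F}_2)$, and the standard compatibility of Shapiro's isomorphism with the projection $G_M\backslash G_K\to G_L\backslash G_K$ identifies the top horizontal map with the corestriction (transfer) map $\mathrm{cor}^M_L\colon H^1(M,\mathbb{F}_2)\to H^1(L,\mathbb{F}_2)$. This is the one place where I would be careful: I want to pin down that the specific map \eqref{GeneralMap}, with the specific normalisation $\sigma\otimes_{\mathbb{Z}[G_M]}1\mapsto \sigma\otimes_{\mathbb{Z}[G_L]}1$, is corestriction and not, say, restriction; this follows by unwinding the explicit cocycle formula for Shapiro and for $\mathrm{cor}$, but it is the main obstacle in the sense that all the content lives there.

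Next I would invoke Kummer theory: $H^1(M,\mathbb{F}_2)\simeq H^1(M,\mu_2)\simeq M^\times/(M^\times)^2 \simeq M^\times\otimes\mathbb{F}_2$, and similarly for $L$, these identifications being canonical (characteristic of $K$ is not $2$, so $\mu_2\subseteq K$). The final step is the classical fact that corestriction in Galois cohomology corresponds, under the Kummer isomorphism $H^1(M,\mu_2)\simeq M^\times\otimes\mathbb{F}_2$, to the norm map $N_{M/L}\colon M^\times\to L^\times$; this is standard (it is, for instance, the degree-one instance of the compatibility of corestriction with the norm on $H^0$ shifted by the Kummer sequence, or can be read off directly from the cocycle description). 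Stitching these three identifications together — Shapiro $\leftrightarrow$ corestriction, Kummer on both sides, corestriction $\leftrightarrow$ norm — gives precisely the commutative square in the statement.

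The only genuinely delicate bookkeeping is the first step, namely verifying that \eqref{GeneralMap} as normalised induces corestriction rather than some other natural map; everything else is an assembly of well-known functorialities. I would therefore write the proof by first recording the identification of \eqref{GeneralMap} with the coset-projection map, then citing (e.g. from a standard reference on Galois cohomology) both the Shapiro–corestriction compatibility and the corestriction–norm compatibility under Kummer theory, and finally concatenating the resulting commutative squares.
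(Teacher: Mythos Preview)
Your proposal is correct and follows essentially the same approach as the paper: both reduce the square to (i) the compatibility of Shapiro's isomorphism with the coset-projection map $\mathrm{Ind}_K^M\mathbb{F}_2\to\mathrm{Ind}_K^L\mathbb{F}_2$, identifying the induced map with corestriction $H^1(M,\mathbb{F}_2)\to H^1(L,\mathbb{F}_2)$, and (ii) the classical fact that corestriction corresponds to the norm under Kummer theory. The paper organises this slightly differently---factoring through the intermediate group $H^1(L,\mathrm{Ind}_L^M\mathbb{F}_2)$ and the degree map $\mathrm{Ind}_L^M\mathbb{F}_2\to\mathbb{F}_2$, then citing \cite[Theorem 1.1]{SCHAEFER1995219} for the norm compatibility and carrying out the Shapiro compatibility by an explicit cocycle chase through the $\mathrm{Ind}/\mathrm{coInd}$ identification---but this is exactly the verification you flag as ``the one place where I would be careful,'' made self-contained rather than cited.
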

\begin{proof}
Consider the following diagram: 
\begin{center}
\begin{tikzcd}
H^1\big(K,\mathrm{Ind}_K^{M}\mathbb{F}_2\big)\arrow{r}\arrow{d} & H^1\big(L, \mathrm{Ind}_{L}^{M}\mathbb{F}_2\big)\arrow{r}\arrow{d} & M^\times \otimes \mathbb{F}_2\arrow{d}{N_{M/L}} \\
H^1\big(K,\mathrm{Ind}_K^{L}\mathbb{F}_2\big)\arrow{r} & H^1\big(L, \mathbb{F}_2\big)\arrow{r} & L^\times \otimes \mathbb{F}_2,
\end{tikzcd}
\end{center}
where the horizontal maps are the isomorphisms from Shapiro's lemma and Kummer theory, the left vertical map is induced by the map in the statement of the lemma and the central vertical map is induced by the degree map
\begin{align*}
\mathrm{Ind}_L^M \mathbb{F}_2 &\longrightarrow \mathbb{F}_2 \\
\sigma \otimes_{\mathbb{Z}[G_M]} 1 & \longmapsto 1.
\end{align*}
The right-hand square commutes due to \cite[Theorem 1.1]{SCHAEFER1995219}, via the identification $\mathrm{Ind}_L^M\mathbb{F}_2\simeq \mu_2(M\otimes L^{\mathrm{sep}})$. It remains to prove the commutativity of the left-hand square. To this end, suppose that a class $[\xi]\in H^1(K,\mathrm{Ind}_K^M\mathbb{F}_2)$ is represented by the cocycle
\begin{align}\label{ExpCoc}
\begin{split}
\xi: G_K &\longrightarrow \mathrm{Ind}_K^M\mathbb{F}_2  \\
\sigma &\longmapsto \sum_i \sigma_i \otimes_{\mathbb{Z}[G_M]} 1,
\end{split}
\end{align}
for automorphisms $\sigma_i \in G_K$. To determine the image of $[\xi]$ in $H^1(L,\mathbb{F}_2)$ by traversing the square clockwise, we must consider the following composition: 
\begin{align*}
H^1(K,\mathrm{Ind}_K^M\mathbb{F}_2) \xrightarrow{\sim } H^1(K,\mathrm{Ind}_K^L\mathrm{Ind}_L^M\mathbb{F}_2) \xrightarrow{\sim} H^1(K,\mathrm{coInd}_K^L\mathrm{Ind}_L^M\mathbb{F}_2) \xrightarrow{\mathrm{sh}^1} H^1(L,\mathrm{Ind}_L^M\mathbb{F}_2),
\end{align*}
where the first map is the obvious one, the second is induced by the explicit isomorphism between induction and coinduction (see \cite[Proposition 5.9]{brown2012cohomology}), and the final map is the explicit isomorphism from Shapiro's lemma (see \cite[Proposition 8]{StixShapiro}). Applying each of these maps to $\xi$ above, we obtain that its image in $H^1(L,\mathrm{Ind}_L^M\mathbb{F}_2)$ is represented by the cocycle
\begin{align}\label{AlmostThere}
\begin{split}
G_L &\longrightarrow \mathrm{Ind}_L^M\mathbb{F}_2 \\
\sigma &\longmapsto \sum_{i | \sigma_i \in G_L} \sigma_i\otimes_{\mathbb{Z}[G_M]}1,
\end{split}
\end{align}
where the $\sigma_i\in G_K$ are defined by the image of $\sigma$ under $\xi$, as above. Finally, by the explicit isomorphisms mentioned above, it straightforwardly follows that the image of $[\xi]$ in $H^1(L,\mathbb{F}_2)$, when traversing the square counterclockwise, is represented by a cocycle that maps some $\sigma \in G_L$ to the degree of the corresponding sum in \eqref{AlmostThere}, as required. 
\end{proof}

Returning to the case of interest $K=\mathbb{Q}_2$, we may then exploit Lemma \ref{GeneralInd} to obtain the desired description of the map \eqref{FieldTheoreticMap} via the following definition. 

\begin{definition}\label{L2def}
On the level of finite $G_K$-sets, the map 
\begin{align}\label{Injection}
\begin{split}
\mathrm{Ind}_K^{K_Q}\mathbb{F}_2 &\longrightarrow \wedge^2 \mathrm{Ind}_K^{K_f}\mathbb{F}_2\\
\beta_i&\longmapsto \alpha_{i,1}\wedge \alpha_{i,2}. 
\end{split}
\end{align}
defines a $G_K$-equivariant injection, which induces a decomposition of $\wedge^2 \mathrm{Ind}_K^{K_f}\mathbb{F}_2$ as a direct sum. Define the finite extension $L^{(2)}/K_Q^{(2)}$ of \'{e}tale algebras such that the cokernel of \eqref{Injection} is isomorphic to the induced module $\mathrm{Ind}_K^{L^{(2)}}\mathbb{F}_2$.
\end{definition}

\begin{lemma}\label{ExplicitMap}
We have the commutative diagram
\begin{center}
\begin{tikzcd}
H^1\big(K,\wedge^2 \mathrm{Ind}_K^{K_f} {\mathbb{F}}_2 \big) \arrow{r}\arrow{d} & H^1\big(K,\wedge^2 \mathrm{Ind}_K^{K_Q} {\mathbb{F}}_2 \big)\arrow{d} \\
K_f^{(2),\times} \otimes {\mathbb{F}}_2 \arrow{r} & K_Q^{(2),\times} \otimes {\mathbb{F}}_2
\end{tikzcd}
\end{center}
where the vertical maps are the compositions of the isomorphisms from Shapiro's lemma and Kummer theory via Lemma \ref{BK1Lemmas}(1), the top horizontal map is induced by \eqref{IndReductionWedgeMap}, and the bottom horizontal map is the composition
\begin{align*}
K_f^{(2),\times} \otimes {\mathbb{F}}_2\xlongrightarrow\sim K_Q^\times \otimes \mathbb{F}_2\oplus L^{(2),\times}\otimes {\mathbb{F}}_2 \xlongrightarrow{} K_Q^{(2),\times} \otimes {\mathbb{F}}_2,
\end{align*}
where this final map is the projection to $L^{(2),\times}\otimes \mathbb{F}_2$ composed with the norm map down to $K_Q^{(2)}$. 
\end{lemma}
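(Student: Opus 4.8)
The plan is to deduce the commutativity from the combinatorics of the $G_K$-set of unordered pairs of roots of $f$, together with a single invocation of Lemma~\ref{GeneralInd}. By the conventions of the introduction, $\wedge^2\mathrm{Ind}_K^{K_f}\mathbb{F}_2$ is the $\mathbb{F}_2$-vector space on unordered pairs of distinct roots of $f$; by Lemma~\ref{BK1Lemmas}(1) this module is $\mathrm{Ind}_K^{K_f^{(2)}}\mathbb{F}_2$, and this identification is precisely the one used to build the left vertical map (Shapiro's lemma followed by Kummer theory). Applying Lemma~\ref{BK1Lemmas}(1) instead to $Q$ gives $\wedge^2\mathrm{Ind}_K^{K_Q}\mathbb{F}_2\simeq\mathrm{Ind}_K^{K_Q^{(2)}}\mathbb{F}_2$, the $\mathbb{F}_2$-vector space on unordered pairs of distinct roots of $Q$, which builds the right vertical map.

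First I would record the decomposition underlying Definition~\ref{L2def}. Using Notation~\ref{RootsNotation}, the set of unordered pairs of distinct roots of $f$ is the disjoint, $G_K$-stable union of the pairs $\{\alpha_{i,1},\alpha_{i,2}\}$ (the two roots of a common $q_i$) and the pairs $\{\alpha_{i,j},\alpha_{k,l}\}$ with $i\neq k$. The first family is $G_K$-equivariantly identified with the roots of $Q$ via $\{\alpha_{i,1},\alpha_{i,2}\}\leftrightarrow\beta_i$ — this is the injection \eqref{Injection} — so it contributes the summand $\mathrm{Ind}_K^{K_Q}\mathbb{F}_2$; the second is by definition the summand $\mathrm{Ind}_K^{L^{(2)}}\mathbb{F}_2$. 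Since a splitting of a permutation $G_K$-module into permutation submodules corresponds to a product decomposition of the underlying \'{e}tale algebra, and since Shapiro's lemma and Kummer theory are additive along such products, this yields $K_f^{(2)}\simeq K_Q\times L^{(2)}$ and, compatibly with the left vertical map, the first isomorphism $K_f^{(2),\times}\otimes\mathbb{F}_2\xrightarrow{\sim}K_Q^\times\otimes\mathbb{F}_2\oplus L^{(2),\times}\otimes\mathbb{F}_2$ in the composite defining the bottom horizontal map. I would also observe that $\{\alpha_{i,j},\alpha_{k,l}\}\mapsto\{\beta_i,\beta_k\}$ defines a $G_K$-equivariant surjection from the second family onto the set of unordered pairs of distinct roots of $Q$, all of whose fibres have size $4$, exhibiting $L^{(2)}$ as a degree-$4$ extension of $K_Q^{(2)}$; this is precisely the inclusion $K_Q^{(2)}\subseteq L^{(2)}$ to which Lemma~\ref{GeneralInd} will be applied.

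Next I would identify the top horizontal map under this decomposition. The map \eqref{IndReductionWedgeMap} sends $\alpha_{i,j}\wedge\alpha_{k,l}$ to $\beta_i\wedge\beta_k$, which vanishes when $i=k$; hence it annihilates the summand $\mathrm{Ind}_K^{K_Q}\mathbb{F}_2$, and, after identifying the target $\wedge^2\mathrm{Ind}_K^{K_Q}\mathbb{F}_2$ with $\mathrm{Ind}_K^{K_Q^{(2)}}\mathbb{F}_2$, its restriction to $\mathrm{Ind}_K^{L^{(2)}}\mathbb{F}_2$ is exactly the map of the shape \eqref{GeneralMap} for $K_Q^{(2)}\subseteq L^{(2)}$, namely $\{\alpha_{i,j},\alpha_{k,l}\}\mapsto\{\beta_i,\beta_k\}$. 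Commutativity of the square then splits into two pieces. On the $\mathrm{Ind}_K^{K_Q}\mathbb{F}_2$-summand both composites vanish — clockwise because the top map vanishes there, counterclockwise because the bottom map first projects away the $K_Q^\times\otimes\mathbb{F}_2$-factor. On the $\mathrm{Ind}_K^{L^{(2)}}\mathbb{F}_2$-summand, commutativity is exactly the conclusion of Lemma~\ref{GeneralInd} applied to $K_Q^{(2)}\subseteq L^{(2)}$: it identifies the induced map on $H^1$, after Shapiro and Kummer, with $N_{L^{(2)}/K_Q^{(2)}}$, and this norm composed with the projection is by construction the bottom map of the statement. Reassembling the two summands gives the claimed diagram.

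The main obstacle is bookkeeping rather than mathematics: one has to verify that the Shapiro–Kummer identification furnished by Lemma~\ref{BK1Lemmas}(1) is the one entering both vertical maps and that it is additive along $K_f^{(2)}\simeq K_Q\times L^{(2)}$, and that the restriction of \eqref{IndReductionWedgeMap} to the second summand is \emph{literally} the map \eqref{GeneralMap} of Lemma~\ref{GeneralInd}, not merely isomorphic to it. One also needs the routine remark that Lemma~\ref{GeneralInd}, stated for finite extensions of fields, extends componentwise to finite extensions of \'{e}tale algebras, in line with the induction conventions fixed in the introduction. Beyond Lemmas~\ref{BK1Lemmas} and~\ref{GeneralInd}, no further input is required.
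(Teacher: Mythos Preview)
Your proposal is correct and follows essentially the same approach as the paper's own proof: both use the decomposition from Definition~\ref{L2def} to split into the $K_Q$-summand (where the map vanishes) and the $L^{(2)}$-summand (where Lemma~\ref{GeneralInd} supplies the norm description). The only cosmetic difference is that the paper handles the \'{e}tale-algebra case by explicitly decomposing $L^{(2)}=\prod_{i,j}L_{ij}$ and $K_Q^{(2)}=\prod_i K_i$ and applying Lemma~\ref{GeneralInd} to each field extension $L_{ij}/K_i$, whereas you fold this into the remark that the lemma extends componentwise.
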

\begin{proof}
By Definition \ref{L2def}, we have the isomorphism
\begin{align}\label{L2isom}
K_f^{(2),\times}\otimes \mathbb{F}_2\simeq K_Q^\times \otimes \mathbb{F}_2 \oplus L^{(2),\times}\otimes \mathbb{F}_2,
\end{align}
where the direct summand $K_Q^\times \otimes \mathbb{F}_2$ is obtained from the image of \eqref{Injection}. As a result, the map
\begin{align*}
K_Q^\times \otimes \mathbb{F}_2 \rightarrow K_Q^{(2),\times}\otimes \mathbb{F}_2
\end{align*}
induced by the pairs of roots of $f$ that reduce to the same root of $Q$ is trivial. In contrast, the pairs of roots reducing to distinct roots of $Q$ define a map 
\begin{align}\label{4to1map}
\mathrm{Ind}_K^{L^{(2)}}\mathbb{F}_2 \longrightarrow \mathrm{Ind}_K^{K_Q^{(2)}}\mathbb{F}_2.
\end{align}
If $L^{(2)}$ and $K_Q^{(2)}$ are fields, then it is clear that the reduction map \eqref{4to1map} is precisely of the form \eqref{GeneralMap}, and the result follows from Lemma \ref{GeneralInd}. Otherwise, if we decompose the \'{e}tale algebras as products of fields
\begin{align*}
L^{(2)}=\prod_{i,j} L_{ij}, \tab K_Q^{(2)}=\prod_i K_i,
\end{align*}
then the result follows by applying Lemma \ref{GeneralInd} to the maps
\begin{align*}
\mathrm{Ind}_K^{L_{ij}}\mathbb{F}_2 \rightarrow \mathrm{Ind}_K^{K_i}\mathbb{F},
\end{align*}
for each field extension $L_{ij}/K_i$. 
\end{proof}

\begin{remark}\label{DiffModels}
In the case where $X$ has a RWP, defining the above map is subtle as Lemma \ref{BK1Lemmas} and the isomorphism \eqref{OrdinaryWedge} require different models for $X$. Specifically, it is essential in the former that $f$ is of odd degree and thus $\deg Q <g+1$, whereas the latter requires $\deg Q =g+1$ due to the assumption of ordinary reduction at $2$. This subtlety is addressed below in \eqref{Q2ord}.
\end{remark}

\section{Descent in the Chabauty-Kim method}\label{SectionChabauty}
In this brief section, we recall the setup of the Chabauty-Kim method, as developed in \cite{kim2005motivic} and \cite{kim2009unipotent}. We then discuss the utility of $2$-descent for Bloch-Kato Selmer groups, as introduced in \cite{bk1} and \cite{bk2}, in deducing explicit finiteness criteria of depth 2 Chabauty-Kim sets. Although the Chabauty-Kim method applies in greater generality, we assume that $X$ is a smooth projective curve over $\mathbb{Q}$ of genus $g \geq 2$. 

Fix a prime $p$ of good reduction for $X$ and suppose we have a rational point $b\in X(\mathbb{Q})$. Denote by $U_n\coloneqq U_n(b)$ the maximal $n$-unipotent quotient of the $\mathbb{Q}_p$-pro-unipotent \'{e}tale fundamental group $\pi_1^{\mathbb{Q}_p}(X_{\overline{\mathbb{Q}}};b)$ of $X_{\overline{\mathbb{Q}}}$ based at $b$; for the definition of this fundamental group, see \cite[§10]{MR1012168}. Additionally, for a rational point $x\in X(\mathbb{Q})$, denote by $U_n(b,x)$ the path torsor obtained by pushing out the $\mathbb{Q}_p$-pro-unipotent \'{e}tale path torsor $\pi_1^{\mathbb{Q}_p}(X_{\overline{\mathbb{Q}}};b,x)$ along the quotient map. Finally, let $T$ be a finite set of primes that contains the primes of bad reduction for $X$ and the fixed prime $p$.

Central to the Chabauty-Kim method is the following commutative diagram:
\begin{center}
\begin{tikzcd}
X(\mathbb{Q}) \arrow{r}\arrow{d}{\kappa_n} & X(\mathbb{Q}_p) \arrow{d}{\kappa_{n,p}}\\
H^1(G_{\mathbb{Q},T},U_n) \arrow{r}{\mathrm{loc}_{n,p}} & H^1(\mathbb{Q}_p,U_n),
\end{tikzcd}
\end{center}
where $\kappa_n$, the \textit{pro-unipotent Kummer map}, assigns to a rational point $x\in X(\mathbb{Q})$ the corresponding path torsor $U_n(b,x)$ in $H^1(G_{\mathbb{Q},T},U_n)$; for any prime $l$, denote by $\kappa_{n,l}$ the local analogue of $\kappa_n$, mapping to $H^1(\mathbb{Q}_l,U_n)$. By \cite[Proposition 2]{kim2005motivic}, the non-abelian cohomology schemes $H^1(G_{\mathbb{Q},T},U_n)$ and $H^1(\mathbb{Q}_p,U_n)$ are isomorphic to affine $\mathbb{Q}_p$-schemes, such that the localisation map is a morphism of schemes. Moreover, the local \textit{Bloch-Kato Selmer scheme} $H^1_f(\mathbb{Q}_p,U_n)$, which parametrises crystalline $G_{\mathbb{Q}_p}$-equivariant $U_n$-torsors, is isomorphic to a $\mathbb{Q}_p$-subvariety of $H^1(\mathbb{Q}_p,U_n)$. Although the corresponding global object of interest has various definitions, we follow the variant introduced in \cite{Balakrishnan_2018}: denote by $\mathrm{Sel}(U_n)$ the \textit{Selmer variety}, defined as the subscheme of $H^1(G_{\mathbb{Q},T},U_n)$ parametrising classes $\xi \in H^1(G_{\mathbb{Q},T},U_n)$ for which
\begin{itemize}
    \item[(1)] $\mathrm{loc}_{n,l}(\xi)\in \kappa_{n,l}(X(\mathbb{Q}_l)) \textrm{ for all } l\neq p$;
    \item[(2)] $ \mathrm{loc}_{n,p}(\xi) \in H^1_f(\mathbb{Q}_p,U_n)$;
    \item[(3)] $\xi$ is mapped into the image of $J(\mathbb{Q})\otimes \mathbb{Q}_p\rightarrow H^1(G_{\mathbb{Q},T},V_pJ)$,
\end{itemize}
where $V_pJ$ denotes the $\mathbb{Q}_p$-rational Tate module of the Jacobian of $X$. Finally, define the \textit{depth $n$ Chabauty-Kim set} $X(\mathbb{Q}_p)_n$ as the set of $\mathbb{Q}_p$-rational points of $X$ that are mapped by $\kappa_{n,p}$ into the image of the Selmer variety. 

We emphasise that this overview of the Chabauty-Kim method is intentionally concise, as the technical details of the setup will not be explicitly utilised in the subsequent material. More detailed expositions can be found in any of the references on the Chabauty-Kim method mentioned thus far. Nevertheless, with this introduced notation, the following essential theorem will provide the desired finiteness criteria. 

\begin{theorem}[\texorpdfstring{\cite[Theorem 1]{kim2009unipotent}}{[Kim09, Theorem 1]}]
If the image of $\mathrm{Sel}(U_n)$ under the morphism $\mathrm{loc}_{n,p}$ is not dense in $H^1_f\big(\mathbb{Q}_p,U_n\big)$, then $X(\mathbb{Q}_p)_n$ is finite. 
\end{theorem}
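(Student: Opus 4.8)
The plan is to reduce the statement to the elementary fact that a nonzero $p$-adic analytic function on the compact space $X(\mathbb{Q}_p)$ has only finitely many zeros. First I would unwind the definitions: by construction $X(\mathbb{Q}_p)_n$ consists of those $x\in X(\mathbb{Q}_p)$ with $\kappa_{n,p}(x)$ lying in the set-theoretic image of $\mathrm{loc}_{n,p}$, so it is contained in $\kappa_{n,p}^{-1}(Z)$, where $Z\coloneqq\overline{\mathrm{loc}_{n,p}(\mathrm{Sel}(U_n))}$ denotes the Zariski closure of that image. By \cite[Proposition 2]{kim2005motivic} the scheme $H^1_f(\mathbb{Q}_p,U_n)$ is isomorphic to an affine space, hence irreducible, so the hypothesis that $Z$ is a proper closed subscheme yields a nonzero regular function $\alpha\in\mathcal{O}(H^1_f(\mathbb{Q}_p,U_n))$ vanishing on $Z$. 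Thus
\[
X(\mathbb{Q}_p)_n\subseteq\{\,x\in X(\mathbb{Q}_p):(\alpha\circ\kappa_{n,p})(x)=0\,\}.
\]

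The second step is to pin down the analytic nature of $\kappa_{n,p}$, and this is the technical heart. I would invoke the crystalline/de Rham comparison for the $\mathbb{Q}_p$-pro-unipotent fundamental group: it identifies $H^1_f(\mathbb{Q}_p,U_n)$ with the quotient $U_n^{\mathrm{dR}}/F^0$ of the de Rham realisation by its Hodge filtration, and under this identification $\kappa_{n,p}$ becomes the $p$-adic unipotent Albanese map of \cite{kim2009unipotent}, whose coordinates on each residue disc are iterated Coleman integrals of regular differentials on $X_{\mathbb{Q}_p}$. In particular, for every regular function $\phi$ on $H^1_f(\mathbb{Q}_p,U_n)$ the composite $\phi\circ\kappa_{n,p}$ is a Coleman function on $X(\mathbb{Q}_p)$, represented on each residue disc by a convergent power series in the local parameter. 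Applying this with $\phi=\alpha$ produces a Coleman function vanishing on $X(\mathbb{Q}_p)_n$.

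The third step is to check this function is not identically zero — in fact nonzero on every residue disc. This is where one uses that the relations forced on iterated integrals (shuffle relations, and those imposed by $F^0$) have already been absorbed into the definition of $H^1_f(\mathbb{Q}_p,U_n)$, so that the coordinates of $\kappa_{n,p}$ are algebraically independent as analytic functions on any residue disc; equivalently, $\kappa_{n,p}$ restricted to a residue disc has Zariski-dense image in $H^1_f(\mathbb{Q}_p,U_n)$, whence the nonzero regular function $\alpha$ cannot pull back to $0$ there. Granting this, the proof finishes formally: $X(\mathbb{Q}_p)$ is compact and decomposes as a finite disjoint union of residue discs, on each of which $\alpha\circ\kappa_{n,p}$ is a nonzero convergent power series and hence has finitely many zeros by Strassmann's theorem ($p$-adic Weierstrass preparation). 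Therefore $\{\alpha\circ\kappa_{n,p}=0\}$, and a fortiori $X(\mathbb{Q}_p)_n$, is finite.

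I expect the genuine obstacle to be the combination of the second and third steps: establishing that $\kappa_{n,p}$ is honestly Coleman-analytic with coordinates given by iterated integrals, and that those integrals are algebraically independent on residue discs so that the relevant pulled-back function does not vanish identically. Both facts are proved in \cite{kim2009unipotent} and rest on $p$-adic Hodge theory — specifically the comparison isomorphism realising the Bloch-Kato Selmer scheme as a quotient of the de Rham fundamental group — whereas the surrounding reduction to ``a nonzero analytic function has finitely many zeros'' is purely formal.
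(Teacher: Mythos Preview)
The paper does not actually prove this theorem: it is simply quoted as \cite[Theorem 1]{kim2009unipotent} and then immediately specialised to the $n=2$ corollary. So there is no ``paper's own proof'' to compare against here.

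That said, your proposal is a faithful sketch of Kim's original argument. The reduction to a nonzero Coleman function via a regular function $\alpha$ vanishing on the Zariski closure of the image, the identification of $H^1_f(\mathbb{Q}_p,U_n)$ with $U_n^{\mathrm{dR}}/F^0$ so that $\kappa_{n,p}$ becomes the unipotent Albanese map with iterated-integral coordinates, the Zariski-density of the image on each residue disc to ensure $\alpha\circ\kappa_{n,p}$ is not identically zero, and the finish via $p$-adic Weierstrass preparation on finitely many residue discs --- this is exactly the shape of the proof in \cite{kim2009unipotent}. You have also correctly identified where the real content lies: the comparison isomorphism and the algebraic independence of iterated integrals on residue discs are the substantive inputs, and both are established in Kim's paper rather than being formal.
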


This is the general theorem of Kim alluded to in the introduction; we restate the corresponding case for $n=2$ for ease of reference.

\begin{corollary}[\texorpdfstring{\cite[Lemma 25]{bk1}}{[Dog23, Lemma 25]}] \label{netancor}
Suppose 
\begin{align}\label{DimIneq}
\dim_{\mathbb{Q}_p} H_f^1(\mathbb{Q},\wedge^2 V_pJ) < \frac{1}{2}(3g-2)(g+1)-\mathrm{rk}\hspace{0.8mm} J(\mathbb{Q}).
\end{align}
Then $X(\mathbb{Q}_p)_2$ is finite. 
\end{corollary}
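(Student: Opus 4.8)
The plan is to deduce Corollary \ref{netancor} directly from the preceding Theorem of Kim, by showing that the dimension inequality \eqref{DimIneq} forces the localisation map $\mathrm{loc}_{2,p}\colon \mathrm{Sel}(U_2)\to H^1_f(\mathbb{Q}_p,U_2)$ to fail to have dense image. First I would recall the structure of $U_2$: there is a central extension $0\to \wedge^2 V_pJ \to U_2 \to V_pJ \to 0$ of $\mathbb{Q}_p$-pro-unipotent groups (modulo identifying the graded pieces of the lower central series of $\pi_1^{\mathbb{Q}_p}$ of a genus $g$ curve, where $\mathrm{gr}^1 = V_pJ \cong H^1_{\textrm{\'et}}(X_{\overline{\mathbb{Q}}},\mathbb{Q}_p)^\vee$ has dimension $2g$ and $\mathrm{gr}^2 = \wedge^2 V_pJ/\mathbb{Q}_p(1)$, the cokernel of the cycle class of the polarisation). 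This gives $\dim U_2 = 2g + \binom{2g}{2} - 1 = 2g + g(2g-1) - 1$.

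Next I would compute the two relevant dimensions. On the local side, $H^1_f(\mathbb{Q}_p,U_2)$ is a successive extension of $H^1_f(\mathbb{Q}_p,\mathrm{gr}^i)$, and by the local Bloch–Kato Euler characteristic formula (together with the fact that these $p$-adic representations are de Rham, so $\dim H^1_f(\mathbb{Q}_p,W) = \dim D_{\mathrm{dR}}(W)/\mathrm{Fil}^0 + \dim H^0(\mathbb{Q}_p,W)$) one gets $\dim H^1_f(\mathbb{Q}_p,U_2)$ equal to the sum of the Hodge numbers, which for the weight filtration on $U_2$ of a genus $g$ curve works out to $\tfrac12(3g-2)(g+1)$; I would cite the computation in \cite{kim2009unipotent} rather than reprove it. On the global side, $\mathrm{Sel}(U_2)$ sits in the fibration over the image of $J(\mathbb{Q})\otimes\mathbb{Q}_p$ in $H^1_f(\mathbb{Q},V_pJ)$ with fibres inside $H^1_f(\mathbb{Q},\wedge^2 V_pJ)$ (condition (3) pins down the $\mathrm{gr}^1$-component to the Mordell–Weil part, of dimension $\mathrm{rk}\,J(\mathbb{Q})$, and the remaining obstruction lives in the $\mathrm{gr}^2$-cohomology, bounded by $\dim_{\mathbb{Q}_p} H^1_f(\mathbb{Q},\wedge^2 V_pJ)$), so $\dim \mathrm{Sel}(U_2) \le \mathrm{rk}\,J(\mathbb{Q}) + \dim_{\mathbb{Q}_p} H^1_f(\mathbb{Q},\wedge^2 V_pJ)$.

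Then the argument concludes: the image of $\mathrm{Sel}(U_2)$ under a morphism of affine $\mathbb{Q}_p$-schemes has dimension at most $\dim \mathrm{Sel}(U_2) \le \mathrm{rk}\,J(\mathbb{Q}) + \dim_{\mathbb{Q}_p}H^1_f(\mathbb{Q},\wedge^2 V_pJ)$, which by the hypothesis \eqref{DimIneq} is strictly less than $\tfrac12(3g-2)(g+1) = \dim H^1_f(\mathbb{Q}_p,U_2)$; hence the image cannot be Zariski-dense, and Kim's theorem applies to give finiteness of $X(\mathbb{Q}_p)_2$.

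I expect the main obstacle — or rather the main thing to be careful about — to be the bookkeeping in the second paragraph: correctly identifying the graded pieces of $U_2$ (in particular the twist by $\mathbb{Q}_p(1)$ coming from the polarisation class, which is why it is $\wedge^2 V_pJ$ modulo a line and not $\wedge^2 V_pJ$ itself), verifying that conditions (1)–(3) defining $\mathrm{Sel}(U_2)$ genuinely cut the global $H^1$ down to the claimed dimension (this is exactly where condition (3) is used, and where one must know the relevant $H^2$-obstruction spaces vanish or are controlled), and matching the local Hodge-number count with the constant $\tfrac12(3g-2)(g+1)$. Since all of these are carried out in \cite{kim2009unipotent} and \cite{bk1}, the honest content of the proof is simply to assemble the dimension count and invoke the preceding theorem; I would keep the write-up short and point to those references for the two dimension computations.
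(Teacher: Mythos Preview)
Your proposal is correct and follows essentially the same route as the paper: the paper does not give a self-contained proof of this corollary but simply cites \cite[Lemma 25]{bk1}, and the paragraph following the statement sketches exactly the dimension count you describe (filtration of $U_2$ with graded pieces $V_pJ$ and $\overline{\wedge^2 V_pJ}$, comparison of $\dim\mathrm{Sel}(U_2)$ against $\dim H^1_f(\mathbb{Q}_p,U_2)$, then invoke Kim's theorem). One small point of bookkeeping you already flagged: the kernel in the central extension is $\overline{\wedge^2 V_pJ}=\mathrm{Coker}(\mathbb{Q}_p(1)\to\wedge^2 V_pJ)$, not $\wedge^2 V_pJ$ itself, and the passage from $H^1_f(\mathbb{Q},\overline{\wedge^2 V_pJ})$ to $H^1_f(\mathbb{Q},\wedge^2 V_pJ)$ uses the splitting $\wedge^2 V_pJ\simeq \mathbb{Q}_p(1)\oplus\overline{\wedge^2 V_pJ}$ together with $H^1_f(\mathbb{Q},\mathbb{Q}_p(1))=0$; this is implicit in the paper (and used explicitly later in the proof of Theorem~\ref{MainOddTheorem}).
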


This corollary guarantees that the \textit{dimension inequality} holds:
\begin{align*}
\dim_{\mathbb{Q}_p} \mathrm{Sel}(U_2)<\dim_{\mathbb{Q}_p} H^1_f(\mathbb{Q}_p,U_2),
\end{align*}
which suffices to conclude non-dominance of $\mathrm{loc}_{2,p}$. The dimensions of these Selmer schemes can be expressed in terms of the dimensions of Bloch-Kato Selmer groups associated to the graded pieces of the filtration of $U_2$ determined by the exact sequence 
\begin{align*}
0\longrightarrow \overline{\wedge^2 V_pJ}\longrightarrow U_2 \longrightarrow V_pJ \longrightarrow 0, 
\end{align*}
where $\overline{\wedge^2 V_pJ}\coloneqq \mathrm{Coker}\big(\mathbb{Q}_p(1)\xrightarrow{\cup^*} \wedge^2 V_pJ\big)$.
To establish the inequality in Corollary \ref{netancor}, it remains to compute the dimensions of the relevant Bloch-Kato Selmer groups, which is possible via well-known results in $p$-adic Hodge theory. 

As in the paper from which Corollary \ref{netancor} is taken, the strategy of the subsequent sections is to bound the dimension on the left-hand side of \eqref{DimIneq} by the $\mathbb{F}_p$-dimension of a related Galois cohomology group more amenable to computation. To this end, we require the following definition. 

\begin{definition}\label{BKSgroupT}
Suppose $K$ is a number field, $V$ is a $\mathbb{Q}_p$-vector space with a continuous action of $G_K$, and $T\subseteq V$ is a $G_K$-stable $\mathbb{Z}_p$-lattice. For a prime $v$ of $K$, denote by
\begin{align*}
\iota: H^1(K_v,T)\longrightarrow H^1(K_v,V)
\end{align*} 
the map induced by the inclusion $T\subseteq V$. Define $H^1_*(K_v,T)\coloneqq \iota^{-1}\big(H^1_*(K_v,V)\big)$, where $*\in\{f,g\}$. The global analogue $H^1_*(K,T)$ is then defined as 
\begin{align*}
H^1_*(K,T)\coloneqq \bigcap_v\mathrm{Ker}\big(H^1(K,T)\rightarrow H^1(K_v,T)/H^1_*(K_v,T)\big),
\end{align*}
where the intersection varies over the primes $v$ of $K$. Finally, define $H^1_*(K,T\otimes \mathbb{F}_p)$ as the image of $H^1_*(K,T)$ in $H^1(K,T\otimes \mathbb{F}_p)$. 
\end{definition}

Thus, to bound the dimension on the left-hand side of \eqref{DimIneq}, it suffices to bound the rank of $H^1_f(\mathbb{Q},\wedge^2 T_pJ)$, which, in turn, is bounded by the $\mathbb{F}_p$-dimension of $H^1_f(\mathbb{Q},\wedge^2 J[p])$. Hereafter, fix $p=2$, so that we may exploit the general results of Section \ref{SectionPrereqs} and results from explicit $2$-descent for Jacobians of hyperelliptic curves (as detailed in \cite{stoll2001implementing}, for example). A first upper bound for the desired $\mathbb{F}_2$-dimension is provided by the following lemma.

\begin{lemma}[ \texorpdfstring{\cite[Lemmas 17, 18]{bk1}}{[Dog23, Lemmas 17, 18]}]\label{unramified}
Suppose $S$ is a set of primes that contains $2$ and the primes where $X$ does not have semistable reduction. Then 
\begin{align}\label{SUnitBound}
H_f^1(\mathbb{Q},\wedge^2 J[2]) \subseteq H^1(G_S,\wedge^2 J[2]).
\end{align}
\end{lemma}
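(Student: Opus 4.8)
The plan is to show that every class in $H^1_f(\mathbb{Q},\wedge^2 J[2])$ is unramified outside $S$, i.e.\ that its restriction to the inertia group $I_v$ vanishes for every prime $v\notin S$. Since $\wedge^2 J[2]$ is a finite $G_{\mathbb{Q},T}$-module for $T$ the set of bad primes together with $2$, every class in $H^1_f(\mathbb{Q},\wedge^2 J[2])$ is automatically unramified outside $T$; so the content is to rule out ramification at the primes $v\in T\setminus S$, which are precisely the primes of multiplicative (semistable but bad) reduction for $X$, and where $v\nmid 2$. First I would reduce, via Definition \ref{BKSgroupT}, to the statement about $H^1_f(\mathbb{Q},\wedge^2 T_2 J)$: an element of $H^1_f(\mathbb{Q},\wedge^2 J[2])$ lifts to a class in $H^1_f(\mathbb{Q},\wedge^2 T_2 J)$, and it suffices to show the latter is unramified outside $S$, since the reduction map $H^1(\mathbb{Q},\wedge^2 T_2 J)\to H^1(\mathbb{Q},\wedge^2 J[2])$ carries the inertia-invariant (unramified) condition to the unramified condition.

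The key local input is the following: for $v\in T\setminus S$ (so $v\nmid 2$, $X$ has semistable but bad reduction at $v$), the local condition $H^1_f(\mathbb{Q}_v,\wedge^2 V_2 J)=H^1_{\mathrm{nr}}(\mathbb{Q}_v,\wedge^2 V_2 J)$ by definition (our convention for $v\nmid p$), so a class in $H^1_f(\mathbb{Q},\wedge^2 T_2 J)$ has local restriction at $v$ landing in $\iota^{-1}(H^1_{\mathrm{nr}}(\mathbb{Q}_v,\wedge^2 V_2 J))$. The point is then to show that $H^1_{\mathrm{nr}}(\mathbb{Q}_v,\wedge^2 T_2 J)$ — or rather the preimage of $H^1_{\mathrm{nr}}(\mathbb{Q}_v,\wedge^2 V_2 J)$ under $\iota$ — maps into the unramified subgroup of $H^1(\mathbb{Q}_v,\wedge^2 J[2])$. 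Concretely, I would analyze the action of $G_{\mathbb{Q}_v}$ on $T_2 J$ using semistability: there is a weight filtration $0\subseteq W_{-2}\subseteq W_{-1}\subseteq T_2 J$ with $W_{-2}$ (the toric part) a sum of copies of $\mathbb{Z}_2(1)$ on which inertia acts trivially, $W_{-1}/W_{-2}$ unramified, and $T_2 J/W_{-1}$ unramified; inertia acts through its tame (indeed through the $\ell$-adic, but here $2$-adic) quotient unipotently. From this one computes $I_v$-invariants and $I_v$-coinvariants of $\wedge^2 J[2]$ and checks — this is essentially the computation in \cite[Lemmas 17, 18]{bk1} which I am invoking — that the local cohomology classes cut out by the $f$-condition are in fact trivial on inertia. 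The cleanest way is: $H^1_f(\mathbb{Q}_v,\wedge^2 V_2 J)/H^1_{\mathrm{nr}}$ has a description via $D_{\mathrm{cris}}$-type data that forces triviality of inertial restriction after reduction mod $2$, or alternatively one checks that the group $H^1_{\mathrm{nr}}(\mathbb{Q}_v,\wedge^2 T_2 J)$ already surjects onto the unramified part mod $2$ because the relevant $H^1(I_v,-)$ groups have no $2$-torsion obstruction.

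The main obstacle I expect is the careful bookkeeping at the semistable-but-bad primes: passing between the $\mathbb{Z}_2$-lattice and its mod-$2$ reduction can a priori create spurious ramification (an unramified mod-$2$ class need not lift to an unramified integral class, and vice versa), so one must verify that for these particular modules $\wedge^2 T_2 J$ the torsion in $H^1(I_v,\wedge^2 T_2 J)$ and the relevant cokernels vanish, using the explicit semistable shape of $T_2 J$. Since this is exactly the content of \cite[Lemmas 17, 18]{bk1}, the proof here will be short: I would state that the inclusion follows immediately from those lemmas (the hypothesis on $S$ matches theirs verbatim), perhaps with a one-line reminder that $\wedge^2 J[2]$ is unramified outside $T$ and that at primes of $T\setminus S$ the $f$-local condition coincides with the unramified condition by our convention, so that a globally-$f$ class is unramified everywhere outside $S$.
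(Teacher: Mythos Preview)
The paper gives no proof of this lemma at all: it is stated with the citation \cite[Lemmas 17, 18]{bk1} in the header and nothing further. Your proposal ultimately lands in the same place, namely to invoke those lemmas directly, so in that sense you match the paper's ``proof'' exactly.

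Your surrounding sketch is broadly correct in spirit --- the content is at the semistable bad primes $v\in T\setminus S$, and the mechanism is the weight filtration on $T_2J$ at such $v$ together with careful bookkeeping between the lattice and its mod-$2$ reduction --- but one remark is muddled: you write that ``$H^1_f(\mathbb{Q}_v,\wedge^2 V_2 J)/H^1_{\mathrm{nr}}$ has a description via $D_{\mathrm{cris}}$-type data,'' which makes no sense for $v\nmid 2$ (and by the paper's convention this quotient is zero anyway). The honest obstacle you correctly flag is that $\iota^{-1}\bigl(H^1_{\mathrm{nr}}(\mathbb{Q}_v,\wedge^2 V_2 J)\bigr)$ need not consist of integrally unramified classes, and one must check that the torsion discrepancy vanishes for $\wedge^2 T_2J$ at semistable $v$; that is precisely what the cited lemmas do. For the purposes of this paper, a bare citation suffices.
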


The cohomology group $H^1(G_S,\wedge^2 J[2])$ is effectively computable (at least in the case that $X$ has a RWP) given Lemma \ref{BK1Lemmas} and effective computability of the cohomology groups of the form $H^1(G_{K,S},\mathbb{F}_2)$, as discussed in \cite[Proposition 12.6]{poonsch}, for instance. Specifically, if $K$ is a number field, then there exists the short exact sequence
\begin{align}\label{unramifiedF2}
0\longrightarrow \mathcal{O}_{K,S}^\times \otimes \mathbb{F}_2 \longrightarrow H^1(G_{K,S},\mathbb{F}_2) \longrightarrow \mathrm{Cl}(\mathcal{O}_{K,S})[2] \longrightarrow 0,
\end{align}
from which a basis of $H^1(G_{K,S},\mathbb{F}_2)$ can be computed given that a computer algebra can determine a basis for both $\mathcal{O}_{K,S}^\times$ and $\mathrm{Cl}(\mathcal{O}_{K,S})[2]$. If, instead, $K=\prod K_i$ is a finite \'{e}tale $\mathbb{Q}$-algebra, the analogous sequence holds if we define 
\begin{align*}
\mathcal{O}_{K,S}\coloneqq \prod_i \mathcal{O}_{K_i,S}, \tab \mathrm{Cl}(\mathcal{O}_{K,S})\coloneqq \prod_i \mathrm{Cl}(\mathcal{O}_{K_i,S}).
\end{align*}
In the case that $X$ has a RWP, the effectively computable dimension of $H^1(G_S,\wedge^2J[2])$ can be used to deduce finiteness of $X(\mathbb{Q}_2)_2$, via Corollary \ref{netancor}, for particular curves of genus higher than $2$, as illustrated by the following example. 
\begin{example}
Consider the hyperelliptic curve $X$, given by the Weierstrass model \eqref{WeierstrassModel}, where 
\begin{align*}
P(x)=x^7 + 3x^6 - 6x^4 - x^3 + 4x^2 - x, \tab Q(x)=1.
\end{align*}
This curve is from the genus $3$ \href{https://math.mit.edu/~drew/gce_genus3_hyperelliptic.txt}{database} of hyperelliptic curves, obtained using the methods of \cite{Genus3}. Using \texttt{Magma} \cite{Magma}, we verify that $X$ has good reduction away from $9835601$, where it has semistable reduction, and $\mathrm{Cl}(\mathbb{Q}_f^{(2)})=1$ for $f=4P+Q^2$. Therefore, Lemma \ref{BK1Lemmas}(2) and the exact sequence \eqref{unramifiedF2} imply 
\begin{align}\label{FirstExample}
H^1(G_S,\wedge^2 J[2])\simeq \mathrm{Ker}\big(\mathcal{O}_{\mathbb{Q}_f^{(2)}}[\tfrac{1}{2}]^\times \otimes \mathbb{F}_2 \rightarrow \mathcal{O}_{\mathbb{Q}_f}[\tfrac{1}{2}]^\times \otimes \mathbb{F}_2\big),
\end{align}
where we have taken $S=\{2\}$. We compute the bases of the $2$-units in the right-hand side of \eqref{FirstExample}, and compute that $H^1(G_S,\wedge^2 J[2])$ has dimension $10$. Finally, the function \texttt{RankBounds} in \texttt{Magma} computed the rank of $J(\mathbb{Q})$ to be $3$, and therefore $X(\mathbb{Q}_2)_2$ is finite by Corollary \ref{netancor} and Lemma \ref{unramified}. 
\end{example}
In contrast to the above example, there are no examples on the LMFDB of a genus $2$ curve for which finiteness of $X(\mathbb{Q}_2)_2$ can be deduced solely from Lemma \ref{unramified}. Similarly, the genus $3$ database contains no curves without a RWP for which finiteness can already be established. This is likely due to the fact that $\mathbb{Q}_f^{(2)}$ is $28$ dimensional over $\mathbb{Q}$ in this case. However, it is possible to get a crude upper bound for the relevant Bloch-Kato Selmer from the following lemma. 

\begin{lemma}\label{EvenLemmaGeneral}
Suppose that $X$ is a hyperelliptic curve of genus $g$ with no RWP, and denote by $S$ a nonempty set of primes. Then 
\begin{align*}
\mathrm{rk}\hspace{0.8mm} H^1(G_S,\wedge^2 T_2J) \leq \mathrm{rk}\hspace{0.8mm} H^1(G_S,\wedge^2 T_2 J_\mathfrak{m})-g.
\end{align*}
\end{lemma}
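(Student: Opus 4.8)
The plan is to compare the $S$-cohomology of $\wedge^2 T_2 J$ with that of $\wedge^2 T_2 J_\mathfrak{m}$ using the short exact sequence \eqref{GenJDefSES} relating $J_\mathfrak{m}$ to $J$ via the torus $\mathbb{G}_m(\chi_c)$. Passing to $2$-adic Tate modules, \eqref{GenJDefSES} yields a short exact sequence of $G_\mathbb{Q}$-modules
\begin{align*}
0\longrightarrow \mathbb{Z}_2(1)(\chi_c)\longrightarrow T_2 J_\mathfrak{m}\longrightarrow T_2 J\longrightarrow 0,
\end{align*}
where $\mathbb{Z}_2(1)(\chi_c)$ is the twist of $\mathbb{Z}_2(1)$ by the quadratic character $\chi_c$. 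Taking the second exterior power of this filtered module gives a two-step filtration on $\wedge^2 T_2 J_\mathfrak{m}$ with graded pieces $\wedge^2 T_2 J$ (the top quotient) and $\mathbb{Z}_2(1)(\chi_c)\otimes T_2 J$ (coming from the line $\mathbb{Z}_2(1)(\chi_c)$ wedged against $T_2 J$); note $\wedge^2$ of the rank-one piece vanishes. Concretely this is the exact sequence
\begin{align*}
0\longrightarrow \mathbb{Z}_2(1)(\chi_c)\otimes T_2 J\longrightarrow \wedge^2 T_2 J_\mathfrak{m}\longrightarrow \wedge^2 T_2 J\longrightarrow 0.
\end{align*}

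From here the argument is a routine diagram chase in the long exact sequence for $H^\bullet(G_S,-)$. The surjection $\wedge^2 T_2 J_\mathfrak{m}\twoheadrightarrow \wedge^2 T_2 J$ induces a map on $H^1(G_S,-)$ whose cokernel injects into $H^2\big(G_S,\mathbb{Z}_2(1)(\chi_c)\otimes T_2 J\big)$ and whose kernel is a quotient of $H^1\big(G_S,\mathbb{Z}_2(1)(\chi_c)\otimes T_2 J\big)$; hence
\begin{align*}
\mathrm{rk}\hspace{0.8mm} H^1(G_S,\wedge^2 T_2 J)\leq \mathrm{rk}\hspace{0.8mm} H^1(G_S,\wedge^2 T_2 J_\mathfrak{m})+\mathrm{rk}\hspace{0.8mm} H^1\big(G_S,\mathbb{Z}_2(1)(\chi_c)\otimes T_2 J\big).
\end{align*}
Wait — this inequality goes the wrong way. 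The correct move is to use the \emph{sub}object instead: the inclusion $\mathbb{Z}_2(1)(\chi_c)\otimes T_2 J\hookrightarrow \wedge^2 T_2 J_\mathfrak{m}$ shows that $H^1(G_S,\wedge^2 T_2 J)$ receives the quotient $H^1(G_S,\wedge^2 T_2 J_\mathfrak{m})/(\text{image of }H^1 \text{ of the sub})$, so that
\begin{align*}
\mathrm{rk}\hspace{0.8mm} H^1(G_S,\wedge^2 T_2 J_\mathfrak{m})-\mathrm{rk}\hspace{0.8mm} H^1\big(G_S,\mathbb{Z}_2(1)(\chi_c)\otimes T_2 J\big)\leq \mathrm{rk}\hspace{0.8mm} H^1(G_S,\wedge^2 T_2 J).
\end{align*}
That is still the wrong direction for the claimed bound. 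To get $\mathrm{rk}\hspace{0.8mm} H^1(G_S,\wedge^2 T_2 J)\leq \mathrm{rk}\hspace{0.8mm} H^1(G_S,\wedge^2 T_2 J_\mathfrak{m})-g$ one instead needs a lower bound of size $g$ on the rank of the image of $H^1(G_S,\mathbb{Z}_2(1)(\chi_c)\otimes T_2 J)$ inside $H^1(G_S,\wedge^2 T_2 J_\mathfrak{m})$, i.e. one must control the previous connecting map $H^0(G_S,\wedge^2 T_2 J)\to H^1(G_S,\mathbb{Z}_2(1)(\chi_c)\otimes T_2 J)$ and show it is not too large, while exhibiting a rank-$g$ source of classes. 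The cleanest route is to work with the generalised-Jacobian descent sequence \eqref{DefinitionEvenJm} and \eqref{DefinitionEvenWedge} directly: the inclusion $\mathbb{G}_m(\chi_c)[2]\hookrightarrow J_\mathfrak{m}[2]$ wedged with $J_\mathfrak{m}[2]$ lands inside $\wedge^2\mathrm{Ind}_K^{K_f}\mathbb{F}_2$, and a dimension count of the resulting subspace (whose $H^1$ over $G_S$ has rank at least $g$, since $\mathbb{G}_m(\chi_c)[2]\otimes J[2]$ has a $G_S$-invariant piece of the right size coming from the trivial factor of $\mathrm{Ind}_K^{K_f}\mathbb{F}_2$) produces the $-g$.

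The main obstacle, then, is the bookkeeping of the two connecting homomorphisms in the long exact sequence — verifying that the $H^0$-level connecting map is trivial or has bounded rank (analogous to the triviality argument in the proof of Lemma \ref{WedgeSqJm}), and simultaneously producing $g$ independent classes in $H^1(G_S)$ of the sub-object that survive into $H^1(G_S,\wedge^2 T_2 J_\mathfrak{m})$; this is where the semiabelian structure \eqref{GenJDefSES}, the nonsplitness of \eqref{DefinitionEvenJm}, and the nonemptiness hypothesis on $S$ all get used. The rest is formal: combine the two rank estimates, pass from $T_2$ to the mod-$2$ reductions exactly as in Definition \ref{BKSgroupT} and the discussion following Lemma \ref{unramified}, and conclude.
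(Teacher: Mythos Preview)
Your setup is correct and matches the paper exactly: passing \eqref{GenJDefSES} to Tate modules and taking $\wedge^2$ gives
\[
0\longrightarrow T_2J\otimes\mathbb{Z}_2(1)(\chi_c)\longrightarrow \wedge^2 T_2J_\mathfrak{m}\longrightarrow \wedge^2 T_2J\longrightarrow 0,
\]
and one then has to extract the rank inequality from the long exact sequence for $H^\bullet(G_S,-)$. But from that point on you are missing the two actual inputs that make the argument go through, and your proposed substitute does not supply them.

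First, the connecting map $H^0(G_S,\wedge^2 T_2J)\to H^1\big(G_S,T_2J(1)(\chi_c)\big)$ is not merely ``not too large'': it is zero, because $H^0(G_S,\wedge^2 T_2J)=0$ by the Weil conjectures (the Frobenius eigenvalues on $\wedge^2 V_2J$ have absolute value $p$ at any good prime $p$). This is what turns the long exact sequence into the clean bound
\[
\mathrm{rk}\,H^1(G_S,\wedge^2 T_2J)\leq \mathrm{rk}\,H^1(G_S,\wedge^2 T_2J_\mathfrak{m})-\big(\mathrm{rk}\,H^1(G_S,T_2J(1)(\chi_c))-\mathrm{rk}\,H^2(G_S,T_2J(1)(\chi_c))\big).
\]
Second, the quantity in parentheses is computed, not by producing explicit classes, but by the global Euler--Poincar\'e characteristic formula: since $H^0\big(G_S,V_2J(1)(\chi_c)\big)=0$ while $\dim H^0\big(\mathbb{R},V_2J(1)(\chi_c)\big)=g$, the formula gives $\mathrm{rk}\,H^1-\mathrm{rk}\,H^2=g$ directly. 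Your alternative route through the mod-$2$ sequences \eqref{DefinitionEvenJm}--\eqref{DefinitionEvenWedge} does not replace this: the claim that ``$\mathbb{G}_m(\chi_c)[2]\otimes J[2]$ has a $G_S$-invariant piece of the right size coming from the trivial factor of $\mathrm{Ind}_K^{K_f}\mathbb{F}_2$'' is not justified (there is no such trivial factor in $J[2]$ when $X$ has no RWP), and in any case $H^0$-invariants do not by themselves manufacture $g$ independent classes in $H^1(G_S,-)$ that survive into $H^1(G_S,\wedge^2 T_2J_\mathfrak{m})$. The nonemptiness of $S$ plays no role of the sort you suggest; it is simply part of the standing hypotheses for the Euler characteristic computation.
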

\begin{proof}
Considering the $2^n$-torsion in the exact sequence \eqref{GenJDefSES}, taking inverse limits, and then taking wedge squares yields the exact sequence 
\begin{align}\label{WedgeSqRelation}
0\longrightarrow T_2J(1)(\chi_c) \longrightarrow \wedge^2T_2J_\mathfrak{m} \longrightarrow \wedge^2 T_2J\longrightarrow 0, 
\end{align}
where $T_2J(1)(\chi_c)\coloneqq T_2J\otimes \mathbb{Z}_2(1)(\chi_c)$, and $\chi_c$ is defined as in \eqref{GenJDefSES}. By the Weil conjectures, we have that $H^0(G_S,\wedge^2T_2J)=0$, so the long exact sequence in cohomology associated to \eqref{WedgeSqRelation} allows the rank of $H^1(G_S,\wedge^2 T_2J)$ to be bounded above by 
\begin{align*}
\mathrm{rk}\hspace{0.8mm} H^1(G_S,\wedge^2T_2J_\mathfrak{m})-\Big(\mathrm{rk}\hspace{0.8mm} H^1\big(G_S,T_2J(1)(\chi_c)\big)-\mathrm{rk}\hspace{0.8mm}H^2\big(G_S,T_2J(1)(\chi_c)\big)\Big). 
\end{align*}
Given that $\dim H^0\big(G_S,V_2J(1)(\chi_c)\big)=0$ while $\dim H^0\big(\mathbb{R},V_2J(1)(\chi_c)\big)=g$, the result follows from the global Euler-Poincar\'{e} characteristic formula (see \cite[Remark 1.2.4]{FPR}).
\end{proof}

\begin{example}
Consider the hyperelliptic curve $X$ with no RWP, defined by the polynomials 
\begin{align*}
P(x)=-4x^6 - 4x^5 + 4x^4 + 5x^3 - 2x^2 - x, \tab Q(x)=x^4 + x^2 + 1,
\end{align*}
obtained from the genus $3$ database mentioned in Example \ref{FirstExample}. We compute, using \texttt{Magma}, that $X$ has semistable reduction away from $2$, and $\mathrm{Cl}(\mathcal{O}_{\mathbb{Q}_f^{(2)}})=1$ for $f=4P+Q^2$.  Therefore, the exact sequence \eqref{unramifiedF2} implies that 
\begin{align*}
\mathcal{A}(f,S)=\mathrm{Ker}\big(\mathcal{O}_{\mathbb{Q}_f^{(2)}}[\tfrac{1}{2}]^\times \otimes \mathbb{F}_2 \rightarrow \mathcal{O}_{\mathbb{Q}_f}[\tfrac{1}{2}]^\times \otimes \mathbb{F}_2\big),
\end{align*}
where $\mathcal{A}(f,S)$ is as defined in the introduction, and $S=\{2\}$. We compute bases for the relevant $2$-units and deduce that the dimension of $\mathcal{A}(f,S)$ is $22$. By Lemma \ref{WedgeSqJm}, we have the inclusion $H^1(G_S,\wedge^2 J_\mathfrak{m}[2])\subseteq \mathcal{A}(f,S)$, from which we deduce that the rank of $H^1(G_S,\wedge^2 T_2J_\mathfrak{m})$ is at most $22$. Finally, by Lemmas \ref{unramified} and \ref{EvenLemmaGeneral} we conclude that the dimension of $H^1_f(\mathbb{Q},\wedge^2 V_2J)$ is at most $19$. In contrast, the dimension of $H^1_f(G_S,\wedge^2 V_2J)$, conditional on the Bloch-Kato conjectures, is $5$ (see Section \ref{BKSubsection} below). The \texttt{RankBounds} function could only guarantee that the rank of $J(\mathbb{Q})$ is either $3$ or $4$, which, in either case, implies that the bound obtained is insufficient for deducing finiteness of $X(\mathbb{Q}_2)_2$ via Corollary \ref{netancor}. In Example \ref{LastExample} below, we provide a similar example where finiteness can be deduced, given the refinements discussed in the remainder of the paper. 
\end{example}

To refine the inclusion in \eqref{SUnitBound}, the strategy is roughly to obtain obstructions for an element in $H^1(G_S,\wedge^2 J[2])$ to lie in $H^1_f(\mathbb{Q}_2, \wedge^2 J[2])$ upon restriction to $G_{\mathbb{Q}_2}$. This is precisely the strategy of \cite{bk1}, which constructs $H^1_f(\mathbb{Q}_2,\wedge^2 J[2])$ via a non-abelian analogue of the explicit Kummer map (referred to as the $(x-T)$ map in \cite{poonsch}) from $2$-descent for Jacobians of hyperelliptic curves. Unfortunately, this construction is restrictive and has only been used to verify finiteness of the depth $2$ set in one example. In this paper, we focus on hyperelliptic curves with good ordinary reduction at $2$ to exploit the following description.
\begin{lemma}\label{MainDesc}
Suppose that $X$ is a hyperelliptic curve over $\mathbb{Q}_2$ with good ordinary reduction. Then 
\begin{align*}
H^1_g(\mathbb{Q}_2,\wedge^2V_2J)= \mathrm{Ker} \big(H^1(\mathbb{Q}_2,\wedge^2V_2J)\rightarrow H^1(\mathbb{Q}_2,\wedge^2V_2J_{\mathbb{F}_2})\big).
\end{align*}
\end{lemma}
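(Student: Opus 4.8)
The plan is to identify both sides of the claimed equality with $H^1_g$ and then directly compute. Recall that, by definition, $H^1_g(\mathbb{Q}_2,\wedge^2 V_2 J)$ is the kernel of the map $H^1(\mathbb{Q}_2,\wedge^2 V_2 J)\rightarrow H^1(\mathbb{Q}_2,\wedge^2 V_2 J\otimes B_{\mathrm{dR}})$. The key input is the good ordinary reduction hypothesis, which controls the $p$-adic Hodge theory of $V_2 J$ at $p=2$: the Tate module $V_2 J$ sits in an extension of an unramified representation by a twist of an unramified representation by the cyclotomic character. More precisely, for $J$ with good ordinary reduction at $2$ there is a $G_{\mathbb{Q}_2}$-stable exact sequence $0\rightarrow V^0\rightarrow V_2 J\rightarrow V^{\mathrm{\acute et}}\rightarrow 0$, where $V^{\mathrm{\acute et}}$ is unramified of dimension $g$, $V^0\cong V^{\mathrm{\acute et}'}(1)$ for another unramified $V^{\mathrm{\acute et}'}$ of dimension $g$, and moreover the reduction $V_2 J_{\mathbb{F}_2}$ of the special fibre is exactly $V^{\mathrm{\acute et}}$ (up to the standard identification). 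Taking $\wedge^2$ of this filtration gives a three-step filtration on $\wedge^2 V_2 J$ with graded pieces $\wedge^2 V^0$, $V^0\otimes V^{\mathrm{\acute et}}$, and $\wedge^2 V^{\mathrm{\acute et}}$; the last of these is precisely $\wedge^2 V_2 J_{\mathbb{F}_2}$.

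The first step is then to show that the map $\wedge^2 V_2 J\rightarrow \wedge^2 V_2 J_{\mathbb{F}_2}$ is, up to the above identification, the projection onto the top graded piece $\wedge^2 V^{\mathrm{\acute et}}$, i.e. the quotient by the sub-$G_{\mathbb{Q}_2}$-representation $W\coloneqq \mathrm{Fil}^1$ generated by $\wedge^2 V^0$ and $V^0\otimes V^{\mathrm{\acute et}}$. Next, I would show the kernel of $H^1(\mathbb{Q}_2,\wedge^2 V_2 J)\rightarrow H^1(\mathbb{Q}_2,\wedge^2 V_2 J_{\mathbb{F}_2})$ agrees with $H^1_g(\mathbb{Q}_2,\wedge^2 V_2 J)$. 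For the inclusion $H^1_g\subseteq \ker$: since $\wedge^2 V_2 J_{\mathbb{F}_2}$ is unramified (the Galois action factors through $G_{\mathbb{F}_2}$), we have $H^1_g(\mathbb{Q}_2,\wedge^2 V_2 J_{\mathbb{F}_2})=H^1(\mathbb{Q}_2,\wedge^2 V_2 J_{\mathbb{F}_2})$, hence a fortiori $H^1_f=H^1_g=$ everything is not quite what I want — rather, I use that $H^1_g$ is functorial and that the image of $H^1_g(\mathbb{Q}_2,\wedge^2 V_2 J)$ lands in $H^1_g(\mathbb{Q}_2,\wedge^2 V_2 J_{\mathbb{F}_2})$; but an unramified representation with no trivial quotient has $H^1_g=H^1_{\mathrm{nr}}$, so I need instead the sharper fact that $H^1(\mathbb{Q}_2,\wedge^2 V_2 J_{\mathbb{F}_2})$ is already an unramified (crystalline) class, so actually $H^1_g$ maps to $0$ — wait, this requires $H^1_g(\mathbb{Q}_2, \wedge^2 V_2J_{\mathbb{F}_2}) = 0$. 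That fails in general. The correct route is: a class in $H^1_g(\mathbb{Q}_2,\wedge^2 V_2 J)$ has trivial image in $H^1_g$ of the quotient, but the point is that the map on $H^1$ factors through the connecting map from $H^0(\mathbb{Q}_2,\wedge^2 V_2 J_{\mathbb{F}_2})$ only if... Let me restructure: the real content is that $\ker\big(H^1(\mathbb{Q}_2,\wedge^2 V_2 J)\rightarrow H^1(\mathbb{Q}_2,\wedge^2 V_2 J_{\mathbb{F}_2})\big)$ is the image of $H^1(\mathbb{Q}_2,W)$, and one shows this image coincides with $H^1_g(\mathbb{Q}_2,\wedge^2 V_2 J)$ by checking (a) every class from $W$ is in $H^1_g$, because $W$ has Hodge–Tate weights $\geq 1$ so $D_{\mathrm{dR}}(W)/\mathrm{Fil}^0 = D_{\mathrm{dR}}(W)$ and a weight argument forces $H^1(\mathbb{Q}_2,W)=H^1_g(\mathbb{Q}_2,W)$ (indeed $H^1_g/H^1_f$ and $H^1_e$ comparisons via the fundamental exact sequence of Bloch–Kato give $H^1(\mathbb{Q}_2,W)=H^1_g(\mathbb{Q}_2,W)$ when $W$ has all Hodge–Tate weights $\geq 1$ — one must check $H^0(\mathbb{Q}_2, W^*(1))$ behaves, using that $W^*(1)$ has weights $\leq 0$); and (b) conversely $H^1_g(\mathbb{Q}_2,\wedge^2 V_2 J)\subseteq \mathrm{im}\,H^1(\mathbb{Q}_2,W)$, because the quotient $\wedge^2 V_2 J_{\mathbb{F}_2}$ is unramified with Hodge–Tate weight $0$, so $H^1_g(\mathbb{Q}_2,\wedge^2 V_2 J)$ maps into $H^1_f(\mathbb{Q}_2,\wedge^2 V_2 J_{\mathbb{F}_2})=H^1_{\mathrm{nr}}$, and then a further comparison of $D_{\mathrm{cris}}$-dimensions — using $\varphi$-eigenvalue/weight-monodromy facts for good ordinary reduction — shows this forces the image to be $0$. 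I would carry out (b) by the dimension count: $\dim H^1_g(\mathbb{Q}_2,\wedge^2 V_2 J) = \dim H^1(\mathbb{Q}_2,W)$ follows once $H^0(\mathbb{Q}_2,\wedge^2 V_2 J_{\mathbb{F}_2})$ and the relevant $H^2$'s are pinned down by local Tate duality and Euler characteristics, together with the injectivity of $H^1(\mathbb{Q}_2,W)\rightarrow H^1(\mathbb{Q}_2,\wedge^2 V_2 J)$ coming from $H^0(\mathbb{Q}_2,\wedge^2 V_2 J_{\mathbb{F}_2})$ being accounted for.

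Concretely, I would organise the write-up as follows. First, record the ordinary filtration $0\to W\to\wedge^2 V_2 J\to \wedge^2 V_2 J_{\mathbb{F}_2}\to 0$ and observe all Hodge–Tate weights of $W$ are $\geq 1$ while $\wedge^2 V_2 J_{\mathbb{F}_2}$ is unramified with weight $0$. Second, invoke the Bloch–Kato comparison $H^1(\mathbb{Q}_2,W) = H^1_g(\mathbb{Q}_2,W)$, valid because $W(1)^*$ has non-positive Hodge–Tate weights (so $D_{\mathrm{dR}}(W(1)^*)=\mathrm{Fil}^0$, whence the relevant $H^0$ vanishes and the exact sequence of \cite{bloch1990functions} collapses). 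Third, use the long exact cohomology sequence: $H^0(\mathbb{Q}_2,\wedge^2 V_2 J_{\mathbb{F}_2})\to H^1(\mathbb{Q}_2,W)\to H^1(\mathbb{Q}_2,\wedge^2 V_2 J)\to H^1(\mathbb{Q}_2,\wedge^2 V_2 J_{\mathbb{F}_2})$, so $\ker$ of the last map is the image of $H^1(\mathbb{Q}_2,W)$; by functoriality of $H^1_g$ and step two, this image lies in $H^1_g(\mathbb{Q}_2,\wedge^2 V_2 J)$. Fourth, for the reverse inclusion, take $\xi\in H^1_g(\mathbb{Q}_2,\wedge^2 V_2 J)$; its image $\bar\xi$ in $H^1(\mathbb{Q}_2,\wedge^2 V_2 J_{\mathbb{F}_2})$ lies in $H^1_g(\mathbb{Q}_2,\wedge^2 V_2 J_{\mathbb{F}_2})$, and since that module is unramified with weight $0$ we have $H^1_g = H^1 = H^1(G_{\mathbb{F}_2},-)\oplus(\text{wild part})$ — here I need the sharper statement that $\bar\xi = 0$, which I would get from comparing $\dim_{\mathbb{Q}_2} H^1_g$ on both ends via the formula $\dim H^1_g(\mathbb{Q}_2,V) = \dim H^0(\mathbb{Q}_2,V) + \dim(D_{\mathrm{dR}}(V)/\mathrm{Fil}^0) + \dim H^0(\mathbb{Q}_2,V^*(1))$ applied to $V=\wedge^2 V_2 J$, $W$, and $\wedge^2 V_2 J_{\mathbb{F}_2}$; additivity in the exact sequence then forces the connecting contributions to vanish and yields $\bar\xi=0$.

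\medskip

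\noindent\textbf{Main obstacle.} The crux is step four: proving that the image of $H^1_g(\mathbb{Q}_2,\wedge^2 V_2 J)$ in $H^1(\mathbb{Q}_2,\wedge^2 V_2 J_{\mathbb{F}_2})$ is exactly zero, not merely contained in the unramified subspace. This requires genuinely using good ordinary reduction beyond the mere existence of the filtration — specifically, that the $\varphi$-eigenvalues on $D_{\mathrm{cris}}$ of the unit-root part are Weil numbers of the correct weight, so that no unramified extension class sneaks into $H^1_g$ without already coming from $W$. I expect to discharge this by a clean dimension count using the Bloch–Kato formula for $\dim H^1_g$ together with local Tate duality, rather than by manipulating cocycles; the bookkeeping of Hodge–Tate weights and $H^0$'s of the various twists (checking e.g. $H^0(\mathbb{Q}_2, (\wedge^2 V_2 J)^*(1))$ and $H^0(\mathbb{Q}_2, W^*(1))$ agree, which holds since the weight-$0$ quotient contributes nothing to either) is where care is needed, but it should be routine once the filtration and weights are in place.
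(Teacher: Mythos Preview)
Your approach is essentially the paper's: set up the ordinary filtration $0\to W\to \wedge^2 V_2J\to \wedge^2 V_2J_{\mathbb{F}_2}\to 0$, use $\mathrm{Fil}^0 D_{\mathrm{dR}}(W)=0$ to get $H^1(\mathbb{Q}_2,W)=H^1_g(\mathbb{Q}_2,W)$, and then argue that $H^1_g(\mathbb{Q}_2,\wedge^2 V_2J)$ dies in $H^1(\mathbb{Q}_2,\wedge^2 V_2J_{\mathbb{F}_2})$. Your ``main obstacle'' dissolves more quickly than your proposed three-term dimension comparison: the paper simply observes
\[
\dim H^1_g(\mathbb{Q}_2,\wedge^2 V_2J_{\mathbb{F}_2})=\dim D_{\mathrm{cris}}(\wedge^2 V_2J_{\mathbb{F}_2})^{\varphi=1}=\dim H^0(\mathbb{Q}_2,\wedge^2 V_2J_{\mathbb{F}_2})=0,
\]
the last vanishing because the Frobenius eigenvalues on $\wedge^2 V_2J_{\mathbb{F}_2}$ are products of pairs of unit-root Weil numbers of weight $1$, hence of archimedean absolute value $2\neq 1$---exactly the Weil-number input you anticipated, applied directly to the quotient rather than spread across the filtration.
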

\begin{proof}
This is an instance of a general result (see \cite[§2.1.6(ii)]{NekPla}, for example) concerning Bloch-Kato Selmer groups of \textit{ordinary representations}. Regardless, we outline the proof here for completeness. 

The assumption of good ordinary reduction ensures that there exists the following short exact sequence of $\mathbb{Z}_2[G_{\mathbb{Q}_2}]$-modules:
\begin{align*}
0 \longrightarrow (V_2J_{\mathbb{F}_2})^*(1) \longrightarrow V_2J \longrightarrow V_2J_{\mathbb{F}_2} \longrightarrow 0,
\end{align*}
where the action of $G_{\mathbb{Q}_2}$ on $V_2J_{\mathbb{F}_2}$ is unramified. This induces a filtration $F^\bullet$ on $\wedge^2 V_2J$ by subrepresentations stable under the action of $G_{\mathbb{Q}_2}$, with graded pieces 
\begin{align*}
F^0/F^1 & = \wedge^2 V_2J_{\mathbb{F}_2}, \\ F^1/F^2  &= (V_2J_{\mathbb{F}_2}\otimes (V_2J_{\mathbb{F}_2})^*)(1), \\ F^2/F^3 &= (\wedge^2 V_2J_{\mathbb{F}_2})^*(2).
\end{align*}
If we denote by $W=F^1$ the kernel of $\wedge^2 V_2J\rightarrow \wedge^2 V_2J_{\mathbb{F}_2}$, it follows directly from the Galois action on the graded pieces above that
\begin{align*}
\mathrm{Fil}^0D_{\mathrm{dR}}(W) =D_{\mathrm{dR}}(\wedge^2 V_2J_{\mathbb{F}_2})/\mathrm{Fil}^0 = 0.
\end{align*} 
Consequently, the sequence 
\begin{align}\label{H1gSES}
H^1(\mathbb{Q}_2,W)\longrightarrow H^1_g(\mathbb{Q}_2, \wedge^2 V_2 J) \longrightarrow H^1_g(\mathbb{Q}_2,\wedge^2 V_2J_{\mathbb{F}_2})
\end{align}
is exact. However, the dimension of the right-most Selmer group can be deduced via the following series of equalities:
\begin{align*}
\dim_{\mathbb{Q}_2} H^1_g(\mathbb{Q}_2,\wedge^2 V_2J_{\mathbb{F}_2})=\dim_{\mathbb{Q}_2} D_{\mathrm{cris}}(\wedge^2 V_2J_{\mathbb{F}_2})^{\varphi =1} = \dim_{\mathbb{Q}_2} H^0(\mathbb{Q}_2,\wedge^2 V_2J_{\mathbb{F}_2}).
\end{align*}
Since $H^0(\mathbb{Q}_2,\wedge^2 V_2J_{\mathbb{F}_2})$ is trivial,  the exactness of \eqref{H1gSES} implies that
\begin{align*}
H^1_g(\mathbb{Q}_2,\wedge^2 V_2J)=\mathrm{Im}\big(H^1(\mathbb{Q}_2,W)\rightarrow H^1(\mathbb{Q}_2,\wedge^2 V_2J)\big),
\end{align*}
as desired.
\end{proof}

\begin{remark}\label{fDiffg}
The difference between $H^1_f(\mathbb{Q}_p,V)$ and $H^1_g(\mathbb{Q}_p,V)$, for a Galois $\mathbb{Q}_p$-representation $V$, is dictated by the following short exact sequence (see, for instance, \cite[Corollary 1.18]{Nekov2002OnPH}):
\begin{align}\label{DimDifffg}
0\longrightarrow H^1_f(\mathbb{Q}_p,V)\longrightarrow H^1_g(\mathbb{Q}_p,V)\longrightarrow \big(D_{\mathrm{cris}}(V^*(1))\big)^{\varphi =1}\longrightarrow 0.
\end{align}
Setting $p=2$ and $V=(V_2J)^{\otimes 2}$, this sequence, and the crystalline comparison theorem of Faltings \cite{FaltingsComparison}, imply that
\begin{align}\label{TensorSq}
\begin{split}
\dim_{\mathbb{Q}_p}H^1_g(\mathbb{Q}_2,( V_2 J)^{\otimes 2})-&\dim_{\mathbb{Q}_p}H^1_f(\mathbb{Q}_2,( V_2 J)^{\otimes 2})\\&= \dim \mathrm{End}_\varphi (H^1_{\mathrm{cris}}(X_{\mathbb{F}_2}/W(\mathbb{F}_2))[\tfrac{1}{2}]).    
\end{split}
\end{align}
Replacing $(V_2J)^{\otimes 2}$ with $\wedge^2 V_2J$, the difference in dimensions of the corresponding Selmer groups is the dimension of the alternating summand $\wedge^2 H^1_{\mathrm{cris}}(X_{\mathbb{F}_2},W(\mathbb{F}_2))[\tfrac{1}{2}](1)$ of the endomorphism group in \eqref{TensorSq}. This latter dimension can be determined via a crystalline analogue of Tate's isogeny theorem (see \cite[Theorems 5 and 6]{MilneWaterhouse}, for example). Indeed, the map 
\begin{align*}
\mathrm{End}(J_{\mathbb{F}_2})\otimes \mathbb{Z}_2\rightarrow  \mathrm{End}_\varphi (H^1_{\mathrm{cris}}(X_{\mathbb{F}_2}/W(\mathbb{F}_2))[\tfrac{1}{2}])
\end{align*}
is an isomorphism, under which the preimage of the above alternating summand is identified with the N\'{e}ron-Severi group of $J_{\mathbb{F}_2}$. In particular, 
\begin{align*}
\dim_{\mathbb{Q}_p}H^1_g(\mathbb{Q}_2,\wedge^2V_2 J)-&\dim_{\mathbb{Q}_p}H^1_f(\mathbb{Q}_2,\wedge^2V_2 J)=\rho(J_{\mathbb{F}_2})\geq 1,
\end{align*}
where $\rho(J_{\mathbb{F}_2})$ denotes the Picard number of $J_{\mathbb{F}_2}$.
\end{remark}

\section{Effective bounds and finiteness criteria}\label{SectionCriteria}
\subsection{Case 1: Hyperelliptic curves with a rational Weierstrass point}\label{Section1WP}
Throughout this subsection, we assume that $X$ is a hyperelliptic curve with precisely one RWP, given by the Weierstrass model \eqref{WeierstrassModel}. Applying the theory discussed in Section \ref{SectionPrereqs}, a finiteness criterion via Corollary \ref{netancor} follows straightforwardly. However, we can further strengthen this by including the methods of \cite{bk2}. As such, we first briefly outline the relevant theory from this paper. 

\subsubsection{Local obstructions at $2$ and $\infty$ via explicit boundary maps}\label{SSectionBoundary}
In this digression, we do not impose that $X$ has good ordinary reduction at $2$, though we certainly require that it has exactly one RWP. In \cite{bk2}, the inclusion in \eqref{SUnitBound} is refined by considering those classes in $H^1(G_S,\wedge^2 J[2])$ which lie in the image of the map 
\begin{align*}
H^1(\mathbb{Q}_v,\wedge^2 J[4]) \longrightarrow H^1(\mathbb{Q}_v,\wedge^2 J[2]),
\end{align*}
associated to the short exact sequence 
\begin{align}\label{BoundarySES}
0\longrightarrow \wedge^2 J[2] \longrightarrow \wedge^2 J[4] \longrightarrow \wedge^2 J[2] \longrightarrow 0,
\end{align}
for $v\in \{2,\infty\}$. This is achieved using the explicit field-theoretic description of $H^1(\mathbb{Q}_v,\wedge^2 J[2])$ in part (2) of Lemma \ref{BK1Lemmas}, along with explicitly describing the corresponding boundary map 
\begin{align}\label{BoundaryMapWedge}
H^1(\mathbb{Q}_v,\wedge^2 J[2]) \longrightarrow H^2(\mathbb{Q}_v,\wedge^2 J[2]),
\end{align}
associated to the sequence \eqref{BoundarySES}; this is given in \cite[Proposition 7]{bk2} as a map from the \'{e}tale algebra $\mathbb{Q}_{v,f}^{(2)}$. For $v\in\{2,\infty\}$, we denote by $\theta_v$ the composition 
\begin{align}\label{Thetas}
H^1(G_S,\wedge^2 J[2])\longrightarrow \mathrm{Ker}\big(\mathbb{Q}_{v,f}^{(2),\times} \otimes \mathbb{F}_2 \rightarrow \mathbb{Q}_{v,f}^\times \otimes \mathbb{F}_2\big) \longrightarrow \mathrm{Br}(\mathbb{Q}_{v,f}^{(2)})[2],
\end{align}
where this final map is the aforementioned explicit boundary map in \eqref{BoundaryMapWedge} via the field-theoretic description afforded by Lemma \ref{BK1Lemmas}. By construction, we have the inclusions 
\begin{align}\label{BoundaryInclusion}
H^1_f(\mathbb{Q},\wedge^2 J[2]) \subseteq \mathrm{Ker}\hspace{0.8mm}\theta_2 \cap \mathrm{Ker} \hspace{0.8mm} \theta_\infty \subseteq H^1(G_S,\wedge^2J[2]). 
\end{align}
The maps $\theta_2$ and $\theta_\infty$ are computable, and \cite[Theorem 1]{bk2} demonstrates the effectiveness of computing the intersection in \eqref{BoundaryInclusion} in deducing finiteness of $X(\mathbb{Q}_2)_2$ via Corollary \ref{netancor}. Specifically, it shows the desired finiteness can be established by this computation for the majority of genus $2$ odd hyperelliptic curves on the LMFDB with Mordell-Weil rank $2$.

\subsubsection{Statement of the main theorem in the odd degree case}
Suppose that $X$ has good ordinary reduction at $2$ and is given by the Weierstrass model \eqref{WeierstrassModel}. In the case of interest for this subsection, as noted in Remark \ref{DiffModels}, we require $\deg Q < g + 1$ to explicitly describe $H^1(\mathbb{Q},\wedge^2 J[2])$. To distinguish between the two cases considered in this paper, whenever $X$ is given by an odd degree model define $Q_{\beta}(x)=(x-\beta)^3Q\big(\tfrac{1}{x-\beta}\big)$, where $\beta \in \mathbb{Z}$ does not reduce to a root of $\overline{Q}\in \mathbb{F}_2[x]$. Then define 
\begin{align}\label{Q2ord}
\mathbb{Q}_2^{\mathrm{ord}}\coloneqq \begin{cases}
(\mathbb{Q}_2)_{Q}^{(2)}, &\text{if $\deg Q=g+1$}; \\[6pt]
(\mathbb{Q}_2)_{Q_{\beta}}^{(2)}, &\text{otherwise}.
\end{cases}
\end{align}
The effectively computable map essential in obtaining the effective bounds and finiteness criteria of this paper is then defined as follows. 

\begin{definition}\label{DefThetaDr}
Suppose we have the decomposition
\begin{align}\label{DecompQ2ord}
\mathbb{Q}_2^{\mathrm{ord},\times}\otimes \mathbb{F}_2\simeq \bigoplus_{i} K_i^\times \otimes \mathbb{F}_2,
\end{align}
where each $K_i$ is a finite extension of $\mathbb{Q}_2$. Denote by $\xi_i\in K_i^\times \otimes \mathbb{F}_2$ the class which defines the unramified quadratic extension of $K_i$. Then define 
\begin{align*}
\theta_{\mathrm{dR}}&:\mathrm{Ker}\Big(H^1\big(G_{\mathbb{Q}_f^{(2)},S},\mathbb{F}_2\big)\rightarrow H^1\big(G_{\mathbb{Q}_f,S},\mathbb{F}_2\big)\Big) \longrightarrow \bigoplus_i \frac{K_i^\times \otimes \mathbb{F}_2}{\langle \xi_i \rangle},
\end{align*}
as the map induced by the composition 
\begin{align*}
\mathbb{Q}_f^{(2),\times}\otimes \mathbb{F}_2 \rightarrow \mathbb{Q}_{2,f}^{(2),\times}\otimes \mathbb{F}_2\rightarrow \mathbb{Q}_2^{\mathrm{ord},\times}\otimes \mathbb{F}_2,
\end{align*}
where the final map is as defined in Lemma \ref{ExplicitMap}. 
\end{definition}

To finally relate $\theta_{\mathrm{dR}}$ to bounding the desired Selmer groups, we require the following lemma. It is then possible to state the main theorem for hyperelliptic curves with one RWP, before stating the corresponding finiteness criterion as an immediate corollary.

\begin{lemma}\label{unramtorsion}
The image of $2$-torsion under the composition
\begin{align}\label{unramMAp}
H^1(\mathbb{Q}_2,\wedge^2 T_2J _{\mathbb{F}_2}) \rightarrow H^1(\mathbb{Q}_2,\wedge^2 J_{\mathbb{F}_2}[2])\rightarrow \bigoplus_i K_i^\times \otimes \mathbb{F}_2
\end{align}
is the sum $\bigoplus_i\langle \xi_i \rangle $, where $\xi_i\in K_i^\times \otimes \mathbb{F}_2$ is defined as in Definition \ref{DefThetaDr}. 
\end{lemma}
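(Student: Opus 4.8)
The plan is to track the image of the $2$-torsion subgroup $H^1(\mathbb{Q}_2,\wedge^2 J_{\mathbb{F}_2}[2])$ of $H^1(\mathbb{Q}_2,\wedge^2 T_2J_{\mathbb{F}_2})$ — since $\wedge^2 J_{\mathbb{F}_2}[2]$ is the mod-$2$ reduction of $\wedge^2 T_2 J_{\mathbb{F}_2}$, the $2$-torsion of the latter cohomology group is generated by the image of the former — through the explicit field-theoretic identification. First I would use the isomorphism \eqref{OrdinaryWedge}, which realizes $H^1(\mathbb{Q}_2,\wedge^2 J_{\mathbb{F}_2}[2])$ as a subgroup of $(\mathbb{Q}_2)_Q^{(2),\times}\otimes\mathbb{F}_2$, matching the target $\bigoplus_i K_i^\times\otimes\mathbb{F}_2$ of \eqref{unramMAp} via the decomposition \eqref{DecompQ2ord} of $\mathbb{Q}_2^{\mathrm{ord}} = (\mathbb{Q}_2)_Q^{(2)}$ into fields (in the $\deg Q=g+1$ case; the twisted model $Q_\beta$ case is handled identically since it only reshuffles the roots). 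The key point is that the action of $G_{\mathbb{Q}_2}$ on $\wedge^2 J_{\mathbb{F}_2}[2]$ factors through $G_{\mathbb{F}_2}$ — because $\overline Q$ is separable of degree $g+1$ over $\mathbb{F}_2$, all roots $\beta_i$ lie in $\mathbb{Q}_2^{nr}$ — so the representation $\wedge^2 J_{\mathbb{F}_2}[2]$ is unramified.

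The core of the argument is then to identify, under the Kummer-theory isomorphism, the image of an unramified cohomology class. For an unramified $G_{\mathbb{Q}_2}$-module $M$ which is a permutation module $\mathrm{Ind}_{\mathbb{Q}_2}^{A}\mathbb{F}_2$ for an unramified étale algebra $A = \prod_i K_i$, Shapiro's lemma plus Kummer theory identifies $H^1(\mathbb{Q}_2, M)$ with $\bigoplus_i K_i^\times\otimes\mathbb{F}_2$, and the unramified subgroup $H^1_{\mathrm{nr}}(\mathbb{Q}_2,M) = H^1(G_{\mathbb{F}_2}, M)$ corresponds exactly to $\bigoplus_i \langle \xi_i\rangle$, where $\xi_i$ is the class of the unramified quadratic extension of $K_i$ (equivalently, the image of a uniformizer, or the nontrivial class in the cyclic group $H^1(G_{\mathbb{F}_2},\mathbb{F}_2)\simeq\mathbb{F}_2$ after Shapiro). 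Now I must check two things: (a) the full module $\wedge^2 J_{\mathbb{F}_2}[2]$, while not literally of the form $\mathrm{Ind}_{\mathbb{Q}_2}^{K_Q^{(2)}}\mathbb{F}_2$, sits inside $\wedge^2\mathrm{Ind}_{\mathbb{Q}_2}^{K_Q}\mathbb{F}_2 \simeq \mathrm{Ind}_{\mathbb{Q}_2}^{K_Q^{(2)}}\mathbb{F}_2$ (Lemma \ref{BK1Lemmas}(1)) via the proof of \eqref{OrdinaryWedge}, and (b) since $H^0(\mathbb{Q}_2,\wedge^2 J_{\mathbb{F}_2}[2])=0$ (it maps injectively under reduction into $\wedge^2 J_{\mathbb{F}_2}[2]$ over $\overline{\mathbb{F}}_2$, where it is a sum of pairs of distinct roots — but the Frobenius-invariants here vanish, as used in the proof of Lemma \ref{MainDesc}), the inclusion $\wedge^2 J_{\mathbb{F}_2}[2]\hookrightarrow \mathrm{Ind}_{\mathbb{Q}_2}^{K_Q^{(2)}}\mathbb{F}_2$ induces an injection on unramified $H^1$'s compatibly with the surjection onto the rest; hence the unramified part of $H^1(\mathbb{Q}_2,\wedge^2 J_{\mathbb{F}_2}[2])$ is the preimage of $\bigoplus_i\langle\xi_i\rangle$, and its image in $\bigoplus_i K_i^\times\otimes\mathbb{F}_2$ is exactly $\bigoplus_i\langle\xi_i\rangle$. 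Finally I would invoke that for an unramified representation $V$ over $\mathbb{Q}_2$, $H^1_f(\mathbb{Q}_2,V)=H^1_{\mathrm{nr}}(\mathbb{Q}_2,V)$ — so in particular the image of $2$-torsion of $H^1(\mathbb{Q}_2,\wedge^2 T_2 J_{\mathbb{F}_2})$ coincides with the image of $H^1_{\mathrm{nr}}(\mathbb{Q}_2,\wedge^2 J_{\mathbb{F}_2}[2])$, which is $\bigoplus_i\langle\xi_i\rangle$.

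I expect the main obstacle to be the bookkeeping in step (a)–(b): carefully checking that the field-theoretic description \eqref{OrdinaryWedge} is compatible with the target $\bigoplus_i K_i^\times\otimes\mathbb{F}_2$ appearing in \eqref{unramMAp}, i.e. that the decomposition \eqref{DecompQ2ord} of $\mathbb{Q}_2^{\mathrm{ord},\times}\otimes\mathbb{F}_2$ is precisely the Kummer-theoretic realization of the middle term of \eqref{unramMAp} (not some subquotient), and that "the class $\xi_i$ defining the unramified quadratic extension" in Definition \ref{DefThetaDr} is the same element as the image of the unramified cohomology class under Shapiro plus Kummer. Once this identification is pinned down, the statement reduces to the standard fact that for a finite unramified extension $K_i/\mathbb{Q}_2$ one has $H^1_{\mathrm{nr}}(\mathbb{Q}_2, \mathrm{Ind}_{\mathbb{Q}_2}^{K_i}\mathbb{F}_2) = H^1_{\mathrm{nr}}(K_i,\mathbb{F}_2)$ under Shapiro, and the latter is the one-dimensional subspace of $K_i^\times\otimes\mathbb{F}_2$ spanned by a uniformizer — equivalently $\langle\xi_i\rangle$ — which is immediate from the valuation exact sequence $0\to\mathcal{O}_{K_i}^\times\otimes\mathbb{F}_2\to K_i^\times\otimes\mathbb{F}_2\to\mathbb{F}_2\to 0$ together with $\mathcal{O}_{K_i}^\times\otimes\mathbb{F}_2 = 0$ (as $K_i/\mathbb{Q}_2$ unramified forces the residue field to have odd order... wait, residue field is $\mathbb{F}_{2^{f_i}}$, whose unit group has order $2^{f_i}-1$, odd, so indeed $\mathcal{O}_{K_i}^\times\otimes\mathbb{F}_2$ is cyclic of order $2$ generated by $-1$; one must then note $-1$ is a square in $\mathbb{Q}_2^{nr}$, so this class is $0$ in $K_i^\times\otimes\mathbb{F}_2$, giving $\mathcal{O}_{K_i}^\times\otimes\mathbb{F}_2 = 0$ and hence $K_i^\times\otimes\mathbb{F}_2 = \langle\xi_i\rangle \simeq\mathbb{F}_2$).
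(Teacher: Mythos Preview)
Your strategy—exploit that $\wedge^2 T_2 J_{\mathbb{F}_2}$ is unramified as a $G_{\mathbb{Q}_2}$-module so that the relevant classes land in $H^1_{\mathrm{nr}}$, then identify $H^1_{\mathrm{nr}}(K_i,\mathbb{F}_2)$ with $\langle\xi_i\rangle$—is the same as the paper's. But your identification of the $2$-torsion is wrong. From $0 \to \wedge^2 T_2 J_{\mathbb{F}_2} \xrightarrow{\,2\,} \wedge^2 T_2 J_{\mathbb{F}_2} \to \wedge^2 J_{\mathbb{F}_2}[2] \to 0$ one sees that $H^1(\mathbb{Q}_2,\wedge^2 T_2 J_{\mathbb{F}_2})[2]$ is the image of the \emph{boundary map} from $H^0(\mathbb{Q}_2,\wedge^2 J_{\mathbb{F}_2}[2])$ (using $H^0(\mathbb{Q}_2,\wedge^2 T_2 J_{\mathbb{F}_2})=0$); there is no natural map from $H^1(\mathbb{Q}_2,\wedge^2 J_{\mathbb{F}_2}[2])$ into $H^1(\mathbb{Q}_2,\wedge^2 T_2 J_{\mathbb{F}_2})$ in the direction you assert. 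The paper then argues that this boundary map, having unramified source and target, lands in $H^1_{\mathrm{nr}}(\mathbb{Q}_2,\wedge^2 T_2 J_{\mathbb{F}_2})$, so its further image in $H^1(\mathbb{Q}_2,\wedge^2 J_{\mathbb{F}_2}[2])$ is unramified and hence maps into $\bigoplus_i\langle\xi_i\rangle$. Your claim that $H^0(\mathbb{Q}_2,\wedge^2 J_{\mathbb{F}_2}[2])=0$ is also false: what vanishes in the proof of Lemma~\ref{MainDesc} is $H^0$ with $\mathbb{Q}_2$-coefficients (a Weil-number constraint on Frobenius eigenvalues), whereas the mod-$2$ invariants are nonzero—for any irreducible factor of $Q$ with roots $\beta_{i_1},\dots,\beta_{i_d}$ permuted cyclically by Frobenius, the element $\beta_{i_1}\wedge\beta_{i_2}+\cdots+\beta_{i_{d-1}}\wedge\beta_{i_d}+\beta_{i_1}\wedge\beta_{i_d}$ is a nonzero Galois invariant.

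Your closing computation is a separate error: $\mathcal{O}_{K_i}^\times\otimes\mathbb{F}_2$ is not zero for a $2$-adic field $K_i$ (one has $\dim_{\mathbb{F}_2}(K_i^\times\otimes\mathbb{F}_2)=[K_i:\mathbb{Q}_2]+2$, and $-1$ is not a square in any unramified extension of $\mathbb{Q}_2$ since $\mathbb{Q}_2(\sqrt{-1})/\mathbb{Q}_2$ is ramified). Fortunately this overreach is unnecessary: you only need that the unramified part $H^1_{\mathrm{nr}}(K_i,\mathbb{F}_2)=H^1(\kappa_i,\mathbb{F}_2)\cong\mathbb{F}_2$ is the line $\langle\xi_i\rangle$ inside the larger space $K_i^\times\otimes\mathbb{F}_2$, which is immediate.
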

\begin{proof}
Since $\wedge^2 J_{\mathbb{F}_2}[2]$ is isomorphic to the mod-$2$ quotient of $\wedge^2 T_2J_{\mathbb{F}_2}$ and $H^0(\mathbb{Q}_2,\wedge^2 T_2J_{\mathbb{F}_2})$ is trivial, it follows that the $2$-torsion in $H^1(\mathbb{Q}_2,\wedge^2 T_2J_{\mathbb{F}_2})$ is given by the image of the boundary map
\begin{align}\label{2torsioninJF2}
H^0(\mathbb{Q}_2,\wedge^2 J_{\mathbb{F}_2}[2])\rightarrow H^1(\mathbb{Q}_2,\wedge^2 T_2J_{\mathbb{F}_2}).
\end{align}
As $\wedge^2 T_2J_{\mathbb{F}_2}$ is unramified as a $G_{\mathbb{Q}_2}$-module, it follows that the image of \eqref{2torsioninJF2} is contained in 
\begin{align*}
H^1(\mathbb{F}_2,\wedge^2T_2J_{\mathbb{F}_2})=H^1_{\mathrm{nr}}(\mathbb{Q}_2,\wedge^2 T_2J_{\mathbb{F}_2}),
\end{align*}
from which we conclude that the image of $H^1(\mathbb{Q}_2,\wedge^2 T_2J_{\mathbb{F}_2})[2]$ under the first map in \eqref{unramMAp} is unramified. Moreover, the exact sequence \eqref{SpFibSES} implies that $H^1(\mathbb{F}_2,\wedge^2 J_{\mathbb{F}_2}[2])$ maps to 
\begin{align}\label{CopiesOfF2}
H^1(\mathbb{F}_2,\wedge^2\mathrm{Ind}_{\mathbb{Q}_2}^{\mathbb{Q}_{2,Q}}\mathbb{F}_2)\simeq \bigoplus_i \mathbb{F}_2,
\end{align}
where the index $i$ varies over the number of fields in the decomposition \eqref{DecompQ2ord} of $\mathbb{Q}_2^{\mathrm{ord}}$. The result then follows as the image of \eqref{CopiesOfF2} in $\mathbb{Q}_2^{\mathrm{ord},\times}\otimes \mathbb{F}_2$ is precisely the sum $\bigoplus_i\langle \xi_i \rangle $. 
\end{proof}

\begin{remark}
The cohomology group $H^0(\mathbb{Q}_2,\wedge^2 J_{\mathbb{F}_2}[2])$, and therefore the $2$-torsion in $H^1(\mathbb{Q}_2,\wedge^2 T_2J_{\mathbb{F}_2})$, is always nontrivial in the case that $X$ has ordinary reduction over $\mathbb{Q}_2$. This follows from the fact that the defining polynomial $Q$ being separable modulo $2$ implies that the Galois group of each irreducible factor of $Q$ over $\mathbb{Q}_2$ is cyclic. In particular, for an irreducible factor of $Q$ of degree $d$ with roots $\{\beta_{i_j}\}_{j=1}^d$, the sum
\begin{align*}
\beta_{i_1}\wedge \beta_{i_2}+\beta_{i_2}\wedge \beta_{i_3}+\cdots+\beta_{i_{d-1}}\wedge \beta_{i_d} + \beta_{i_1}\wedge \beta_{i_d}
\end{align*}
defines an element of $H^0(\mathbb{Q}_2,\wedge^2 J_{\mathbb{F}_2}[2])$.
\end{remark}

\begin{theorem}\label{MainOddTheorem}
Suppose that $X$ is a hyperelliptic curve of genus $g$ with exactly one RWP, given by the Weierstrass model \eqref{WeierstrassModel} such that $f= 4P+Q^2$ has odd degree. Let $S$ be a finite set of primes that contains $2$, where $X$ has good ordinary reduction, as well as the primes where $X$ does not have semistable reduction. Then 
\begin{align*}
\dim_{\mathbb{Q}_2} H^1_f(G_S,\wedge^2 V_2J)\leq \dim_{\mathbb{F}_2}\mathrm{Ker}(\theta_2\oplus \theta_\infty \oplus \theta_{\mathrm{dR}})-2.
\end{align*}
\end{theorem}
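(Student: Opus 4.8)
The plan is to assemble the three local obstructions $\theta_2$, $\theta_\infty$ and $\theta_{\mathrm{dR}}$ into a single constraint refining the inclusion \eqref{SUnitBound} of Lemma \ref{unramified}. First I would record that Lemma \ref{unramified} gives $H^1_f(\mathbb{Q},\wedge^2 J[2])\subseteq H^1(G_S,\wedge^2 J[2])$, and that by Lemma \ref{BK1Lemmas}(2) the latter is identified with $\mathrm{Ker}(\mathbb{Q}_f^{(2),\times}\otimes\mathbb{F}_2\xrightarrow{\mathrm{Nm}}\mathbb{Q}_f^\times\otimes\mathbb{F}_2)$, which is precisely the source of $\theta_{\mathrm{dR}}$ by Definition \ref{DefThetaDr}. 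The inclusion $H^1_f(\mathbb{Q},\wedge^2 J[2])\subseteq \mathrm{Ker}\,\theta_2\cap\mathrm{Ker}\,\theta_\infty$ is already \eqref{BoundaryInclusion}, so the new content is to show that the image of $H^1_f(\mathbb{Q},\wedge^2 T_2J)$ in $H^1(G_S,\wedge^2 J[2])$ also lies in $\mathrm{Ker}\,\theta_{\mathrm{dR}}$; combining these gives
\begin{align*}
H^1_f(\mathbb{Q},\wedge^2 T_2 J\otimes\mathbb{F}_2)\subseteq \mathrm{Ker}(\theta_2\oplus\theta_\infty\oplus\theta_{\mathrm{dR}}).
\end{align*}

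The heart of the argument is the $\theta_{\mathrm{dR}}$ claim, and here I would restrict to $G_{\mathbb{Q}_2}$. A class in $H^1_f(\mathbb{Q},\wedge^2 T_2J)$ restricts into $H^1_f(\mathbb{Q}_2,\wedge^2 V_2J)\subseteq H^1_g(\mathbb{Q}_2,\wedge^2 V_2J)$, which by Lemma \ref{MainDesc} is exactly $\mathrm{Ker}(H^1(\mathbb{Q}_2,\wedge^2 V_2J)\to H^1(\mathbb{Q}_2,\wedge^2 V_2J_{\mathbb{F}_2}))$. Passing to integral lattices and reducing mod $2$, the image of such a class in $H^1(\mathbb{Q}_2,\wedge^2 J_{\mathbb{F}_2}[2])$ therefore lands in the image of the $2$-torsion subgroup $H^1(\mathbb{Q}_2,\wedge^2 T_2 J_{\mathbb{F}_2})[2]$ — I would need to track the mod-$2$ reduction carefully through the lattice sequence $0\to W\cap T\to \wedge^2 T_2 J\to \wedge^2 T_2 J_{\mathbb{F}_2}\to 0$ and the snake lemma, using triviality of $H^0(\mathbb{Q}_2,\wedge^2 T_2 J_{\mathbb{F}_2})$ from the Weil conjectures. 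Then Lemma \ref{unramtorsion} identifies that $2$-torsion image inside $\bigoplus_i K_i^\times\otimes\mathbb{F}_2$ as exactly $\bigoplus_i\langle\xi_i\rangle$, so after composing with $\mathbb{Q}_{2,f}^{(2),\times}\otimes\mathbb{F}_2\to\mathbb{Q}_2^{\mathrm{ord},\times}\otimes\mathbb{F}_2$ of Lemma \ref{ExplicitMap} and quotienting by $\langle\xi_i\rangle$ the class dies — this is precisely $\theta_{\mathrm{dR}}=0$. The one subtlety flagged in Remark \ref{DiffModels} — that the odd-degree model needed for Lemma \ref{BK1Lemmas} forces $\deg Q<g+1$, whereas ordinary reduction forces $\deg Q=g+1$ — is handled by the change of model $Q_\beta$ in \eqref{Q2ord}, and I would check the identification of $H^1(\mathbb{Q}_2,\wedge^2 J_{\mathbb{F}_2}[2])$ is unaffected since the two models define isomorphic curves over $\mathbb{Q}$.

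Granting the inclusion $H^1_f(\mathbb{Q},\wedge^2 T_2 J\otimes\mathbb{F}_2)\subseteq\mathrm{Ker}(\theta_2\oplus\theta_\infty\oplus\theta_{\mathrm{dR}})$, it remains to pass from $\mathbb{F}_2$-dimensions back to the $\mathbb{Q}_2$-dimension of $H^1_f(G_S,\wedge^2 V_2J)$ and extract the $-2$. The rank of $H^1_f(\mathbb{Q},\wedge^2 T_2 J)$ is bounded by $\dim_{\mathbb{F}_2}\mathrm{Ker}(\theta_2\oplus\theta_\infty\oplus\theta_{\mathrm{dR}})$, and this already bounds $\dim_{\mathbb{Q}_2}H^1_f(G_S,\wedge^2 V_2J)$. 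The improvement by $2$ comes from the fact that $\wedge^2 J[2]$ (via the split sequence \eqref{DefinitionOddWedge}, using that $f$ has odd degree) contains the Galois-trivial classes built from the $2$-torsion in the odd-degree setting: there is a $2$-dimensional space of classes in $H^1(G_S,\wedge^2 J[2])$ lying in $\mathrm{Ker}(\theta_2\oplus\theta_\infty\oplus\theta_{\mathrm{dR}})$ coming from the global $H^0$-contributions, which do not lift to genuine Bloch-Kato classes; equivalently, the mod-$2$ reduction map $H^1_f(\mathbb{Q},\wedge^2 T_2 J)\to H^1(\mathbb{Q},\wedge^2 J[2])$ has image of codimension at least $2$ inside $\mathrm{Ker}(\theta_2\oplus\theta_\infty\oplus\theta_{\mathrm{dR}})$, by a counting argument comparing $H^0$ and $H^1$ terms across the exact sequences and invoking the Euler–Poincaré formula as in the proof of Lemma \ref{EvenLemmaGeneral}. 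The main obstacle I anticipate is the bookkeeping in this last step: pinning down exactly which classes account for the $-2$ and verifying they are not in the image of $H^1_f$, which requires carefully comparing the global cohomology of $\wedge^2 T_2 J$ with its mod-$2$ reduction and with the kernel of $\theta_{\mathrm{dR}}$ (as opposed to the mere containment), so that the dimension drop is genuine rather than merely an artifact of the mod-$2$ reduction losing information.
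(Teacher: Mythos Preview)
Your argument for the inclusion $H^1_f(\mathbb{Q},\wedge^2 T_2J\otimes\mathbb{F}_2)\subseteq\mathrm{Ker}(\theta_2\oplus\theta_\infty\oplus\theta_{\mathrm{dR}})$ is essentially the paper's: Lemma \ref{MainDesc} plus Lemmas \ref{ExplicitMap} and \ref{unramtorsion} give the $\theta_{\mathrm{dR}}$ containment, and \eqref{BoundaryInclusion} gives the other two. One small point: the paper actually proves the stronger statement that $H^1_g(G_S,\wedge^2 J[2])\subseteq\mathrm{Ker}(\theta_{\mathrm{dR}})$, not just $H^1_f$, and this matters for what follows.

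The genuine gap is in your extraction of the $-2$. You attribute both units to ``global $H^0$-contributions'' and propose an Euler--Poincar\'e counting argument, but this does not work: the Weil pairing gives only a one-dimensional $\mathbb{F}_2\hookrightarrow\wedge^2 J[2]$, so torsion in $H^1_f(G_S,\wedge^2 T_2J)$ accounts for only one $-1$. The second $-1$ in the paper comes from an entirely different source, namely the strict inequality $\dim H^1_f<\dim H^1_g$ of Remark \ref{fDiffg}. Concretely, the paper uses the splitting $\wedge^2 V_2J\simeq\mathbb{Q}_2(1)\oplus\overline{\wedge^2 V_2J}$ together with $H^1_f(G_S,\mathbb{Q}_2(1))=0\neq H^1_g(G_S,\mathbb{Q}_2(1))$ to exhibit a class in $H^1_g(G_S,\wedge^2 J[2])\setminus H^1_f(G_S,\wedge^2 J[2])$. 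The nontrivial point is that this class, coming from $H^1(G_S,\mu_2)$, lifts to $H^1(\mathbb{Q}_v,\mu_4)\subseteq H^1(\mathbb{Q}_v,\wedge^2 J[4])$ and hence lies in $\mathrm{Ker}(\theta_2\oplus\theta_\infty)$; since it is already in $\mathrm{Ker}(\theta_{\mathrm{dR}})$ by the $H^1_g$ containment, it witnesses a strict inclusion $H^1_f(G_S,\wedge^2 J[2])\subsetneq\mathrm{Ker}(\theta_2\oplus\theta_\infty\oplus\theta_{\mathrm{dR}})$. Your Euler--Poincar\'e sketch does not touch this $f$-versus-$g$ mechanism, and without it you can only justify $-1$, not $-2$.
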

\begin{proof}
By Lemma \ref{MainDesc}, we obtain the inclusion
\begin{align*}
H^1_g(G_S,\wedge^2 J[2])\subseteq \mathrm{Ker}\big(H^1(G_S,\wedge^2 J[2])\rightarrow H^1(\mathbb{Q}_2,\wedge^2 J_{\mathbb{F}_2}[2])/\mathrm{Im}(H^1(\mathbb{Q}_2,\wedge^2T_2J_{\mathbb{F}_2})[2])\big),
\end{align*}
where the map on the right-hand side factors through $H^1(\mathbb{Q}_2,\wedge^2J[2])$. By Lemmas \ref{ExplicitMap} and \ref{unramtorsion}, and the isomorphism \eqref{OrdinaryWedge}, the right-hand side of the inclusion is contained in $\mathrm{Ker}(\theta_{\mathrm{dR}})$. 
By Remark \ref{fDiffg}, it then suffices to show that $ H^1_f(G_S, \wedge^2 T_2J) $ has nontrivial 2-torsion and that 
\begin{align}\label{ProveNeq}
    H_f^1(G_S, \wedge^2 J[2]) \cap \mathrm{Ker} (\theta_2 \oplus \theta_\infty) 
    \neq H^1_g(G_S, \wedge^2 J[2]) \cap \mathrm{Ker} (\theta_2 \oplus \theta_\infty),
\end{align}
from which the result follows from the inclusion \eqref{BoundaryInclusion}. Towards the former point, note that the Weil pairing, upon dualising, induces the injection 
\begin{align*}
    \mathbb{F}_2 \hookrightarrow \wedge^2 J[2].
\end{align*}
It follows that $ H^0(G_S, \wedge^2 J[2]) $ is nontrivial, and this group injects into $ H^1(G_S, \wedge^2 T_2J) $, as $ H^0(G_S, \wedge^2 T_2J) = 0 $. Finally, it is clear by definition that $ H^1_f(G_S, \wedge^2 T_2J) $ contains all the 2-torsion of $ H^1(G_S, \wedge^2 T_2J) $. 

Given the decomposition 
\begin{align*}
    \wedge^2 V_2J \simeq \mathbb{Q}_2(1) \oplus \overline{\wedge^2 V_2J},
\end{align*}
and the fact that $ H^1_f(G_S, \mathbb{Q}_2(1)) = 0 $ while $ H^1_g(G_S, \mathbb{Q}_2(1)) \neq 0 $, it follows that there exists an element in $H^1(G_S,\mathbb{Z}_2(1))$ whose image in $H^1_g(\mathbb{Q}_2,\wedge^2 J[2])$ under the composite 
\begin{align*}
H^1(G_S,\mathbb{Z}_2(1))\rightarrow H^1(G_S,\mu_2) \rightarrow H^1(\mathbb{Q}_2,\mu_2)\rightarrow H^1(\mathbb{Q}_2,\wedge^2 J[2])
\end{align*} 
is not contained in $H^1_f(\mathbb{Q}_2,\wedge^2 J[2])$. Finally, since $ H^1(\mathbb{Q}_v, \mu_2)$ in $ H^1(\mathbb{Q}_v, \wedge^2 J[2]) $ clearly lifts to $H^1(\mathbb{Q}_v, \mu_4) $ in $ H^1(\mathbb{Q}_v, \wedge^2 J[4])$ for $ v \in \{2, \infty\} $, this proves \eqref{ProveNeq}.
\end{proof}

\begin{corollary}\label{OddFiniteCriterion}
Suppose $X$ and $S$ are as in Theorem \ref{MainOddTheorem}. Then the depth $2$ set $X(\mathbb{Q}_2)_2$ is finite whenever 
\begin{align*}
\dim_{\mathbb{F}_2} \mathrm{Ker}(\theta_2 \oplus \theta_\infty \oplus \theta_{\mathrm{dR}})< \frac{1}{2}(3g^2+g+2)-\mathrm{rk}\hspace{0.8mm} J(\mathbb{Q}).
\end{align*}
\end{corollary}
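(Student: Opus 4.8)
The plan is to obtain the corollary as a formal consequence of Theorem \ref{MainOddTheorem} and Corollary \ref{netancor}, the only real content being the numerical comparison of the two thresholds.

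First I would assemble the chain of bounds on the left-hand side of \eqref{DimIneq}. By the reductions recorded after Definition \ref{BKSgroupT}, one has $\dim_{\mathbb{Q}_2} H^1_f(\mathbb{Q},\wedge^2 V_2 J) \le \mathrm{rk}\, H^1_f(\mathbb{Q},\wedge^2 T_2 J) \le \dim_{\mathbb{F}_2} H^1_f(\mathbb{Q},\wedge^2 J[2])$, and Lemma \ref{unramified} shows every such class is unramified outside $S$; consequently $\dim_{\mathbb{Q}_2} H^1_f(\mathbb{Q},\wedge^2 V_2 J) \le \dim_{\mathbb{Q}_2} H^1_f(G_S,\wedge^2 V_2 J)$. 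Feeding this into Theorem \ref{MainOddTheorem} yields
\[
\dim_{\mathbb{Q}_2} H^1_f(\mathbb{Q},\wedge^2 V_2 J) \;\le\; \dim_{\mathbb{F}_2}\mathrm{Ker}(\theta_2\oplus\theta_\infty\oplus\theta_{\mathrm{dR}}) - 2.
\]

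Next I would rewrite the threshold appearing in Corollary \ref{netancor}: since $(3g-2)(g+1) = 3g^2 + g - 2$, we have $\tfrac12(3g-2)(g+1) = \tfrac12(3g^2+g+2) - 2$. Hence the hypothesis $\dim_{\mathbb{F}_2}\mathrm{Ker}(\theta_2\oplus\theta_\infty\oplus\theta_{\mathrm{dR}}) < \tfrac12(3g^2+g+2) - \mathrm{rk}\, J(\mathbb{Q})$ of the corollary is equivalent to
\[
\dim_{\mathbb{F}_2}\mathrm{Ker}(\theta_2\oplus\theta_\infty\oplus\theta_{\mathrm{dR}}) - 2 \;<\; \tfrac12(3g-2)(g+1) - \mathrm{rk}\, J(\mathbb{Q}).
\]
Combining with the previous display gives exactly the inequality \eqref{DimIneq}, so Corollary \ref{netancor} applies and $X(\mathbb{Q}_2)_2$ is finite.

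There is no substantial obstacle: all the work is already contained in Theorem \ref{MainOddTheorem} (whose proof in turn rests on Lemmas \ref{MainDesc}, \ref{ExplicitMap}, \ref{unramtorsion} and the boundary-map computations of \cite{bk2}). The one point that deserves a line of care is the passage from $H^1_f(\mathbb{Q},\cdot)$ to the $S$-truncated group $H^1_f(G_S,\cdot)$ used in that theorem; this is legitimate precisely because the $f$-condition at places outside $S$ is the unramified condition, which is vacuous on $H^1(G_S,\cdot)$, and this is exactly what Lemma \ref{unramified} records at the mod-$2$ level, the analogue for the lattice $\wedge^2 T_2 J$ and the representation $\wedge^2 V_2 J$ following from the same semistability input.
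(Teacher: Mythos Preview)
Your proposal is correct and follows exactly the approach the paper indicates: the paper's own proof is the single sentence ``This immediately follows from Theorem \ref{MainOddTheorem} and Corollary \ref{netancor},'' and you have simply unpacked the arithmetic identity $\tfrac12(3g-2)(g+1)=\tfrac12(3g^2+g+2)-2$ and the passage from $H^1_f(\mathbb{Q},\cdot)$ to $H^1_f(G_S,\cdot)$ that makes this immediate.
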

\begin{proof}
This immediately follows from Theorem \ref{MainOddTheorem} and Corollary \ref{netancor}.
\end{proof}

\subsection{Case 2: Hyperelliptic curves without a rational Weierstrass point}

As discussed in Section \ref{SectionPrereqs}, contrary to the previous section, we no longer have an explicit field-theoretic description of $H^1(\mathbb{Q},\wedge^2 J[2])$ to obtain an upper bound for the rank of the relevant Bloch-Kato Selmer group. However, Lemma \ref{WedgeSqJm} provides a description of the analogue for the generalised Jacobian, which will suffice for deriving the required upper bounds. To that end, suppose that $X$ is a hyperelliptic curve of genus $g$ with no RWP, and $S$ is a nonempty set of primes. In the notation of Lemma \ref{EvenLemmaGeneral}, it is clear that 
\begin{align*}
H^1_g\big(G_S,V_2J(1)(\chi_c)\big)=H^1\big(G_S,V_2J(1)(\chi_c)\big),
\end{align*}
where $H^1_g(G_S,-)$ denotes those classes that are unramified outside of $S$ and de Rham at $2$. This immediately yields the following refinement of the inequality from Lemma \ref{EvenLemmaGeneral}:
\begin{align}\label{GoodEvenIneq}
\mathrm{rk}\hspace{0.8mm} H^1_g(G_S,\wedge^2 T_2J) \leq \mathrm{rk}\hspace{0.8mm} H^1_g(G_S,\wedge^2 T_2 J_\mathfrak{m})-g.
\end{align}
This inequality allows the relevant dimension bounds to be deduced solely from cohomology groups associated to the generalised Jacobian, which, in this case, are more amenable to computation. 

\begin{theorem}\label{MainEvenThm}
Suppose $X$ is a hyperelliptic curve over $\mathbb{Q}$ of genus $g$ without a RWP, given by the Weierstrass model \eqref{WeierstrassModel}. Suppose $S$ is a finite set of primes that contains $2$, where $X$ has good ordinary reduction, as well as all primes where $X$ does not have semistable reduction. Then 
\begin{align*}
\dim _{\mathbb{Q}_2} H^1(G_S,\wedge^2 V_2J)\leq\dim_{\mathbb{F}_2} \mathrm{Ker}(\theta_{\mathrm{dR}})-g-1
\end{align*}
\end{theorem}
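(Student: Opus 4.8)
The plan is to run the argument for Theorem \ref{MainEvenThm} in close parallel to the proof of Theorem \ref{MainOddTheorem}, but with the Jacobian $J$ replaced by the generalised Jacobian $J_\mathfrak{m}$ wherever we need an explicit field-theoretic handle, and then descend back to $J$ via the inequality \eqref{GoodEvenIneq}. So the first step is to bound $\mathrm{rk}\,H^1_g(G_S,\wedge^2 T_2J_\mathfrak{m})$. Exactly as in the proof of Lemma \ref{MainDesc} — whose hypotheses (good ordinary reduction at $2$) are in force — the reduction map on $2$-torsion gives a short exact sequence of ordinary $\mathbb{Z}_2[G_{\mathbb{Q}_2}]$-modules, inducing a filtration on $\wedge^2 V_2J_\mathfrak{m}$ whose top graded piece is $\wedge^2 V_2 (J_\mathfrak{m})_{\mathbb{F}_2}$; since $H^0(\mathbb{Q}_2,\wedge^2 V_2 (J_\mathfrak{m})_{\mathbb{F}_2})$ still vanishes, one gets $H^1_g(\mathbb{Q}_2,\wedge^2 V_2 J_\mathfrak{m})$ equal to the kernel of the reduction map to $H^1(\mathbb{Q}_2,\wedge^2 V_2 (J_\mathfrak{m})_{\mathbb{F}_2})$, i.e. the de Rham condition at $2$ becomes an explicit linear condition. (One should note here that since $\deg Q=g+1$ in our good-ordinary model, the special fibre of $J_\mathfrak{m}$ has the same $2$-torsion description as that of $J$, so no extra work is needed for the reduction target.)

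Next I would translate this into the $2$-descent picture of Section \ref{SectionPrereqs}. By Lemma \ref{WedgeSqJm}, $H^1(K,\wedge^2 J_\mathfrak{m}[2])$ — equivalently, in the relevant cases, $\mathcal{A}(f,S)$ — sits inside $\mathrm{Ker}\big(H^1(G_{\mathbb{Q}_f^{(2)},S},\mathbb{F}_2)\to H^1(G_{\mathbb{Q}_f,S},\mathbb{F}_2)\big)$, which is precisely the source of $\theta_{\mathrm{dR}}$. Combining Lemma \ref{ExplicitMap}, Lemma \ref{unramtorsion}, and the isomorphism \eqref{OrdinaryWedge} exactly as in Theorem \ref{MainOddTheorem}, the image of $H^1_g(G_S,\wedge^2 T_2J_\mathfrak{m})$ in $H^1(G_S,\wedge^2 J_\mathfrak{m}[2])$ lands in $\mathrm{Ker}(\theta_{\mathrm{dR}})$; hence $\mathrm{rk}\,H^1_g(G_S,\wedge^2 T_2J_\mathfrak{m})\le \dim_{\mathbb{F}_2}\mathrm{Ker}(\theta_{\mathrm{dR}})$. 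Feeding this into \eqref{GoodEvenIneq} gives $\mathrm{rk}\,H^1_g(G_S,\wedge^2 T_2J)\le \dim_{\mathbb{F}_2}\mathrm{Ker}(\theta_{\mathrm{dR}})-g$. Finally, $H^1_f(\mathbb{Q},\wedge^2 V_2J)\subseteq H^1_g(G_S,\wedge^2 V_2 J)$, and by Remark \ref{fDiffg} the $f$-versus-$g$ discrepancy at $2$ is $\rho(J_{\mathbb{F}_2})\ge 1$; exactly as in the one-RWP case, one produces a class coming from $H^1(G_S,\mathbb{Q}_2(1))$ (using $H^1_f(G_S,\mathbb{Q}_2(1))=0$ but $H^1_g(G_S,\mathbb{Q}_2(1))\ne 0$, via the $\mathbb{Q}_2(1)$ summand of $\wedge^2 V_2J$) that is $g$-local but not $f$-local at $2$, so the inequality is strict by one more, yielding the claimed bound with $-g-1$.

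The main obstacle I expect is the bookkeeping in the second step: making sure the diagram of Lemma \ref{ExplicitMap} and the torsion computation of Lemma \ref{unramtorsion} genuinely apply in the even-degree setting, where the sequence \eqref{DefinitionEvenJm} is \emph{not} split and where we must be careful that $\theta_{\mathrm{dR}}$ — defined via $\mathbb{Q}_f^{(2)}$ and the model with $\deg Q=g+1$ — is being fed the image of the generalised-Jacobian classes and not the Jacobian classes. Since Lemma \ref{WedgeSqJm} already absorbs the non-splitness (the relevant boundary map is shown to be trivial there), and Lemmas \ref{ExplicitMap}, \ref{GeneralInd}, \ref{unramtorsion} are all stated at the level of the induced modules $\mathrm{Ind}_K^{K_f}\mathbb{F}_2$ and their wedge squares — which do not see whether the curve has a RWP — this should go through essentially verbatim, but it is the point where an unguarded appeal to the odd-degree proof would be illegitimate. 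The subtraction of $g$ from \eqref{GoodEvenIneq} and the extra $-1$ from the $\mathbb{Q}_2(1)$-summand argument are then routine given the tools already in place.
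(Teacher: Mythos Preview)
Your proposal is correct and follows the same route as the paper's (very terse) proof, which cites Lemma \ref{WedgeSqJm} for the inclusion of $H^1(G_S,\wedge^2 J_\mathfrak{m}[2])$ into the source of $\theta_{\mathrm{dR}}$, asserts the resulting bound $\dim H^1_g(G_S,\wedge^2 T_2 J_\mathfrak{m})\le\dim\mathrm{Ker}(\theta_{\mathrm{dR}})$, and then invokes \eqref{GoodEvenIneq} and Remark \ref{fDiffg} for the $-g$ and the final $-1$. Your extra care about the ordinary filtration of $\wedge^2 V_2 J_\mathfrak{m}$ (noting that $(J_\mathfrak{m})_{\mathbb{F}_2}[2]\simeq J_{\mathbb{F}_2}[2]$ since $\mu_2$ is trivial in characteristic~$2$) and about producing a global class via the $\mathbb{Q}_2(1)$-summand of $\wedge^2 V_2 J$ to witness the $f$-versus-$g$ gap fills in precisely the details the paper leaves implicit.
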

\begin{proof}
By Lemma \ref{WedgeSqJm} and the short exact sequence \eqref{DefinitionEvenWedge}, there is an inclusion 
\begin{align*}
H^1(G_S,\wedge^2 J_\mathfrak{m}[2])\subseteq \mathrm{Ker}\Big(H^1\big(G_{\mathbb{Q}_f^{(2)},S},\mathbb{F}_2\big)\rightarrow H^1\big(G_{\mathbb{Q}_f,S},\mathbb{F}_2\big)\Big),
\end{align*} 
which gives the bound $\dim H^1_g(G_S,\wedge^2 T_2J_\mathfrak{m})\leq \dim \mathrm{Ker} (\theta_\mathrm{dR})$. The result follows from \eqref{GoodEvenIneq} and Remark \ref{fDiffg}.
\end{proof}

\begin{corollary}\label{EvenFiniteCriterion}
Suppose $X$ and $S$ are as in Theorem \ref{MainEvenThm}. Then the depth $2$ set $X(\mathbb{Q}_2)_2$ is finite whenever 
\begin{align*}
\dim_{\mathbb{F}_2} \mathrm{Ker}(\theta_{\mathrm{dR}})<\frac{3}{2}g(g+1)-\mathrm{rk}\hspace{0.8mm} J(\mathbb{Q}). 
\end{align*}
\end{corollary}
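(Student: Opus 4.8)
The plan is to derive the finiteness criterion as an immediate formal consequence of the dimension bound in Theorem \ref{MainEvenThm} together with Kim's criterion recorded in Corollary \ref{netancor} (with $p=2$). First I would recall that, by Lemma \ref{unramified}, the set $S$ chosen in the hypothesis contains $2$ and all primes of bad (non-semistable) reduction, so that $H^1_f(\mathbb{Q},\wedge^2 J[2])\subseteq H^1(G_S,\wedge^2 J[2])$; consequently bounding $\dim_{\mathbb{Q}_2} H^1_f(\mathbb{Q},\wedge^2 V_2 J)$ reduces to bounding $\dim_{\mathbb{F}_2} H^1(G_S,\wedge^2 J[2])$, which is exactly what the left-hand side of Theorem \ref{MainEvenThm} controls (after passing through the rank of $H^1(G_S,\wedge^2 T_2 J)$, which dominates the $\mathbb{F}_2$-dimension of its mod-$2$ reduction).

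Next I would chain the inequalities: Theorem \ref{MainEvenThm} gives
\begin{align*}
\dim_{\mathbb{Q}_2} H^1_f(\mathbb{Q},\wedge^2 V_2 J)\leq \dim_{\mathbb{F}_2}\mathrm{Ker}(\theta_{\mathrm{dR}})-g-1,
\end{align*}
so the hypothesis $\dim_{\mathbb{F}_2}\mathrm{Ker}(\theta_{\mathrm{dR}})<\tfrac{3}{2}g(g+1)-\mathrm{rk}\, J(\mathbb{Q})$ yields
\begin{align*}
\dim_{\mathbb{Q}_2} H^1_f(\mathbb{Q},\wedge^2 V_2 J)< \tfrac{3}{2}g(g+1)-\mathrm{rk}\, J(\mathbb{Q})-g-1 = \tfrac{1}{2}(3g-2)(g+1)-\mathrm{rk}\, J(\mathbb{Q}),
\end{align*}
where the last identity is the routine algebraic simplification $\tfrac{3}{2}g(g+1)-g-1=\tfrac{1}{2}\big(3g(g+1)-2g-2\big)=\tfrac{1}{2}(g+1)(3g-2)$. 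This is precisely the dimension inequality \eqref{DimIneq} appearing in Corollary \ref{netancor} with $p=2$, so that corollary gives finiteness of $X(\mathbb{Q}_2)_2$.

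The only subtlety worth flagging — and the step I would expect to need a sentence of care — is the transition between $H^1(G_S,\wedge^2 V_2 J)$ (the object in Theorem \ref{MainEvenThm}) and $H^1_f(\mathbb{Q},\wedge^2 V_2 J)$ (the object in Corollary \ref{netancor}); but this is handled by Lemma \ref{unramified}, which places $H^1_f(\mathbb{Q},\wedge^2 J[2])$ inside $H^1(G_S,\wedge^2 J[2])$ for the chosen $S$, and by Definition \ref{BKSgroupT} together with the discussion following Corollary \ref{netancor}, which reduces bounding $\dim_{\mathbb{Q}_2} H^1_f(\mathbb{Q},\wedge^2 V_2 J)$ to bounding $\mathrm{rk}\, H^1_f(\mathbb{Q},\wedge^2 T_2 J)\leq \dim_{\mathbb{F}_2} H^1_f(\mathbb{Q},\wedge^2 J[2])$. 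There is no real obstacle here: the whole content of the corollary is the bookkeeping already carried out in the theorem, and the proof is a one-line invocation of Theorem \ref{MainEvenThm} and Corollary \ref{netancor}, exactly as in the proof of Corollary \ref{OddFiniteCriterion}.
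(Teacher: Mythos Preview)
Your proposal is correct and matches the paper's approach: the paper gives no separate proof for this corollary, leaving it implicit (exactly as with Corollary~\ref{OddFiniteCriterion}) that it follows immediately from Theorem~\ref{MainEvenThm} combined with Corollary~\ref{netancor}, together with the algebraic identity $\tfrac{3}{2}g(g+1)-g-1=\tfrac{1}{2}(3g-2)(g+1)$ that you verify. Your extra paragraph on the passage from $H^1(G_S,\wedge^2 V_2J)$ to $H^1_f(\mathbb{Q},\wedge^2 V_2J)$ via Lemma~\ref{unramified} is more detail than the paper gives, but it is not wrong and is already absorbed into the proof of Theorem~\ref{MainEvenThm}.
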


\begin{remark}\label{NoTorsRemark}
Contrary to Theorem \ref{MainOddTheorem}, the even degree case does not as readily allow for the refinements provided by $2$-torsion in $H^1(G_S, \wedge^2 T_2J_\mathfrak{m})$ or by explicit boundary maps. To establish nontrivial $2$-torsion as in the odd degree case, one would need to identify nontrivial elements of $H^0(G_S, \wedge^2 J_\mathfrak{m}[2])$, but it is not immediately clear how to deduce their existence. Moreover, it is unclear how to extend the boundary map approach from \cite{bk2} to the even degree case. For instance, the odd degree case relies on an explicit description of $H^1(K, \mathrm{End}_{\mathbb{F}_2}J[2])$, an analogue of which for the generalised Jacobian is subtler to describe. Although obstructions at the real place may be introduced following \cite[§5]{bk2}, proving the analogue of \eqref{ProveNeq} is also not immediate, as the injection $\mathbb{Q}_2(1) \to \wedge^2 V_2J$, crucial in the odd case, may no longer be applicable here.
\end{remark}

\section{Low genus examples}\label{SectionExamples}
Verifying the criteria above requires constructing the \'{e}tale $\mathbb{Q}_2$-algebra $L^{(2)}$ in Lemma \ref{ExplicitMap}, the structure of which is dictated by the Galois group of the defining polynomial $f$ of $X$ over $\mathbb{Q}_2$. Conveniently, the assumption of good ordinary reduction greatly limits the possible Galois groups of $f$, allowing for the straightforward implementation of an algorithm that checks the finiteness criteria for each possible construction of $L^{(2)}$ for low genus curves. The steps in this construction, which are independent of whether $X$ has a RWP and are suitable for defining the map in Lemma \ref{ExplicitMap}, are detailed below. 
\begin{algorithm}[Explicitly computing the map from Lemma \ref{ExplicitMap}(2)]\label{Algo} $ $

\noindent \textbf{Input:} Polynomials $P,Q\in \mathbb{Q}[X]$ which define a Weierstrass model \eqref{WeierstrassModel} for a hyperelliptic curve $X$ with good ordinary reduction at $2$, such that the degree of $f=4P+Q^2$ is odd if $X$ has a RWP.

\noindent \textbf{Output:} The \'{e}tale algebra $L^{(2)}$ and the map 
\begin{align*}
L^{(2),\times}\otimes \mathbb{F}_2 \rightarrow \mathbb{Q}_2^{\mathrm{ord},\times}\otimes \mathbb{F}_2.
\end{align*}

\begin{itemize}
    \item[(1)] Determine the polynomial $F\in \mathbb{Q}[x]$ for which $\mathbb{Q}_{f}^{(2)}\simeq \mathbb{Q}[x]/(F)$, and factor it over $\mathbb{Q}_2$. Each factor defines a field in the decomposition of $\mathbb{Q}_{2,f}^{(2)}$, which, in turn, is defined by a $G_{\mathbb{Q}_2}$-stable subset of $\wedge^2 \mathrm{Ind}_{\mathbb{Q}_2}^{\mathbb{Q}_{2,f}}\mathbb{F}_2$ upon taking cohomology.
    \item[(2)] Compute the Galois group of $f$ over $\mathbb{Q}_2$ and use this to match the factors of $F$ obtained in (1) with the $G_{\mathbb{Q}_2}$-orbits of the unordered pairs of distinct roots of $f$. Let each factor $F_i$ of $F$ correspond to an orbit denoted $G_{\mathbb{Q}_2}\cdot s_i$, where $s_i $ is contained in $\wedge^2 \mathrm{Ind}_{\mathbb{Q}_2}^{\mathbb{Q}_{2,f}}\mathbb{F}_2$.
    \item[(3)] Using the correspondence in (2), match each factor $F_i$ with the field $K_i$ in the decomposition $\mathbb{Q}_{2}^{\mathrm{ord},\times}\otimes \mathbb{F}_2=\bigoplus_i K_i^\times \otimes \mathbb{F}_2$ such that the image of $G_{\mathbb{Q}_2}\cdot s_i$ under the map in Lemma \ref{ExplicitMap} gives $K_i^\times \otimes \mathbb{F}_2$ upon taking cohomology.
    \item[(4)] Any factor of $F_i$ over the corresponding field $K_i$ obtained in (3) defines an extension of $K_i$, which is the relevant component of $L^{(2)}$.
    \item[(5)] Return the map 
    \begin{align*}
    L^{(2),\times}\otimes \mathbb{F}_2 \simeq \bigoplus_{i} \Big(\bigoplus_j (K_i)_{G_{ij}}^\times \otimes \mathbb{F}_2\Big) \xrightarrow{\bigoplus_i\mathrm{Nm}} \bigoplus_i K_i^\times \otimes \mathbb{F}_2, 
    \end{align*}
    where $j$ runs over all the factors $F_{ij}$ of $F$ that correspond to $K_i$ in (4), and $G_{ij}$ is any factor of $F_{ij}$ over $K_i$.
\end{itemize}
\end{algorithm}
To exploit this algorithm to define $\theta_{\mathrm{dR}}$, it would remain to compute the unramified classes $\xi_i$ from Definition \ref{DefThetaDr} in each of the fields $K_i$ obtained in step (3). In the remainder of this section, we present examples computing $\theta_\mathrm{dR}$ associated to genus $2$ and $3$ curves by implementing the general framework outlined in Algorithm \ref{Algo}. All computations necessary in deducing these examples were implemented in \texttt{Magma} \cite{Magma}.

\subsection{Conditional dimension formulae}\label{BKSubsection}

Although the computations below provide an upper bound for the dimension of $H^1_f(G_S,\wedge^2 V_2J)$, the exact dimension is predicted by the Bloch-Kato conjectures, as previously mentioned. We now state the relevant part of the conjectures required for the conditional dimension calculations, followed by a lemma instrumental in the application of this conjecture.
\begin{conjecture}[\texorpdfstring{\cite[Conjecture 5.3(i)]{bloch1990functions}}{[BK90, Conjecture 5.3(i)]}]\label{BKConjectures}
Suppose $A$ is a smooth projective variety over $\mathbb{Q}$. Then the \'{e}tale regulator 
\begin{align*}
\mathrm{CH}^i(A,j)\otimes \mathbb{Q}_p \rightarrow H^1_f\big(\mathbb{Q},H^{2i-j-1}_{\textrm{\'{e}t}}(A_{\overline{\mathbb{Q}}},\mathbb{Q}_p(i))\big),
\end{align*}
is an isomorphism whenever $2i-j-1>0$.
\end{conjecture}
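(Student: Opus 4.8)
\emph{Strategy.} The statement is the Bloch--Kato conjecture on the \'{e}tale regulator, which is wide open in the generality quoted; no proof is given here, and in this paper it is invoked only to produce the \emph{conditional} dimension predictions of Section \ref{BKSubsection}. It is nonetheless worth recording the shape a proof would take, so as to locate the obstruction. I would proceed in three steps. The first is to establish \emph{injectivity} of the regulator: for the case of interest ($i=2$, $j=1$, $A=J$), elements of $\mathrm{CH}^2(J,1)$ are built from symbols $\{\phi,\psi\}$ with controlled divisors, and I would detect nonzero such classes through their $p$-adic (syntomic, equivalently de Rham) realization, using the compatibility of the \'{e}tale and de Rham regulators together with nondegeneracy of the relevant $p$-adic height pairing.

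The second step is to compute $\dim_{\mathbb{Q}_p}H^1_f\big(\mathbb{Q},H^{2i-j-1}_{\textrm{\'{e}t}}(A_{\overline{\mathbb{Q}}},\mathbb{Q}_p(i))\big)$ using the global Euler--Poincar\'{e} characteristic formula (exactly as in the proof of Lemma \ref{EvenLemmaGeneral}) together with Poitou--Tate duality, reducing the count to: the local de Rham data $D_{\mathrm{dR}}/\mathrm{Fil}^0$ and $D_{\mathrm{cris}}^{\varphi=1}$ at $p$; the contribution of the real place; and a global error term whose vanishing is equivalent to the finiteness of an appropriate Tate--Shafarevich-type group. The third step is to prove \emph{surjectivity} by exhibiting enough explicit elements of the higher Chow group --- concrete symbols in $\mathrm{CH}^2(J,1)$ in the spirit of Beilinson's constructions --- whose regulators span a subspace of the dimension computed in step two.

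The hard part is surjectivity: one must produce motivic classes whose \'{e}tale regulators fill out a space of \emph{exactly} the predicted dimension, which presupposes pinning that dimension down. The unconditional methods of this paper give only an inequality (Theorems \ref{MainOddTheorem} and \ref{MainEvenThm}), whereas the conjecture asserts the matching lower bound of geometric origin. A second, equally serious difficulty is the global error term in step two, whose vanishing generalises the finiteness of $\Sha$ for abelian varieties and is itself conjectural. For this reason we make no attempt to verify Conjecture \ref{BKConjectures} for the curves of Section \ref{SectionExamples}; it is used purely to contrast the \emph{expected} dimension of $H^1_f(G_S,\wedge^2 V_2J)$ with the rigorous upper bounds obtained above.
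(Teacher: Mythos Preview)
Your proposal is correct in the only way that matters: you recognise that this is a \emph{conjecture}, not a theorem, and that the paper offers no proof of it. The paper likewise gives no argument for Conjecture~\ref{BKConjectures}; it is simply stated (with attribution to \cite{bloch1990functions}) and then invoked to derive the conditional dimension formula \eqref{BKDim} and Lemma~\ref{ConditionalDiff}. Your sketch of what a proof would require --- injectivity via $p$-adic regulators, a dimension count via Euler--Poincar\'{e} and Poitou--Tate, and surjectivity by producing enough motivic classes --- is a reasonable outline of the expected structure, and your identification of surjectivity and the vanishing of the global $\Sha$-type term as the essential obstructions is accurate. None of this is in the paper, nor is it expected to be; the conjecture functions solely as a benchmark against which the unconditional upper bounds of Theorems~\ref{MainOddTheorem} and~\ref{MainEvenThm} are compared (cf.\ Remark~\ref{ConditionalDiffRemark}).
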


\begin{lemma}[\texorpdfstring{\cite[Lemma 2.4]{QC2}}{[BD20, Lemma 2.4]}]\label{QC2Lemma}
Suppose $X$ is a smooth projective curve over $\mathbb{Q}$, and $S$ is a finite set of primes. Then Conjecture \ref{BKConjectures} implies 
\begin{align*}
H^1_f(G_S,\overline{\wedge^2V_2J}^*(1))=0.
\end{align*}
\end{lemma}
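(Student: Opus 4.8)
The plan is to deduce this from Conjecture \ref{BKConjectures}: after identifying the dual representation explicitly, one recognises its Bloch--Kato Selmer group as a direct summand of one that the conjecture forces to vanish. Write $W:=\overline{\wedge^2 V_2J}$, so the assertion is $H^1_f(G_S,W^*(1))=0$. The first step is to compute $W^*(1)$. The Weil pairing induces a perfect $G_{\mathbb{Q}}$-equivariant pairing $\wedge^2 V_2J\times\wedge^2 V_2J\to\mathbb{Q}_2(2)$, whence $(\wedge^2 V_2J)^*\simeq(\wedge^2 V_2J)(-2)$ and so $(\wedge^2 V_2J)^*(1)\simeq(\wedge^2 V_2J)(-1)\simeq H^2_{\textrm{\'{e}t}}(J_{\overline{\mathbb{Q}}},\mathbb{Q}_2(1))$. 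Dualising and twisting the defining sequence $0\to\mathbb{Q}_2(1)\to\wedge^2 V_2J\to W\to 0$ gives $0\to W^*(1)\to H^2_{\textrm{\'{e}t}}(J_{\overline{\mathbb{Q}}},\mathbb{Q}_2(1))\to\mathbb{Q}_2\to 0$, which splits because the canonical decomposition $\wedge^2 V_2J\simeq\mathbb{Q}_2(1)\oplus W$ does. Thus $W^*(1)$ is a $G_{\mathbb{Q}}$-direct summand of $H^2_{\textrm{\'{e}t}}(J_{\overline{\mathbb{Q}}},\mathbb{Q}_2(1))$ -- its primitive part -- a representation which is de Rham, crystalline outside $S$, and pure of weight $0$; correspondingly $H^1_f(G_S,W^*(1))$ is a direct summand of $H^1_f(G_S,H^2_{\textrm{\'{e}t}}(J_{\overline{\mathbb{Q}}},\mathbb{Q}_2(1)))$, so it suffices to kill the latter.

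For this I would invoke the \'{e}tale regulator attached to the motivic cohomology group $H^3_{\mathcal{M}}(J,\mathbb{Q}(1))$, i.e. the instance of the Bloch--Kato conjecture in the complementary range $2i-j-1\leq 0$ of Conjecture \ref{BKConjectures} (with $i=1$, $j=2$), where both sides are expected to vanish. Here the source vanishes outright: $H^3_{\mathcal{M}}(J,\mathbb{Q}(1))\cong H^2_{\mathrm{Zar}}(J,\mathbb{G}_m)\otimes\mathbb{Q}=0$, since $J$ is regular and the Gersten resolution of $\mathbb{G}_m$ has length one. The predicted isomorphism then gives $H^1_f(\mathbb{Q},H^2_{\textrm{\'{e}t}}(J_{\overline{\mathbb{Q}}},\mathbb{Q}_2(1)))=0$, hence $H^1_f(G_S,W^*(1))=0$. (Equivalently, one may phrase this as the conjectural triviality of $H^1_f$ for a motive pure of weight $\geq 0$, which follows from the Bloch--Kato formalism together with semisimplicity of pure motives.)

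To stay strictly within the form of the conjecture quoted (only $2i-j-1>0$), one can instead argue via the Bloch--Kato / Poitou--Tate Euler characteristic formula for the $f$-Selmer structure,
\[
\dim H^1_f(\mathbb{Q},W)-\dim H^1_f(\mathbb{Q},W^*(1))=\dim H^0(\mathbb{Q},W)-\dim H^0(\mathbb{Q},W^*(1))+\dim\bigl(D_{\mathrm{dR}}(W)/\mathrm{Fil}^0\bigr)-\dim H^0(\mathbb{R},W).
\]
The archimedean term causes no trouble even at $p=2$: since $2$ is invertible in $\mathbb{Q}_2$, the groups $H^i(\mathbb{R},-)$ vanish for $i>0$ on $\mathbb{Q}_2$-vector spaces, so the local condition at $\infty$ contributes exactly $-\dim H^0(\mathbb{R},W)$, as for odd $p$. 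Now $H^0(\mathbb{Q},W)=0$ since $W$ is pure of weight $-2$; a count of the Hodge--Tate weights of $\wedge^2 V_2J$, less that of the polarisation, gives $\dim D_{\mathrm{dR}}(W)/\mathrm{Fil}^0=\binom{g}{2}+g^2-1$; and the action of complex conjugation on $\wedge^2 H^1_{\mathrm{B}}(X(\mathbb{C}))$ gives $\dim H^0(\mathbb{R},W)=g^2-g$. Since $\wedge^2 V_2J\simeq H^2_{\textrm{\'{e}t}}(J_{\overline{\mathbb{Q}}},\mathbb{Q}_2(2))$ with $(i,j)=(2,1)$ and $2i-j-1=2>0$, Conjecture \ref{BKConjectures} applies and, together with $H^1_f(\mathbb{Q},\mathbb{Q}_2(1))=0$, yields $\dim H^1_f(\mathbb{Q},W)=\dim_{\mathbb{Q}}\mathrm{CH}^2(J,1)\otimes\mathbb{Q}$; substituting into the formula reduces the lemma to the numerical identity $\dim_{\mathbb{Q}}\mathrm{CH}^2(J,1)\otimes\mathbb{Q}=\frac{1}{2}(g+2)(g-1)-\dim H^0(\mathbb{Q},W^*(1))$, which is exactly the statement that the regulator image realises the expected rank.

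The main obstacle is this last step, in whichever guise: pinning down precisely the consequence of Bloch--Kato that forces the vanishing -- equivalently, matching the Hodge-theoretic local terms against $\dim\mathrm{CH}^2(J,1)$. The genuine input beyond the literal statement of Conjecture \ref{BKConjectures} is the regulator isomorphism in the degenerate range $2i-j-1\leq 0$ (where the relevant motivic cohomology group vanishes). A secondary, purely technical point is checking that the Euler characteristic formula holds verbatim at $p=2$: the standard references take $p$ odd at the archimedean place, but the relevant local cohomology still vanishes, so the bookkeeping goes through unchanged.
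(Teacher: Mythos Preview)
The paper does not give its own proof of this lemma; it simply cites \cite[Lemma 2.4]{QC2}. Your first approach---identifying $W^*(1)$ as the primitive part of $H^2_{\text{\'et}}(J_{\overline{\mathbb{Q}}},\mathbb{Q}_2(1))$, a pure weight-$0$ representation, and invoking the Bloch--Kato prediction that $H^1_f$ vanishes for motives of weight $\geq 0$---is the standard argument and is almost certainly what the cited reference does. You are right that this requires the Bloch--Kato package in slightly greater generality than the literal statement of Conjecture~\ref{BKConjectures} (which is restricted to $2i-j-1>0$); in practice, authors invoking Conjecture~\ref{BKConjectures} in this context implicitly include the weight-$\geq 0$ vanishing as part of the same conjectural framework.

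Your second approach via the Euler characteristic formula, however, does not close the circle using only the stated conjecture. After applying Conjecture~\ref{BKConjectures} at $(i,j)=(2,1)$ you obtain $\dim H^1_f(\mathbb{Q},W)=\dim_{\mathbb{Q}_2}\mathrm{CH}^2(J,1)\otimes\mathbb{Q}_2$, but nothing in Conjecture~\ref{BKConjectures} tells you the value of the right-hand side; the ``expected rank'' you appeal to is itself a separate conjectural input (Beilinson's conjecture on orders of vanishing, or equivalently the weight-$\geq 0$ vanishing you already used in the first approach). So this route is not an alternative that stays within the quoted form of the conjecture---it is a reformulation of the same missing ingredient. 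Indeed, in the paper the Poitou--Tate computation is run in the \emph{opposite} direction: equation~\eqref{BKDim} deduces the conditional dimension of $H^1_f(G_S,\wedge^2 V_2J)$ \emph{from} Lemma~\ref{QC2Lemma}, not the other way round.

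In summary: your first argument is correct and matches the intended proof; your honest caveat about needing the complementary range of Bloch--Kato is exactly the point. The second argument should be dropped or explicitly flagged as equivalent to, rather than independent of, the first.
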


Following the proof of \cite[Lemma 2.5]{QC2}, if $S$ is a finite set of primes, then we obtain the conditional dimension 
\begin{align}\label{BKDim}
\begin{split}
\dim_{\mathbb{Q}_p} H^1_f &(G_S,\wedge^2 V_2J) \\&=\dim_{\mathbb{Q}_p}H^1_f(\mathbb{Q}_p,\overline{\wedge^2 V_2J})-g(g-1)-\mathrm{rk}\hspace{0.8mm} \mathrm{NS}(J) + 1 \\
&=\frac{1}{2}(g^2+g-2)-\mathrm{rk}\hspace{0.8mm} \mathrm{NS}(J)+1,
\end{split}
\end{align}
where the first equality is a consequence of Lemma \ref{QC2Lemma} and Poitou-Tate duality, while the second follows from the equality of dimensions $\dim H^1_f(\mathbb{Q}_p,\overline{\wedge^2 V_2J}) = \dim H^1_f(\mathbb{Q}_p,\wedge^2 V_2J)-1$ and the isomorphism $H^1_f(\mathbb{Q}_p,\wedge^2 V_2J)\simeq D_{\mathrm{dR}}(\wedge^2V_2J)/\mathrm{Fil}^0$.

Under the assumption that $X$ is a hyperelliptic curve with good ordinary reduction at $2$, Conjecture \ref{BKConjectures} further predicts a greater difference in the dimensions of the global Bloch-Kato Selmer groups of Remark \ref{fDiffg}, as discussed in the following lemma.

\begin{lemma}\label{ConditionalDiff}
Suppose $X$ is a hyperelliptic curve of genus $g$ over $\mathbb{Q}$ with good ordinary reduction at $2$, and $S$ is a finite set of primes that contains $2$. Then Conjecture \ref{BKConjectures} implies that 
\begin{align*}
\dim_{\mathbb{Q}_p}H^1_g(G_S,\wedge^2 V_2J)-\dim _{\mathbb{Q}_p} H^1_f(G_S,\wedge^2 V_2J) \geq g. 
\end{align*}
\end{lemma}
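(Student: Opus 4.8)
The strategy is to compare the global difference $\dim H^1_g(G_S,\wedge^2 V_2J)-\dim H^1_f(G_S,\wedge^2 V_2J)$ with the local difference at $2$ computed in Remark \ref{fDiffg}, and to use Conjecture \ref{BKConjectures} (via Lemma \ref{QC2Lemma} and Poitou--Tate duality, exactly as in \eqref{BKDim}) to control the global $H^1_f$ precisely. First I would recall that, by definition, $H^1_f(G_S,\wedge^2 V_2J)$ and $H^1_g(G_S,\wedge^2 V_2J)$ consist of classes unramified outside $S$ that are crystalline, respectively de Rham, at $2$; for every prime in $S$ other than $2$ the two local conditions coincide, and likewise the conditions at the archimedean place and at primes outside $S$ impose nothing extra. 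Hence the quotient $H^1_g(G_S,\wedge^2 V_2J)/H^1_f(G_S,\wedge^2 V_2J)$ injects into $H^1_g(\mathbb{Q}_2,\wedge^2 V_2J)/H^1_f(\mathbb{Q}_2,\wedge^2 V_2J)$, which by Remark \ref{fDiffg} has dimension $\rho(J_{\mathbb{F}_2})$. So the inequality we want, $\dim H^1_g-\dim H^1_f\ge g$, is \emph{not} a consequence of the local computation alone (indeed $\rho(J_{\mathbb{F}_2})$ can be as small as $1$); rather it must come from the fact that the global $H^1_f$ is conjecturally \emph{smaller} than one might expect, i.e. the image of $H^1_g(G_S,-)$ in the local quotient is large.

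Concretely, I would argue as follows. Write $\overline{\wedge^2 V_2J}$ for the quotient of $\wedge^2 V_2J$ by $\mathbb{Q}_2(1)$, as in Section \ref{SectionChabauty}, so that $\wedge^2 V_2J\simeq \mathbb{Q}_2(1)\oplus\overline{\wedge^2 V_2J}$. The analogue of Remark \ref{fDiffg} for $\overline{\wedge^2 V_2J}$, combined with the exact sequence \eqref{DimDifffg}, gives
\begin{align*}
\dim H^1_g(\mathbb{Q}_2,\overline{\wedge^2 V_2J})-\dim H^1_f(\mathbb{Q}_2,\overline{\wedge^2 V_2J}) = \dim\big(D_{\mathrm{cris}}(\overline{\wedge^2 V_2J}^*(1))\big)^{\varphi=1}.
\end{align*}
Under good ordinary reduction, the filtration $F^\bullet$ of Lemma \ref{MainDesc} shows $\overline{\wedge^2 V_2J}^*(1)$ has a subquotient isomorphic to $\wedge^2 V_2J_{\mathbb{F}_2}$ with unramified (hence crystalline) Galois action whose Frobenius eigenvalues are products of pairs of distinct unit roots of the characteristic polynomial of Frobenius on $V_2J_{\mathbb{F}_2}$; the Weil bounds force one such product, up to the Tate twist, to be $1$ coming from the $g$-dimensional piece $D_{\mathrm{cris}}(V_2J_{\mathbb{F}_2}\otimes V_2J_{\mathbb{F}_2}^*)(1))^{\varphi=1}\supseteq D_{\mathrm{cris}}(\mathrm{End}(V_2J_{\mathbb{F}_2}))^{\varphi=1}$, which has dimension at least $g$ since the identity endomorphism and the $g$ Hodge--Tate weight zero lines contribute. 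I would then feed this local lower bound of $g$ into the global picture: Lemma \ref{QC2Lemma} gives $H^1_f(G_S,\overline{\wedge^2 V_2J}^*(1))=0$, so Poitou--Tate duality (as in the derivation of \eqref{BKDim}) identifies $H^1_g(G_S,\overline{\wedge^2 V_2J})$ with an extension of $H^1_f(G_S,\overline{\wedge^2 V_2J})$ by a space whose dimension is exactly the sum of the local contributions $\dim H^1_g/H^1_f$ over $v\in S$, which at $v=2$ is at least $g$ and is nonnegative elsewhere. Adding back the $\mathbb{Q}_2(1)$ summand (which only increases the gap, since $H^1_f(G_S,\mathbb{Q}_2(1))=0$ while $H^1_g(G_S,\mathbb{Q}_2(1))\neq 0$) yields the claimed inequality $\dim H^1_g(G_S,\wedge^2 V_2J)-\dim H^1_f(G_S,\wedge^2 V_2J)\ge g$.

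The main obstacle I anticipate is making the Poitou--Tate bookkeeping in the second half precise: one needs the exact-sequence relating $H^1_g$, $H^1_f$ and the dual Selmer group $H^1_f(G_S,\overline{\wedge^2 V_2J}^*(1))$ for the relaxed-at-$2$ local condition, and one must check that the vanishing of the dual group (Lemma \ref{QC2Lemma}) forces the global gap to equal, rather than merely be bounded by, the sum of local gaps — i.e. that there is no further global obstruction eating into the $g$-dimensional local contribution. The cleanest route is to mimic the computation leading to \eqref{BKDim} verbatim, replacing "crystalline at $2$" by "de Rham at $2$" throughout and tracking how the dimension count changes; the difference is precisely $\dim H^1_g(\mathbb{Q}_2,\overline{\wedge^2 V_2J})-\dim H^1_f(\mathbb{Q}_2,\overline{\wedge^2 V_2J})$, which the $p$-adic Hodge theory computation above bounds below by $g$. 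The secondary subtlety is the $\varphi=1$ eigenspace count for $D_{\mathrm{cris}}(\mathrm{End}(V_2J_{\mathbb{F}_2}))$: one should invoke the crystalline Tate isogeny theorem (\cite{MilneWaterhouse}, as in Remark \ref{fDiffg}) to identify this with $\mathrm{End}(J_{\mathbb{F}_2})\otimes\mathbb{Q}_2$, whose dimension is certainly at least $g$ — indeed at least $1$ always, but the $g$ here comes instead from the Hodge filtration jump, i.e. from $\mathrm{Fil}^0 D_{\mathrm{dR}}$ of the relevant piece having codimension $g$, which is the honest source of the bound and should be stated that way to avoid over-claiming about endomorphisms.
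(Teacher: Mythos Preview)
Your overall architecture is right and matches the paper's: (i) show the local gap $\dim H^1_g(\mathbb{Q}_2,\wedge^2 V_2J)-\dim H^1_f(\mathbb{Q}_2,\wedge^2 V_2J)$ is at least $g$, then (ii) use Lemma~\ref{QC2Lemma} and duality to propagate this to the global gap. Your treatment of (ii) is essentially the paper's argument (the paper phrases it as a contradiction: if the map from $H^1_g(G_S,\wedge^2 V_2J)$ to the local quotient failed to surject, the dual Selmer group $H^1_f\bigl(G_S,(\wedge^2 V_2J)^*(1)\bigr)$ would be nonzero, violating Lemma~\ref{QC2Lemma} via the $H^1_e$/$H^1_g$ and $H^1_f$/$H^1_f$ annihilator pairings). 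Either formulation works.

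The genuine gap is in step (i). You correctly arrive at the key object $\mathrm{End}_\varphi\bigl(D_{\mathrm{cris}}(V_2J_{\mathbb{F}_2})\bigr)$, but the justifications you offer for $\dim\ge g$ are wrong. ``The identity endomorphism and the $g$ Hodge--Tate weight zero lines'' is not a $g$-dimensional space of $\varphi$-commuting endomorphisms (the identity is one vector, and all of $V_2J_{\mathbb{F}_2}$ has Hodge--Tate weight $0$, so that phrase carries no content here). Your fallback, that ``the $g$ comes from the Hodge filtration jump, i.e.\ $\mathrm{Fil}^0 D_{\mathrm{dR}}$ having codimension $g$,'' confuses the computation of $\dim H^1_f$ with the gap $\dim H^1_g-\dim H^1_f$: the latter is governed by the $\varphi=1$ eigenspace in \eqref{DimDifffg}, not by the filtration. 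Relatedly, your parenthetical that $\rho(J_{\mathbb{F}_2})$ ``can be as small as $1$'' is false under the ordinary hypothesis; in fact Remark~\ref{fDiffg} and the proof of Lemma~\ref{ConditionalDiff} compute the same quantity, and the whole point of the lemma is to sharpen the trivial bound $\rho\ge 1$ to $\rho\ge g$.

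The paper's argument for (i) is elementary and you should use it: $V_2J_{\mathbb{F}_2}$ is a $g$-dimensional unramified representation of $G_{\mathbb{Q}_2}$, hence determined by a single operator $\varphi$. Decompose $V_2J_{\mathbb{F}_2}=\bigoplus_i V_i$ into simple $\mathbb{Q}_2[\varphi]$-modules. On each $V_i$ the characteristic polynomial of $\varphi$ is irreducible of degree $d_i=\dim V_i$, so $\{1,\varphi,\dots,\varphi^{d_i-1}\}$ are linearly independent in $\mathrm{End}_\varphi(V_i)$. Summing gives $\dim\mathrm{End}_\varphi\bigl(D_{\mathrm{cris}}(V_2J_{\mathbb{F}_2})\bigr)\ge\sum_i d_i=g$. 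That is the source of the $g$; no Hodge filtration and no isogeny theorem are needed.
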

\begin{proof}
Given the filtration of $\wedge^2 V_2J$ in the proof of Lemma \eqref{MainDesc}, in the case that $X$ has good ordinary reduction at $2$ we know that the difference in dimensions between the local Selmer groups $H^1_*(\mathbb{Q}_2,\wedge^2 V_2J)$ for $*\in \{f,g\}$ is precisely the difference in dimensions of the corresponding Selmer groups of $\mathrm{gr}^1F^\bullet = V_2J_{\mathbb{F}_2}^*\otimes V_2J_{\mathbb{F}_2}(1)$. By the exact sequence \eqref{DimDifffg}, this dimension is the following:
\begin{align*}
\dim \big(D_{\mathrm{cris}}\big((\mathrm{gr}^1F^\bullet)^*(1)\big)\big)^{\varphi =1}&= \dim D_{\mathrm{cris}}(V_2J_{\mathbb{F}_2}^*\otimes V_2J_{\mathbb{F}_2})^{\varphi = 1} \\
&= \dim \mathrm{End}_\varphi \big(D_{\mathrm{cris}}(V_2J_{\mathbb{F}_2})\big).
\end{align*}
Unconditionally, this final dimension is at least $g$. Indeed, if $V_2J_{\mathbb{F}_2}$ decomposes as the direct sum $\oplus_i V_i$ of simple representations, then $\mathrm{End}_\varphi \big(D_{\mathrm{cris}}(V_i)\big)$ contains the subspace $\langle 1, \varphi, \varphi^2,\dots,\varphi^{\dim V_i-1}\rangle$. Summing over all $V_i$ gives the desired $g$-dimensional subspace of $\mathrm{End}_\varphi \big(D_{\mathrm{cris}}(V_2J_{\mathbb{F}_2})\big)$.

This disparity in the dimensions of the local Selmer groups lifts to the global Selmer groups. To that end, suppose to the contrary that 
\begin{align*}
H^1_g(G_S,\wedge^2V_2J)\rightarrow H^1_g(\mathbb{Q}_2,\wedge^2V_2J)/H^1_f(\mathbb{Q}_2,\wedge^2V_2J)
\end{align*}
does not surject. This implies that the image of 
\begin{align*}
H^1_f\big(G_S,(\wedge^2V_2J)^*(1)\big)\rightarrow H^1_f\big(\mathbb{Q}_2,(\wedge^2V_2J)^*(1)\big)/H^1_e\big(\mathbb{Q}_2,(\wedge^2V_2J)^*(1)\big)
\end{align*}
is nontrivial. Indeed, this follows as the exact annihilators of $H^1_g(\mathbb{Q}_p,W)$ and $H^1_f(\mathbb{Q}_p,W)$, for a $p$-adic Galois representation $W$, under the cup product pairing
\begin{align*}
H^1(\mathbb{Q}_p,W)\times H^1(\mathbb{Q}_p,W^*(1))\rightarrow \mathbb{Q}_p,
\end{align*}
are $H^1_e(\mathbb{Q}_p,W^*(1))$ and $H^1_f(\mathbb{Q}_p,W^*(1))$, respectively (see \cite[Proposition 3.8]{bloch1990functions}). However, this nontrivial image contradicts Lemma \ref{QC2Lemma}.
\end{proof}
\begin{remark}\label{ConditionalDiffRemark}
Given Lemma \ref{ConditionalDiff} and Remark \ref{fDiffg}, we expect the bounds obtained using Theorems \ref{MainOddTheorem} and \ref{MainEvenThm} to, at best, be $g-1$ above the dimension predicted by the Bloch-Kato conjectures in \eqref{BKDim}.   
\end{remark}

\subsection{Case 1: Curves with exactly one RWP}
\subsubsection{Genus 2}
We first consider genus $2$ hyperelliptic curves given by the Weierstrass model \eqref{WeierstrassModel}, where $\overline{Q}(x)=x^2+x+1$ in $\mathbb{F}_2[x]$. We restrict our attention to this case as there are only four possible Galois groups for $f$ over $\mathbb{Q}_2$. Additionally, the vast majority of relevant genus $2$ hyperelliptic curves listed on the LMFDB \cite{lmfdb}, which have good ordinary reduction at $2$ and exactly one rational Weierstrass point, can be straightforwardly represented by such a model. 

As $f$ is of odd degree, it is necessary to modify the pairing of its roots in Notation \ref{RootsNotation}. Specifically, fix $\alpha_1$ as the $\mathbb{Q}_2$-rational root of $f$, which reduces to the rational root $\beta_1$ of $Q_{\beta}$ in \eqref{Q2ord}, and retain the usual notation for the remaining roots (i.e., the root $\alpha_{i,j}$ of $f$ reduces to the roots $\beta_i$ of $Q$ for $i\in\{2,3\}$, $j\in\{1,2\}$). It follows from basic Galois theory that the possible Galois groups for $f$ over $\mathbb{Q}_2$ are isomorphic to the following subgroups of $S_5$: (i) $\langle (24)(35) \rangle$, (ii) $\langle(24)(35),(23)(45)\rangle$, (iii) $\langle (2435)\rangle$ and (iv) $\langle (24)(35), (23) \rangle$. A straightforward computation then determines the $G_{{\mathbb{Q}}_2}$-orbits of the pairs of roots of $f$, and hence the factorisation of the defining polynomial $F$ of $\mathbb{Q}_{2,f}^{(2)}$ over $\mathbb{Q}_2$. We illustrate this procedure for the largest Galois group, case (iv), which is generic for the curves considered from the LMFDB; with obvious modifications, the other cases are treated analogously. 

\begin{example}\label{OddGenus2}
Consider the hyperelliptic curve $X$ with LMFDB label \href{https://www.lmfdb.org/Genus2Curve/Q/216663/a/216663/1}{216663.a.216663.1}, given by the Weierstrass model \eqref{WeierstrassModel}, where 
\begin{align*}
P(x)&=x^5 - 2x^4 + 3x^3 - 4x^2+x-1, \tab Q(x)=x^2 + x + 1.
\end{align*}
This curve has Mordell-Weil rank 2 and discriminant $3\cdot 72221$, so that it is possible to take $S=\{2\}$. Additionally, $\mathrm{Cl}(\mathbb{Q}_f^{(2)})=\mathrm{Cl}(\mathbb{Q}_f)=1$ for 
\begin{align*}
f(x)&=4x^5 - 7x^4 + 14x^3 - 13x^2 + 6x - 3,
\end{align*}
and therefore 
\begin{align}\label{sunits}
H^1(G_S,\wedge^2 J[2]) \simeq \mathrm{Ker}\big(\mathcal{O}_{\mathbb{Q}_f^{(2)}}[\tfrac{1}{2}]^\times \otimes \mathbb{F}_2 \rightarrow \mathcal{O}_{\mathbb{Q}_f}[\tfrac{1}{2}]^\times \otimes \mathbb{F}_2 \big),
\end{align}
by Lemma \ref{BK1Lemmas} and the exact sequence \eqref{unramifiedF2}. According to \texttt{Magma}, the dimension of \eqref{sunits} is $6$. It is readily verified that $\mathrm{Gal}(f)\simeq \langle (24)(35), (23) \rangle$, and in this case $F$ splits over $\mathbb{Q}_2$ as the product of a quadratic and two quartic factors. Explicitly, the quadratic factor corresponds to the pairs of roots of $f$ that reduce to the same root of $Q$, as expected, while the quartic factors arise from the following orbits of pairs of roots of $f$:
\begin{itemize}
    \item[(i)] $G_{\mathbb{Q}_2}\cdot \{\alpha_1,\alpha_{2,1}\} = \big\{\{\alpha_1,\alpha_{2,1}\},\{\alpha_1,\alpha_{2,2}\},\{\alpha_1,\alpha_{3,1}\},\{\alpha_1,\alpha_{3,2}\}\big\}$; 
    \item[(ii)] $G_{\mathbb{Q}_2}\cdot \{\alpha_{2,1},\alpha_{3,1}\}=\big\{\{\alpha_{2,1},\alpha_{3,1}\},\{\alpha_{2,1},\alpha_{3,2}\},\{\alpha_{2,2},\alpha_{3,1}\},\{\alpha_{2,2},\alpha_{3,2}\}\big\}$.
\end{itemize}
To distinguish between the two quartic factors in the \texttt{Magma} implementation, note that the factor corresponding to (i) will factor over $\mathbb{Q}_{2,Q}$ as the product of two irreducible quadratics, while the factor corresponding to (ii) will remain irreducible. For $K=\mathbb{Q}_2$, under the map
\begin{align*}
\wedge^2\mathrm{Ind}_{K}^{K_f}\mathbb{F}_2\rightarrow \wedge^2 \mathrm{Ind}_{K}^{K_{Q_{\beta}}}\mathbb{F}_2,
\end{align*}
the orbit in (i) is mapped to the $G_{\mathbb{Q}_2}$-orbit $\big\{\{\beta_1,\beta_2\},\{\beta_1,\beta_3\}\big\}$, which defines a degree $2$ unramified extension of $\mathbb{Q}_2$ upon taking Galois cohomology. The orbit in (ii) maps to $\big\{\{\beta_2,\beta_3\}\big\}$, defining a copy of $\mathbb{Q}_2$ passing to cohomology. In particular, this implies that 
\begin{align*}
\mathbb{Q}_2^{\mathrm{ord},\times}\otimes \mathbb{F}_2\simeq \mathbb{Q}_{2,Q}^\times \otimes \mathbb{F}_2 \oplus \mathbb{Q}_2^\times \otimes \mathbb{F}_2.
\end{align*}
Consequently, denoting the quartic factors corresponding to (i) and (ii) as $G$ and $H$ respectively, $G$ factors as the product of two quadratics over $\mathbb{Q}_{2,Q}$. We then conclude that $\theta_{\mathrm{dR}}$ is described via the following map:
\begin{align}\label{example1}
L^{(2),\times} \otimes \mathbb{F}_2 \simeq  (K_Q)_{G_1}^\times \otimes \mathbb{F}_2 \oplus K_{H}^\times \otimes \mathbb{F}_2 \xlongrightarrow{ \mathrm{Nm}} K_Q^\times \otimes \mathbb{F}_2\oplus K^\times \otimes \mathbb{F}_2,
\end{align}
where, to avoid awkward notation, we again denote $K=\mathbb{Q}_2$, and $G_1$ is either of the quadratic factors of $G$ over $K_Q$. To use this description, it remains to determine the kernel of the map, factoring through \eqref{example1}, from the obtained explicit basis of \eqref{sunits} to the quotient of $ K_Q^\times \otimes \mathbb{F}_2 \oplus K^\times \otimes \mathbb{F}_2$ by the appropriate unramified classes. \texttt{Magma} verifies that this kernel has dimension $5$, from which we can conclude that $X(\mathbb{Q}_2)_2$ is finite by Corollary \ref{OddFiniteCriterion}. Note that by Remark \ref{ConditionalDiffRemark}, an upper bound of $5$ is the optimal bound we expect from the methods of this paper. 
\end{example}

It is a straightforward computation to show that the remaining $3$ possibilities for the Galois group of $f$ over $\mathbb{Q}_2$ give $2$ additional factorisations of $F$: either (i) $F$ factors as the product of $3$ quadratic and one quartic factors, or (ii) as the product of $2$ linear and $4$ quadratic factors. The procedure for these cases is analogous to the example above. Moreover, it is computationally inexpensive to check this criterion for a given curve, allowing for the automation of this verification process for the appropriate genus $2$ curves on the LMFDB.
\begin{theorem}\label{ExampleComputation}
Of the $6,603$ genus $2$ curves with Mordell-Weil rank $2$ and exactly one rational Weierstrass point on the LMFDB, $1,138$ have good ordinary reduction at $2$, and at least $574$ of these satisfy $\#X(\mathbb{Q}_2)_2<\infty$.

\end{theorem}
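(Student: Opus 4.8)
The statement is fundamentally computational: it asserts the output of running the framework of Corollary \ref{OddFiniteCriterion} (equivalently, Theorem \ref{MainOddTheorem} combined with Corollary \ref{netancor}) over a finite list of curves pulled from the LMFDB \cite{lmfdb}. The plan is therefore to (1) extract the list of the $6{,}603$ genus $2$ curves of Mordell--Weil rank $2$ with exactly one rational Weierstrass point, (2) isolate those with good ordinary reduction at $2$, verifying there are $1{,}138$, and (3) for each such curve, run a \texttt{Magma} implementation of Algorithm \ref{Algo} together with the $\theta_v$ maps of \eqref{Thetas} to compute $\dim_{\mathbb{F}_2}\mathrm{Ker}(\theta_2\oplus\theta_\infty\oplus\theta_{\mathrm{dR}})$, and check the inequality of Corollary \ref{OddFiniteCriterion}, namely $\dim_{\mathbb{F}_2}\mathrm{Ker}(\theta_2\oplus\theta_\infty\oplus\theta_{\mathrm{dR}})<\tfrac12(3g^2+g+2)-\mathrm{rk}\,J(\mathbb{Q})=7-\mathrm{rk}\,J(\mathbb{Q})=5$.

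For step (2), a curve has good ordinary reduction at $2$ precisely when it admits a Weierstrass model \eqref{WeierstrassModel} with $\overline{Q}\in\mathbb{F}_2[x]$ separable of degree $g+1=3$; this is checkable directly (cf. \cite[Remark 2.2]{DOKCHITSER2023264}). For step (3), the key simplification, as noted in the discussion preceding Example \ref{OddGenus2}, is that good ordinary reduction forces the Galois group of $f$ over $\mathbb{Q}_2$ into a short list: when $\overline{Q}(x)=x^2+x+1$ the group is one of the four subgroups (i)--(iv) of $S_5$ listed there, and the remaining possibilities for $\overline{Q}$ (an irreducible cubic, or products of linear factors) are handled analogously, each producing one of only a handful of factorisation types of the defining polynomial $F$ of $\mathbb{Q}_{2,f}^{(2)}$. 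For each factorisation type one runs steps (1)--(5) of Algorithm \ref{Algo} to build $L^{(2)}$ and the norm map $L^{(2),\times}\otimes\mathbb{F}_2\to\mathbb{Q}_2^{\mathrm{ord},\times}\otimes\mathbb{F}_2$, then quotients the target by the unramified classes $\xi_i$ of Definition \ref{DefThetaDr} to obtain $\theta_{\mathrm{dR}}$; one computes a basis of $H^1(G_S,\wedge^2 J[2])$ via Lemma \ref{BK1Lemmas}(2) and the sequence \eqref{unramifiedF2} (needing $S$-units and class groups of $\mathbb{Q}_f^{(2)}$ and $\mathbb{Q}_f$, with $S=\{2\}$ whenever $X$ has semistable reduction away from $2$, enlarging $S$ otherwise); and one intersects with $\mathrm{Ker}(\theta_2)$ and $\mathrm{Ker}(\theta_\infty)$ using the explicit boundary maps of \cite[Proposition 7]{bk2}. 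Finally one reads off $\mathrm{rk}\,J(\mathbb{Q})$ (here $=2$ by hypothesis, so the bound to beat is $5$) and records whether $\dim_{\mathbb{F}_2}\mathrm{Ker}(\theta_2\oplus\theta_\infty\oplus\theta_{\mathrm{dR}})\le 4$.

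I expect the main obstacle to be a practical rather than a conceptual one: the number fields $\mathbb{Q}_f^{(2)}$ have degree $g(2g+1)$ — here $20$ for an odd model — and reliably computing bases of their $S$-unit groups and $2$-torsion of their class groups in \texttt{Magma} can be slow or, for a few curves, require the Generalized Riemann Hypothesis to terminate in reasonable time (as is standard for such class-group computations); this is presumably why the theorem is phrased as "at least $574$" rather than an exact count, since for some of the $1{,}138$ curves the $S$-unit computation either does not finish or yields a bound $\ge 5$. A secondary subtlety is bookkeeping: correctly matching the factors $F_i$ of $F$ over $\mathbb{Q}_2$ to the $G_{\mathbb{Q}_2}$-orbits of unordered pairs of roots of $f$ (step (2) of Algorithm \ref{Algo}) and, within those, distinguishing the pairs that reduce to the same root of $Q$ (which contribute trivially to $\theta_{\mathrm{dR}}$, by Lemma \ref{ExplicitMap}) from those that do not — in Example \ref{OddGenus2} this is done via the factorisation behaviour of $G$ and $H$ over $\mathbb{Q}_{2,Q}$, and the same heuristic has to be set up uniformly across all factorisation types. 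Once the implementation is validated against the worked examples (e.g. Example \ref{OddGenus2}, where it returns the predicted bound $5$, just above the conditional prediction of Remark \ref{ConditionalDiffRemark}), running it over the list and tabulating the successes yields the claimed figures.
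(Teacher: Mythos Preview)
Your overall plan is correct and matches the paper's approach: the theorem is a computational statement, and the paper establishes it precisely by running the framework of Algorithm \ref{Algo} (to build $\theta_{\mathrm{dR}}$) together with the explicit boundary maps $\theta_2,\theta_\infty$ of \cite{bk2} over the relevant LMFDB list, exactly as you describe. The paper does not give a formal proof environment for this theorem; the argument is the computation itself, validated against worked Example \ref{OddGenus2}.

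That said, there is a numerical slip in your plan that would materially change the count. For $g=2$ one has $\tfrac12(3g^2+g+2)=\tfrac12(12+2+2)=8$, not $7$; hence for $\mathrm{rk}\,J(\mathbb{Q})=2$ the inequality of Corollary \ref{OddFiniteCriterion} reads $\dim_{\mathbb{F}_2}\mathrm{Ker}(\theta_2\oplus\theta_\infty\oplus\theta_{\mathrm{dR}})<6$, i.e.\ $\le 5$, not $\le 4$. Example \ref{OddGenus2} confirms this: the computed kernel has dimension $5$ and finiteness is concluded. Your stricter threshold would discard exactly the curves that hit the optimal bound (cf.\ Remark \ref{ConditionalDiffRemark}), and these are numerous. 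A second, harmless slip: for an odd genus $2$ model $f$ has degree $5$, so $\mathbb{Q}_f^{(2)}$ has degree $\binom{5}{2}=10$ over $\mathbb{Q}$, not $20$; the $S$-unit computations are correspondingly less onerous than you suggest.
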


From the above curves that meet the criterion of Corollary \ref{OddFiniteCriterion}, the finiteness of $X(\mathbb{Q}_2)_2$ can be verified without the boundary map considerations of Section \ref{SSectionBoundary} for $487$ curves. An example of a rank $2$ curve for which finiteness of $X(\mathbb{Q}_2)_2$ cannot be deduced solely through determining $\mathrm{Ker}(\theta_{\mathrm{dR}})$ or using the boundary map considerations of \cite{bk2}, but can be verified by combining both methods, is the hyperelliptic curve with LMFDB label \href{https://www.lmfdb.org/Genus2Curve/Q/44071/a/44071/1}{44071.a.44071.1}, with Weierstrass model \eqref{WeierstrassModel} where 
\begin{align*}
P(x)=x^5-8x^3-8x^2+6x+7, \tab Q(x)=x^2+x+1.
\end{align*}

\begin{remark}
As mentioned above, one advantage of Corollary \ref{OddFiniteCriterion} over the explicit boundary map computations of \cite{bk2} is that computing the kernel of $\theta_2\oplus \theta_\infty$ is significantly more computationally expensive than computing the kernel of $\theta_{\mathrm{dR}}$, as defined in Theorem \ref{MainOddTheorem}. For instance, when using \texttt{Magma} to compute these conditions for the $1,138$ curves mentioned in Theorem \ref{ExampleComputation}, it took $46.638$ seconds per curve to check $\dim \mathrm{Ker}(\theta_2 \oplus \theta _\infty)<6$, whereas checking the corresponding condition for $\theta_{\mathrm{dR}}$ took an average of $2.020$ seconds per curve.
\end{remark}

\subsubsection{Genus 3}
In the genus $3$ case, it is no longer feasible to amass examples as in Theorem \ref{ExampleComputation} for genus 2. Although there is the aforementioned database of genus $3$ curves, it is too computationally expensive to check the criterion for every relevant curve. Indeed, the \'{e}tale algebra $\mathbb{Q}_f^{(2)}$ in this case has degree $21$ over $\mathbb{Q}$, and computing its (in the best case) $2$-units and $2$-class group is highly inefficient. Moreover, unlike the convenience afforded by the LMFDB for genus $2$ curves, this database does not provide the Mordell-Weil rank of the Jacobian of each curve, further increasing the computational time for finding appropriate examples. Nevertheless, it is not difficult to find examples within this database, and we include an example for both cases of interest. 

\begin{example}
Consider the hyperelliptic curve $X$, obtained from the aforementioned database of genus $3$ curves, given by the Weierstrass model \eqref{WeierstrassModel}, where 
\begin{align*}
P(x)=x^7 + 3x^6 - 3x^5 - 7x^4 + 4x^2 + 2x, \tab Q(x)=x^3 + x^2 + 1.
\end{align*}
The \texttt{Magma} function \texttt{RankBounds} verifies that the Mordell-Weil rank of the Jacobian of this curve is $3$. Furthermore, $X$ has semistable reduction away from $2$, and $\mathrm{Cl}(\mathbb{Q}_f^{(2)})=1$ for 
\begin{align*}
f(x)=4x^7 + 13x^6 - 10x^5 - 27x^4 + 2x^3 + 18x^2 + 8x + 1.
\end{align*} 
Therefore, we similarly obtain the description \eqref{sunits} as above, verifying that its dimension is $14$. As such, we cannot yet conclude finiteness of $X(\mathbb{Q}_2)_2$ as in Example \ref{FirstExample}, and need to proceed with computing $\mathrm{Ker}(\theta_{\mathrm{dR}})$. In contrast to Example \ref{OddGenus2}, the Galois group of $f$ over $\mathbb{Q}_2$ is not maximal: it is isomorphic to the subgroup of $S_7$ generated by $(23)(45)(67)$ and $(246)(357)$. Given that the $G_{\mathbb{Q}_2}$-orbits of $\{\beta_1,\beta_2\}$ and $\{\beta_2,\beta_3\}$ have order $3$, we obtain the isomorphism 
\begin{align}\label{q2ordex}
\mathbb{Q}_2^{\mathrm{ord},\times}\otimes \mathbb{F}_2\simeq (\mathbb{Q}_{2,Q}^\times\otimes \mathbb{F}_2)^{\oplus 2}.
\end{align}
It is readily checked that $F$ splits over $\mathbb{Q}_2$ as the product of a cubic and $3$ sextic factors. The cubic is as expected, and the sextic factors are determined by the $G_{\mathbb{Q}_2}$ orbits of $\{\alpha_1,\alpha_{2,1}\}$, $\{\alpha_{2,1},\alpha_{3,1}\}$ and $\{\alpha_{2,1},\alpha_{3,2}\}$, using analogous labeling of the roots as in Example \ref{OddGenus2}. Clearly, the first orbit maps to a $G_{\mathbb{Q}_2}$-set isomorphic to $\mathrm{Ind}_K^{K_Q}\mathbb{F}_2$ corresponding to a component of \eqref{q2ordex}; the second and third orbits both map to a set corresponding to the remaining component. Let $F=G\cdot F_1\cdot F_2 \cdot F_3$ over $\mathbb{Q}_2$, where $G$ is the cubic factor and $F_i$ denotes the sextic factor that corresponds to the $i^\mathrm{th}$ orbit above. Therefore, to construct $\theta_{\mathrm{dR}}$, we consider the following map:
\begin{align*}
(K_Q)_{F_{11}}^\times \otimes \mathbb{F}_2 \oplus \big((K_Q)_{F_{21}}^\times \otimes \mathbb{F}_2 \oplus (K_Q)_{F_{31}}^\times \otimes \mathbb{F}_2\big) \xrightarrow{\mathrm{Nm}} K_Q^\times \otimes \mathbb{F}_2 \oplus K_Q^\times \otimes \mathbb{F}_2,
\end{align*}
where $K=\mathbb{Q}_2$, and $F_{i,1}$ denotes any of the quadratic factors of $F_i$ over $K_Q$. To implement this map in \texttt{Magma}, it is necessary to distinguish the sextic factor $F_1$ from $F_2$ and $F_3$. One method is to fix a root of $f$ that is not in $\mathbb{Q}_2$ and compute which $F_i$ has a root whose difference from $\alpha$ is contained in $\mathbb{Q}_2$. As before, we can use this map and the obtained description of \eqref{sunits} to compute that the dimension of the kernel of $\theta_{\mathrm{dR}}$ is $11$. It follows from Corollary \ref{OddFiniteCriterion} that $X(\mathbb{Q}_2)_2$ is finite. 
\end{example}

\subsection{Case 2: Curves with no RWP}
\subsubsection{Genus 2}
Although there are genus $3$ examples, none of the genus $2$ hyperelliptic curves on the LMFDB with no RWP and good ordinary reduction at $2$ satisfy the criterion of Corollary \ref{EvenFiniteCriterion}. In contrast, \cite[Lemma 2.6]{QC2} showed that Lemma \ref{QC2Lemma} implies that finiteness of $X(\mathbb{Q}_2)_2$ follows if $\mathrm{rk}\hspace{0.8mm}J(\mathbb{Q})<g^2$, conditional on Conjecture \ref{BKConjectures}. As mentioned above, one possible explanation for this disparity could be explained by $2$-torsion in $H^1(G_S,\wedge^2T_2J_\mathfrak{m})$, although, as discussed in Remark \ref{NoTorsRemark}, proving the existence of such torsion cannot be deduced following the strategy in the odd degree case. In forthcoming work, we attempt to reconcile this disparity by instead considering explicit methods to effectively describe the difference in dimensions between the global Selmer groups, motivated by Remark \ref{ConditionalDiffRemark}. Nevertheless, the current methods do provide new bounds for the Bloch-Kato Selmer groups of interest; we illustrate this procedure with an example, before detailing some statistics on the bounds obtained for the curves with no RWP on the LMFDB. 

\begin{example}\label{Genus2noRWPExample}
Consider the hyperelliptic curve $X$ with LMFDB label \href{https://www.lmfdb.org/Genus2Curve/Q/10651/a/10651/1}{10651.a.10651.1}, given by the Weierstrass model \eqref{WeierstrassModel}, where 
\begin{align*}
P(x)=-x^5 - 2x^4 - x, \tab Q(x)=x^3+x+1.
\end{align*}
As above, $X$ has Mordell-Weil rank $2$ and semistable reduction away from $2$. However, in this example the $2$-class group of $\mathbb{Q}_f^{(2)}$ is isomorphic to $\mathbb{Z}/2\mathbb{Z}$. Nevertheless, we proceed as above and use \texttt{Magma} to compute a basis of
\begin{align*}
\mathrm{Ker}\Big(H^1\big(G_{\mathbb{Q}_f^{(2)},\{2\}},\mathbb{F}_2\big)\rightarrow H^1\big(G_{\mathbb{Q}_f,\{2\}},\mathbb{F}_2\big) \Big),
\end{align*}
via the sequence \eqref{unramifiedF2}, and verify it is of dimension $8$. The Galois group of $f$ over $\mathbb{Q}_2$ is isomorphic to the subgroup of $S_6$ generated by $(1,3,5)(2,4,6)$ and $(1,2)$, and $F$ factors as the product of a cubic and a degree $12$ factor. The latter factor arises from the $G_{\mathbb{Q}_2}$-orbit of the pairs of roots of $f$ which do not reduce to the same root of $Q$, and therefore $L^{(2)}$ is just a quartic field extension of $\mathbb{Q}_{2,Q}$. If $G_1$ denotes any of the quartic factors over $K_Q$ of the degree $12$ factor of $F$, then
\begin{align*}
L^{(2),\times}\otimes \mathbb{F}_2 \simeq (K_Q)_{G_1}^\times \otimes \mathbb{F}_2 \xrightarrow{\mathrm{Nm}} K_Q^\times \otimes \mathbb{F}_2. 
\end{align*}
As above, we can use this to show that the dimension of $\mathrm{Ker}(\theta_{\mathrm{dR}})$ is $7$. By Corollary \ref{EvenFiniteCriterion}, finiteness of $X(\mathbb{Q}_2)_2$ can only be concluded if this dimension if less than 7.  
\end{example}

\begin{remark}
This example is illustrative of the fact that the bounds obtained in this case are not far from producing a wide-reaching finiteness criteria for genus $2$ hyperelliptic curves with no RWP. To elaborate, suppose that, for computational ease, we consider only curves given by the Weierstrass model \eqref{WeierstrassModel} where $\overline{Q}(x)$ is either $x^3+x^2+1$ or $x^3+x+1$ in $\mathbb{F}_2[x]$. In these cases, there are three possible factorisations for the defining polynomial $F$ of $\mathbb{Q}_{2,f}^{(2)}$ over $\mathbb{Q}_2$: it factors as the product of either (i) five cubic factors; (ii) a cubic and two sextic factors; or (iii) a cubic and a degree 12 factor, as in Example \ref{Genus2noRWPExample}. Automating the procedure for computing $\mathrm{Ker}(\theta_{\mathrm{dR}})$, \texttt{Magma} verifies that of the $13,337$ genus $2$ curves with Mordell-Weil rank $2$ and no RWP on the LMFDB, $5,246$ have good ordinary reduction at $2$, and at least $1,492$ of these satisfy $\dim \mathrm{Ker}(\theta_{\mathrm{dR}})=7$. 
\end{remark}

\subsubsection{Genus 3}
As above, the genus $3$ case does not afford sufficient computational efficiency to check the finiteness criterion for a large number of curves. Regardless, the following example can be found on the aforementioned database of genus $3$ hyperelliptic curves. 
\begin{example}\label{LastExample}
Consider the genus $3$ hyperelliptic curve $X$, given by the Weierstrass model \eqref{WeierstrassModel}, where 
\begin{align*}
    P(x)=-4x^6 - 7x^5 + 4x^4 + 14x^3 + 5x^2 - 2x, \tab Q(x)=x^4 + x + 1.
\end{align*}
The relevant class groups are trivial, $X$ has semistable reduction away from $2$, and
\begin{align*}
\mathrm{Ker}\big(\mathcal{O}_{\mathbb{Q}_f^{(2)}}[\tfrac{1}{2}]^\times\otimes\mathbb{F}_2\rightarrow\mathcal{O}_{\mathbb{Q}_f}[\tfrac{1}{2}]^\times\otimes\mathbb{F}_2 \big)
\end{align*}
has dimension $15$. The \texttt{Magma} function \texttt{RankBounds} can only verify that the Mordell-Weil rank of its Jacobian is $3$ or $4$, however, this will be sufficient to deduce finiteness of $X(\mathbb{Q}_2)_2$. The Galois group of $f$ over $\mathbb{Q}_2$ is isomorphic to the subgroup of $S_8$ generated by $(1,3,5,7)(2,4,6,8)$ and $(1,2)$, so that $F$ factors as the product of a quartic, a degree $8$ factor and a degree $16$ factor over $\mathbb{Q}_2$. The degree $8$ factor arises from the orbit $G_{\mathbb{Q}_2}\cdot \{\alpha_{11},\alpha_{31}\}$; the degree $16$ factor from $G_{\mathbb{Q}_2}\cdot \{\alpha_{11},\alpha_{21}\}$. The component of $L^{(2)}$ corresponding to the first orbit (resp. the second orbit) maps to the degree $2$ (resp. degree $4$) unramified extension $K_2^{nr}$ (resp. $K_4^{nr}$) of $\mathbb{Q}_{2}$. Denote by $G$ and $H$ the degree $8$ and $16$ factors, respectively. Then we are interested in the map 
\begin{align*}
(K_2^{nr})_{G_1}^\times \otimes \mathbb{F}_2 \oplus (K_4^{nr})_{H_1}^\times \otimes \mathbb{F}_2 \xrightarrow{\mathrm{Nm}}K_2^{nr,\times}\otimes \mathbb{F}_2 \oplus K_4^{nr,\times}\otimes \mathbb{F}_2,
\end{align*}
where $K=\mathbb{Q}_2$ and $G_1$ (resp. $H_1$) denotes any of the factors of $G$ (resp. $H$) over $K_2^{nr}$ (resp. $K_4^{nr}$). Through this, \texttt{Magma} computed the dimension of the kernel of $\theta_{\mathrm{dR}}$ to be $13$, from which we can conclude that $X(\mathbb{Q}_2)_2$ is finite by Corollary \ref{EvenFiniteCriterion}. 
\end{example}

\bibliographystyle{alpha}
\bibliography{references}

\end{document}